\newtheorem{theorem}{Theorem}
\newtheorem{lemma}{Lemma}
\newcommand{\R}{{\mathbb R}}
\newcommand{\set}[2]{ \left\{ #1 \ \left| \ #2 \right. \right\} }
\newcommand{\ang}[1]{ \left< #1 \right>} 
\newcommand{\GL}{{\mathrm{GL}}}
\newtheorem*{theoremp}{Theorem 5}
\title{Uniform sublevel Radon-like inequalities}
\author{Philip T. Gressman}
\begin{document}
\maketitle

\begin{abstract}
This paper is concerned with establishing uniform weighted $L^p$-$L^q$ estimates for a class of operators generalizing both Radon-like operators and sublevel set operators.  Such estimates are shown to hold under general circumstances whenever a scalar inequality holds for certain associated measures (the inequality is of the sort studied by Oberlin \cite{oberlin2000II}, relating measures of parallelepipeds to powers of their Euclidean volumes).  These ideas lead to previously unknown, weighted affine-invariant estimates for Radon-like operators as well as new $L^p$-improving estimates for degenerate Radon-like operators with folding canonical relations which satisfy an additional curvature condition of Greenleaf and Seeger \cite{gs1994} for FIOs (building on the ideas of Sogge \cite{sogge1991} and Mockenhaupt, Seeger, and Sogge \cite{mss1993}); these new estimates fall outside the range of estimates which are known to hold in the generality of the FIO context.
\end{abstract}

This paper continues the study of sublevel set operators which began in \cite{gressman2010II}.  Specifically, these are operators of the form
\begin{equation} 
I_\epsilon(f_1,\ldots,f_m) := \int \chi_{|\rho(x_1,\ldots,x_m)| \leq \epsilon} \prod_{i=1}^m f_i(x_i) dx_1 \cdots dx_m. \label{mainobj}
\end{equation}
where $x_i \in U_i \subset \R^{d_i}$ and $\rho : U_1 \times \cdots \times U_m \rightarrow \R$ is real-valued and smooth.  Here the object is to establish boundedness on $L^{p_1} \times \cdots \times L^{p_m}$ along with a uniform estimate of the rate of decay of the norm as $\epsilon \rightarrow 0^+$.
The systematic study of sublevel set operators began roughly a decade ago with the works of Carbery, Christ, and Wright \cite{ccw1999}; Phong, Stein, and Sturm \cite{pss2001}; and Carbery and Wright \cite{cw2002}.  Aside from their usefulness in establishing estimates for multilinear oscillatory integrals, sublevel set operators were originally of interest because the analysis of such operators (specifically, the boundedness on products of $L^p$-spaces) appeared to be governed by the so-called Newton polytope of the phase function $\rho$.  It was demonstrated by Carbery \cite{carbery2009}, however, that the Newton polytope provides only an incomplete picture of the analysis of \eqref{mainobj}, and that there are geometrically-invariant, nonlinear expressions in $\rho$ and its derivatives which, when nonvanishing, imply additional estimates for \eqref{mainobj} which fall outside the convex hull of the estimates available in \cite{ccw1999}, \cite{pss2001}, and \cite{cw2002} (in the case of Carbery \cite{carbery2009}, the relevant quantity is the Hessian determinant of convex phases $\rho$; this and other quantities are given in \cite{gressman2010II} under the assumption that $\rho$ is polynomial or real analytic).  

The goal of this present paper is to demonstrate certain deep connections between operators of the form \eqref{mainobj}, Radon-like averaging operators, and a certain curvature condition of Oberlin which, in this context, may be interpreted as a multi-parameter scalar sublevel set inequality.  These connections lead to several new uniform weighted estimates for sublevel set operators and Radon-like averages over hypersurfaces.  Two of the most interesting examples are recorded below, phrased as isoperimetric inequalities reminiscent of the Loomis-Whitney inequality \cite{lw1949} and its generalizations \cite{bcw2005}.
\begin{theorem}
Suppose $d_l \leq d_r$.  Let $x \in \R^{d_l}$, $y \in \R^{d_r}$, and consider any $\rho$ which is a real polynomial in $x$ and $y$ and any $\varphi$ which is a coordinate-wise real polynomial mapping in $x$ and $y$ into $\R^{d_r - d_l}$.  \label{theorem1}
%Consider the set $E_{\epsilon,B}$ given by \label{theorem1}
%\[  E_{\epsilon,B} := \set{ (x,y) \in \R^{d_l} \times \R^{d_r}}{ |\rho(x,y)| \leq \epsilon \mbox{ and } \varphi(x,y) \in B}. \]
%for any $\epsilon > 0$ and any $B \subset \R^{d_r - d_l}$ which is a product of intervals.  
Let the Phong-Stein rotational curvature of the pair $\rho,\varphi$ be given by
\begin{equation} W^1_{x,y} := \det \left[ \begin{array}{ccccccc}
0 & \cdots & 0 & 0 & \frac{\partial \rho}{\partial x_1} & \cdots & \frac{\partial \rho}{\partial x_{d_l}} \\
\frac{ \partial \varphi_1}{\partial y_1} & \cdots & \frac{ \partial \varphi_{d_r-d_l}}{\partial y_1} & \frac{\partial \rho}{\partial y_1} & \frac{\partial^2 \rho}{\partial x_1 \partial y_1} & \cdots & \frac{\partial^2 \rho}{\partial x_{d_l} \partial y_1} \\
\vdots & \ddots & \vdots & \vdots & \vdots & \ddots & \vdots \\
\frac{ \partial \varphi_1}{\partial y_{d_r}} & \cdots & \frac{ \partial \varphi_{d_r-d_l}}{\partial y_{d_r}} & \frac{\partial \rho}{\partial y_{d_r}} & \frac{\partial^2 \rho}{\partial x_1 \partial y_{d_r}} & \cdots & \frac{\partial^2 \rho}{\partial x_{d_l} \partial y_{d_r}}
\end{array}
\right]. \label{radonrotcurv}
\end{equation}
%then
 Then there is a constant $C$ depending only on dimension and the degrees of $\rho$ and $\varphi$ such that
\begin{equation}
  \int_E   \left| W^1_{x,y} \right| ^{\frac{1}{d_l+1}} \left| f(x) g(y) \right| dx dy  \leq C |\rho(E)| |\varphi(E)|^{\frac{1}{d_l+1}} ||f||_{\frac{d_l+1}{d_l}} ||g||_{\frac{d_l+1}{d_l}} 
\end{equation}
for any closed set $E \subset \R^{d_l} \times \R^{d_r}$ and any measurable functions $f,g$ on $\R^{d_l}$ and $\R^{d_r}$, respectively.  Here $| \cdot |$ represents the Lebesgue measure when applied to sets.
In particular, if $\pi_L(x,y) := x$ and $\pi_R(x,y) := y$, then
\[ \int_E |W^1_{x,y}|^{\frac{1}{d_l+1}} dx dy \leq C |\rho(E)| |\varphi(E)|^{\frac{1}{d_l+1}} |\pi_L(E)|^{\frac{d_l}{d_l+1}} |\pi_R(E)|^{\frac{d_l}{d_l+1}} \]
for any closed $E \subset \R^{d_l} \times \R^{d_r}$.
\end{theorem}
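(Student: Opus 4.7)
My plan is to derive Theorem~\ref{theorem1} as a corollary of the general framework developed in the paper, whereby weighted $L^p$--$L^q$ bounds for operators of the type \eqref{mainobj} are reduced to a scalar Oberlin-type inequality for an associated measure. The Phong--Stein rotational curvature $|W^1|^{1/(d_l+1)}$ should be exactly the weight that makes that scalar inequality affine-invariant, and its verification will use the polynomial hypothesis on $\rho$ and $\varphi$.

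First, I would linearize the inequality via the layer-cake principle applied in both $\rho$ and $\varphi$. This reduces the claim to a uniform weighted sublevel-set bound of the form
\[ \int \chi_{|\rho| \leq \epsilon_1} \chi_{|\varphi - v| \leq \epsilon_2}\, |W^1_{x,y}|^{\frac{1}{d_l+1}} |f(x)g(y)|\, dx\,dy \leq C\, \epsilon_1 \epsilon_2^{d_r - d_l}\, \|f\|_{\frac{d_l+1}{d_l}} \|g\|_{\frac{d_l+1}{d_l}}, \]
uniform in $\epsilon_1 > 0$, $v \in \R^{d_r-d_l}$, and $\epsilon_2 > 0$. Integrating these parameters over $\rho(E)$ and $\varphi(E)$, and distributing the $\epsilon_2^{d_r-d_l}$ across the $|\varphi(E)|$ factors by H\"older, then produces the stated bound; the exponent $1/(d_l+1)$ on $|\varphi(E)|$ is precisely what the balance forces.

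Second, I would recognize the displayed integrand as a weighted sublevel-set operator \eqref{mainobj} with phase $(\rho,\varphi-v)$ and multilinear weight $|W^1|^{1/(d_l+1)}$. The main theorem of the paper should then convert the desired $L^p \times L^p \to L^1$ estimate into the corresponding scalar Oberlin condition on the measure $|W^1|^{1/(d_l+1)}\,dx\,dy$: for each axis-parallel parallelepiped $R \subset \R^{d_l}\times\R^{d_r}$,
\[ \int_R |W^1_{x,y}|^{\frac{1}{d_l+1}} \chi_{|\rho|\leq\epsilon_1}\chi_{|\varphi-v|\leq\epsilon_2}\, dx\,dy \leq C\, \epsilon_1 \epsilon_2^{d_r-d_l}\, |R|^{\alpha} \]
with $\alpha$ dictated by the Lebesgue scaling of $L^{(d_l+1)/d_l}$. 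The ``in particular'' form follows at once from the main statement by setting $f = \chi_{\pi_L(E)}$, $g = \chi_{\pi_R(E)}$.

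The third and hardest step is verifying that scalar Oberlin inequality, and this is where the polynomial hypothesis enters essentially: $W^1$ is itself a polynomial in $(x,y)$ of degree bounded by the degrees of $\rho$ and $\varphi$, so Lojasiewicz/Bezout-type arguments for polynomial sublevel sets (of the sort used in \cite{gressman2010II}) yield the required uniform bound with constants depending only on dimension and total degree. The main obstacle I foresee is the delicate cofactor bookkeeping for $W^1$, which mixes first and second derivatives of $\rho$ together with first derivatives of $\varphi$: to capture how much of $|W^1|^{1/(d_l+1)}$ concentrates inside a thin slab transverse to $\{|\rho| \leq \epsilon_1\}$ one must expand along the top row and exploit the fact that entries involving $\partial_{x_i}\rho$ control fluctuation of $\rho$ along coordinate directions, while the remaining minor encodes the map $y\mapsto(\varphi,\rho,\nabla_x\rho)$ whose Jacobian controls fluctuation of $(\varphi,\rho)$. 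Once this combinatorial-polynomial step is carried out, the general machinery of the paper converts the scalar bound automatically into the stated weighted Radon-like inequality.
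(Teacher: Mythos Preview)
Your high-level instinct---reduce to an Oberlin-type scalar condition and then invoke the paper's general machinery---is correct, but the specific Oberlin condition you propose is not the one the paper uses, and your verification plan for it does not locate the actual mechanism.

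The condition \eqref{geomassump} in Theorem~\ref{geomthm} is \emph{not} a bound on $\int_R |W^1|^{1/(d_l+1)}$ over axis-parallel parallelepipeds $R\subset\R^{d_l}\times\R^{d_r}$. It is, for each fixed $x$, a bound on the weighted measure $|w(x,y)|\,dy$ of preimages under $y\mapsto \partial_x\rho(x,y)$ of centered ellipsoids in $\R^{d_l}$:
\[
\sup_x\ \sup_{Q\in\GL(d_l,\R)} |Q|^{s/d_l}\int_{U_x}\frac{|w(x,y)|\chi_E(x,y)}{|Q\,\partial_x\rho(x,y)|^{s}}\,dy \leq K_E|\rho(E)|.
\]
This is a one-parameter family (in $x$) of conditions on $\R^{d_r}$, not a product-space condition, and the paper's proof of Theorem~\ref{geomthm} is a multilinear determinant argument (Lemma~1 plus the change-of-variables identity \eqref{upper}), not a layer-cake decomposition. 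Your step~1 and step~2 are therefore aimed at the wrong target.

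More importantly, your step~3 misses the point entirely. The verification of \eqref{geomassump} for the Phong--Stein weight is \emph{not} a generic polynomial sublevel-set / Lojasiewicz argument; it is Lemma~\ref{codim1lemma}, which is a one-line change of variables. Setting $\Phi:=\partial_x\rho$ and augmenting $\varphi$ by $\rho$, the weight $W^1_{x,y}=\ang{D_1\Phi,\Phi}$ is \emph{literally the Jacobian} of the map $(\lambda,y)\mapsto(\varphi(y),\rho(y),\lambda\,\partial_x\rho(x,y))$, so \eqref{upper} gives the geometric inequality \eqref{boxest1} directly; the only role of the polynomial hypothesis is to bound the nondegenerate multiplicity of that map via Bezout (this is the ``topological constant'' $N_L$). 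The ``cofactor bookkeeping'' you anticipate is unnecessary: the determinant $W^1$ was engineered to be that Jacobian. Once Lemma~\ref{codim1lemma} verifies \eqref{geomassump} with $s=d_l$, Theorem~\ref{geomthm} (with $p'=\tfrac{d_l+1}{d_l}$) gives Theorem~\ref{theorem1} immediately; see the first paragraph of Section~\ref{finalsec}.
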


\begin{theorem}
Suppose $d_l \leq d_r+1$ and let $x,y$ be as in theorem \ref{theorem1}.  Let $\rho$ be a polynomial in $x$ and $y$ and let $\varphi$ be a coordinate-wise polynomial mapping in $x$ and $y$ into $\R^{d_r - d_l + 1}$.  Let $M$ be the $d_r \times d_r$ matrix whose $(i,j)$-entry is given by \label{theorem2}
\[ M_{ij} := \det \left[ \begin{array}{ccccccc}
0 & \cdots & 0 & 0 & \frac{\partial \rho}{\partial x_1} & \cdots & \frac{\partial \rho}{\partial x_{d_l}} \\
0 & \cdots & 0 & 0 & \frac{\partial^3 \rho}{\partial x_1 \partial y_i \partial y_j } & \cdots & \frac{\partial^3 \rho}{\partial x_{d_l} \partial y_i \partial y_j} \\
\frac{ \partial \varphi_1}{\partial y_1} & \cdots & \frac{ \partial \varphi_{d_r-d_l+1}}{\partial y_1} & \frac{\partial \rho}{\partial y_1} & \frac{\partial^2 \rho}{\partial x_1 \partial y_1} & \cdots & \frac{\partial^2 \rho}{\partial x_{d_l} \partial y_1} \\
\vdots & \ddots & \vdots & \vdots & \vdots & \ddots & \vdots \\
\frac{ \partial \varphi_1}{\partial y_{d_r}} & \cdots & \frac{ \partial \varphi_{d_r-d_l+1}}{\partial y_{d_r}} & \frac{\partial \rho}{\partial y_{d_r}} & \frac{\partial^2 \rho}{\partial x_1 \partial y_{d_r}} & \cdots & \frac{\partial^2 \rho}{\partial x_{d_l} \partial y_{d_r}}
\end{array}
\right], \]
and let the weight $W^2_{x,y}$ be given as a block-form determinant by the formula
\begin{equation} W^2_{x,y} := \det \left[ \begin{array}{ccc} 0 & 0 & \partial_y \varphi \\
0 & 0 & \partial_y \rho \\
(\partial_y \varphi)^T & (\partial_y \rho)^T & M \end{array} \right] \label{radonrotcurv2}
\end{equation}
where $\partial_y \varphi$ is the $(d_r - d_l + 1) \times d_r$ matrix whose $(i,j)$-entry is $\frac{\partial \varphi_i}{\partial y_j}$ and so on. Then
\begin{equation}
  \int_{E}  \left| W^2_{x,y} \right| ^{\frac{1}{d_l(d_l-1)}} \left| f(x) g(y) \right| dx dy  \leq C |\rho(E)| |\varphi(E)|^{\frac{1}{d_l-1}} ||f||_{\frac{d_l^2 - d_l }{d_l^2 - 2 d_l + 2}} ||g||_{\frac{d_l-1}{d_l-2}} 
\end{equation}
holds uniformly for all measurable $f,g$ and closed $E$ with a constant $C$ depending only on the degrees of the polynomials and the dimensions $d_l,d_r$.
In particular, 
\[ \int_E |W^2_{x,y}|^{\frac{1}{d_l(d_l-1)}} dx dy \leq C |\rho(E)| |\varphi(E)|^{\frac{1}{d_l-1}} |\pi_L(E)|^{1- \frac{1}{d_l}\frac{d_l-2}{d_l-1}} |\pi_R(E)|^{\frac{d_l-2}{d_l-1}} \]
for any closed $E \subset \R^{d_l} \times \R^{d_r}$.
\end{theorem}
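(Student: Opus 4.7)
The plan is to derive Theorem \ref{theorem2} as an application of the general reduction theorem sketched in the abstract (presumably Theorem~5 of the paper), which converts weighted $L^p\times L^q$ bounds for operators of the form \eqref{mainobj} into the verification of an associated scalar Oberlin-type inequality: a bound on the $W$-weighted measure of an arbitrary parallelepiped $P \subset \R^{d_l}\times\R^{d_r}$ by an appropriate power of its Euclidean volume, possibly with additional $|\rho(P)|$ and $|\varphi(P)|$ factors. With this framework in place, the task is to verify the specific parallelepiped inequality corresponding to the weight $|W^2|^{1/(d_l(d_l-1))}$ and the Lebesgue exponents written in the statement.

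Before tackling the algebra, I would pin down the inequality to be proved by a scaling argument: under independent dilations of $x$, $y$, $\rho$, and $\varphi$, the four exponents $\frac{d_l^2-d_l}{d_l^2-2d_l+2}$, $\frac{d_l-1}{d_l-2}$, $\frac{1}{d_l(d_l-1)}$, and $\frac{1}{d_l-1}$ must balance in exactly one way. Carrying out this check both fixes the form of the scalar Oberlin inequality we must prove and explains the otherwise opaque shape of the exponents. The ``In particular'' assertion at the end of the theorem is then a formal corollary: setting $f = \chi_{\pi_L(E)}$ and $g = \chi_{\pi_R(E)}$ and using $\|\chi_A\|_p = |A|^{1/p}$ immediately yields the isoperimetric display.

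The heart of the argument, and the main obstacle, is verifying the scalar parallelepiped inequality for $|W^2|^{1/(d_l(d_l-1))}$. The block structure in \eqref{radonrotcurv2} is essential: expanding by Laplace along the first two blocks of rows writes $W^2$ as a signed sum of products of minors of $\partial_y \varphi$ and $\partial_y \rho$ with entries $M_{ij}$, each of which is itself a Phong-Stein rotational-curvature-type determinant \emph{with one pair of extra $y$-derivatives inserted}. This third-order content is the precise analytic incarnation of Greenleaf-Seeger's folding condition \cite{gs1994} and is what allows $W^2$ to control the operator in exactly those directions in which $W^1$ degenerates. To bound $|W^2|^{1/(d_l(d_l-1))}$ on a parallelepiped $P$, I would use a Bezout-type argument on polynomials of bounded degree restricted to the edge directions of $P$, following the template of \cite{ccw1999} and \cite{gressman2010II}. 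The delicate step is showing that the effective polynomial degree of $W^2$ in the edge lengths of $P$ equals exactly $d_l(d_l-1)$, decomposed as $d_l$ from the transverse first-order $x$-block and $d_l-1$ from the ``folded'' second-order $y$-block of $M_{ij}$, with no hidden cancellation between blocks that would collapse the degree and force a worse exponent. Once this degree count is verified and fed into the Bezout-type sublevel bound, the scalar Oberlin inequality follows, and the framework theorem completes the proof.
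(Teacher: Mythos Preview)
Your high-level outline is right in one respect: the paper does reduce Theorem~\ref{theorem2} to Theorem~\ref{geomthm} (Theorem~5), so the ``In particular'' display is indeed obtained by testing on $f=\chi_{\pi_L(E)}$, $g=\chi_{\pi_R(E)}$. But the core of your proposal misidentifies both the form of the Oberlin condition and the mechanism by which it is verified.

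First, the scalar condition you must check is not a bound on the $W^2$-weighted measure of parallelepipeds $P\subset\R^{d_l}\times\R^{d_r}$. The hypothesis \eqref{geomassump} of Theorem~\ref{geomthm} fixes $x$ as a parameter and asks for a uniform bound on
\[
\sup_{Q\in\GL(d_l,\R)} |Q|^{\frac{s}{d_l}}\int_{U_x}\frac{|w(x,y)|\,\chi_E(x,y)}{|Q\,\partial_x\rho(x,y)|^{s}}\,dy,
\]
i.e.\ an ellipsoid condition in $\R^{d_l}$ for the pushforward of $dy$ under $y\mapsto\partial_x\rho(x,y)$, with $s=d_l-2$ here. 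This is the content of Lemma~\ref{codim2lemma} (inequality \eqref{boxest2}) applied with $\Phi:=\partial_x\rho$ and the cutoff $(\rho,\varphi)$; once that lemma is in hand, Theorem~\ref{geomthm} immediately gives Theorem~\ref{theorem2}. Your parallelepiped-in-the-product-space picture does not match this, and a Bezout sublevel bound on such $P$ would not feed into Theorem~\ref{geomthm}.

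Second, and more seriously, the paper does \emph{not} verify \eqref{boxest2} by any direct degree count on $W^2$. The proof of Lemma~\ref{codim2lemma} runs the inductive machine of Theorem~\ref{gentheorem}: one starts from the codimension-one inequality \eqref{boxest1} (which \emph{is} a one-step change of variables), passes via Lemma~\ref{indhalf1} to an $L^p$ convolution estimate, converts back to a geometric inequality one dimension higher via Lemma~\ref{srlemma}, and then identifies the resulting weight \eqref{newfunc} with the explicit formula \eqref{weight2pt1}--\eqref{weight2pt2} through the determinantal reduction \eqref{detred}. The ``degree $d_l(d_l-1)$'' you detect is a shadow of this two-step induction, not something one can read off from a Bezout count on edge lengths; in particular, your proposed step of ruling out ``hidden cancellation between blocks'' has no clear mechanism, and nothing in the polynomial sublevel literature you cite produces the mixed exponent $|W^2|^{1/n}/|Q\Phi|^{n-2}$ in \eqref{boxest2} directly.
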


It is worth noting that theorems \ref{theorem1} and \ref{theorem2} are both affine invariant, and, as such, fall in the sharpest possible class of estimates which hold for degenerate geometric averaging operators.  Affine-invariant estimates have an extensive history in the study of the Fourier restriction problem going back to Drury \cite{drury1990}; the reader is advised to see the introduction of \cite{dlw2009} for further references.  
In the case of averaging operators, however, uniform, affine-invariant estimates have previously been available only in a few cases.  For averages over curves, see the work of Oberlin \cite{oberlin1999II}, and Dendrinos, Laghi, and Wright \cite{dlw2009}.  When the average is given by convolution with an affine surface measure, see Oberlin \cite{oberlin2000II}.  In this latter work, Oberlin demonstrates that affine surface measure is essentially the largest possible weight which guarantees the standard $L^{\frac{n+1}{n}} \rightarrow L^{n+1}$ estimate holds (meaning that any other measure of smooth density yielding this inequality will be dominated by a constant times the affine surface measure).  In this context, theorem \ref{theorem2} demonstrates that, while the $L^{\frac{n+1}{n}} \rightarrow L^{n+1}$ is necessarily tied to a specific, affine-invariant weight, there are nevertheless other estimates which are connected to entirely different affine-invariant weights.

\subsection{Applications to averages over hypersurfaces}

The weight \eqref{radonrotcurv} is called the Phong-Stein rotational curvature because it corresponds exactly to the usual definition when $d_l = d_r$.  This is no coincidence; the uniform sublevel set estimates in theorems \ref{theorem1} and \ref{theorem2} have immediate implications for Radon-like operators.  % , as will be explained now.
Let $U \subset \R^{d_l} \times \R^{d_r}$ be open and let $\rho : U \rightarrow \R$ be a polynomial.  One may consider the hypersurface ${\mathcal M} := \set{(x,y) \in U}{ \rho(x,y) = 0}$ to be the incidence relation of a Radon-like transform $R$ which averages functions on $\R^{d_r}$ over hypersurfaces; specifically, if one defines
\[ \Sigma_x^{R,c} := \set{ y \in \R^{d_r} }{ (x,y) \in U, \ \rho(x,y) = c, \ \partial_y \rho(x,y) \neq 0 }\]
%\[ \Sigma_x := \set{ y \in \R^{d_r}}{ (x,y) \in U, \ \rho(x,y) = 0 }, \]
and considers the Radon-like operator $R^c_E$ given by
\begin{equation} R^c g(x) := \int_{\Sigma_x^{R,c}} g(y) \chi_E(x,y) \frac{ d {\mathcal H}^{d_r-1}(y)}{|\partial_y \rho(x,y)|} \label{radondef}
\end{equation}
(where the measure is the $(d_r-1)$-dimensional Hausdorff measure and $\partial_y \rho$ is the Euclidean gradient of $\rho$ with respect to $y$ only), then the coarea formula (see Federer \cite{federer1969}) dictates that
\[ \int_{-\epsilon}^{\epsilon} \int f(x) R^c g(x) dx dc = \int_{|\rho(x,y)| \leq \epsilon} \chi_E(x,y) f(x) g(y) dx dy. \]
In particular, when $f$ and $g$ are suitably regular and nonnegative, it is easy to check that
\[ \int f(x) R^0 g(x) dx \leq  \liminf_{\epsilon \rightarrow 0^+} \frac{1}{2 \epsilon} \int_{|\rho(x,y)| \leq \epsilon} \chi_E(x,y) f(x) g(y) dx dy \]
(the interested reader is referred to the appendix for more details).
Thus, theorems \ref{theorem1} and \ref{theorem2} have the following consequences: %immediate analogues for Radon-like operators:
\begin{theorem}
Under the same hypotheses as theorems \ref{theorem1} and \ref{theorem2}, the operators
\begin{align*}
R^0_{W^1} g(x) & :=  \int_{\Sigma_x^{R,0}} g(y) \chi_E(x,y) |W^1_{x,y}|^{\frac{1}{d_l+1}} \frac{ d {\mathcal H}^{d_r-1}(y)}{|\partial_y \rho(x,y)|}, \\
R^0_{W^2} g(x) & :=  \int_{\Sigma_x^{R,0}} g(y) \chi_E(x,y) |W^2_{x,y}|^{\frac{1}{d_l(d_l-1)}} \frac{ d {\mathcal H}^{d_r-1}(y)}{|\partial_y \rho(x,y)|},
\end{align*}
satisfy the inequalities
\begin{align}
 || R^0_{W^1} g ||_{d_l+1} & \leq C |\varphi(E)|^{\frac{1}{d_l+1}} ||g||_{\frac{d_l+1}{d_l}} \label{rad1}, \\
  || R^0_{W^2} g ||_{\frac{d_l(d_l-1)}{d_l-2}} & \leq C |\varphi(E)|^{\frac{1}{d_l-1}} ||g||_{\frac{d_l-1}{d_l-2}}, \label{rad2}
\end{align}
where for \eqref{rad1}, the constant $C$ is exactly as described in theorem \ref{theorem1} and likewise for \eqref{rad2} and theorem \ref{theorem2}.  Recall that $\varphi$ maps into $\R^{d_r-d_l}$ in the former case and into $\R^{d_r-d_l - 1}$ in the latter.
 \end{theorem}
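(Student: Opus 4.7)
The plan is to derive the Radon-like bounds directly from the sublevel inequalities of Theorems~\ref{theorem1} and~\ref{theorem2} by coarea disintegration followed by $L^p$ duality, exactly as sketched in the discussion surrounding \eqref{radondef}. The key observation is that inserting the weight $|W^i_{x,y}|^{1/(\cdot)}$ into the bilinear sublevel estimate produces exactly the hypersurface integrand appearing in the definition of $R^0_{W^i}$ once Lebesgue measure is disintegrated against the level sets of $\rho$.

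For \eqref{rad1}, I would fix nonnegative $f,g$ and apply Theorem~\ref{theorem1} with $E$ replaced by $E_\epsilon := E \cap \{(x,y) : |\rho(x,y)| \leq \epsilon\}$. Since $\rho(E_\epsilon) \subseteq [-\epsilon,\epsilon]$ and $\varphi(E_\epsilon) \subseteq \varphi(E)$, the right-hand side of the sublevel inequality becomes $2\epsilon\, C |\varphi(E)|^{1/(d_l+1)} \|f\|_{(d_l+1)/d_l} \|g\|_{(d_l+1)/d_l}$. The coarea formula rewrites the left-hand side as
\[ \int_{-\epsilon}^{\epsilon} \int f(x)\, R^c_{W^1} g(x) \, dx \, dc, \]
where $R^c_{W^1}$ denotes the obvious level-$c$ analogue of $R^0_{W^1}$. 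Dividing by $2\epsilon$ and taking $\liminf$ as $\epsilon \to 0^+$ (via the Lebesgue differentiation argument the author defers to the appendix) yields
\[ \int f(x)\, R^0_{W^1} g(x)\, dx \leq C |\varphi(E)|^{1/(d_l+1)} \|f\|_{(d_l+1)/d_l} \|g\|_{(d_l+1)/d_l}. \]
Taking the supremum over nonnegative $f$ with $\|f\|_{(d_l+1)/d_l} \leq 1$ and invoking the duality $(L^{(d_l+1)/d_l})^* = L^{d_l+1}$ produces \eqref{rad1}. Inequality \eqref{rad2} follows by the identical argument applied to Theorem~\ref{theorem2}; the only arithmetic is verifying that the Hölder conjugate of $(d_l^2 - d_l)/(d_l^2 - 2 d_l + 2)$ equals $d_l(d_l-1)/(d_l-2)$, which is direct.

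The main potential obstacle is justifying the liminf inequality used to pass from the sublevel estimate back to the Radon integral at $c=0$. This is the weighted analogue of the passage already flagged by the author for appendix treatment, and since the weight $|W^i_{x,y}|^{1/(\cdot)}$ is a fixed nonnegative polynomial-power function (hence locally integrable on any compact piece of the domain), the same Fatou-type reasoning, or approximation of $f,g$ by continuous functions followed by Lebesgue differentiation at regular points, carries through without modification. Once this technical point is handled, no further geometric input beyond Theorems~\ref{theorem1} and~\ref{theorem2} is required; everything else is routine bookkeeping of Hölder exponents and $L^p$ duality.
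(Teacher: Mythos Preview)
Your proposal is correct and follows exactly the route the paper takes: the theorem is presented there as an immediate consequence of Theorems~\ref{theorem1} and~\ref{theorem2} via the coarea identity, the liminf passage (deferred to the appendix), and $L^p$ duality. You have simply written out the details the paper leaves implicit, and your handling of the exponents and the technical liminf step matches the paper's treatment.
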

In this context, the weights $W^1_{x,y}$ and $W^2_{x,y}$ have familiar geometric interpretations.
When $R^c$ is realized as a Fourier integral operator of order $-\frac{d-1}{2}$ (see Guillemin and Sternberg \cite{gs1977} or Phong and Stein \cite{ps1986I} for the calculations), its canonical relation ${\mathscr C} \subset T^* \R^d \setminus 0 \times T^* \R^d \setminus 0$ is given by 
\[ {\mathscr C}^c := \set{ ((x, \lambda \partial_x \rho(x,y)),(y,-\lambda \partial_y \rho(x,y)))}{(x,y) \in U, \ \lambda \in \R \setminus 0, \ \rho(x,y) = c}; \]
in this case, the nonvanishing of the weight $W^1_{x,y}$ corresponds to the situation in which ${\mathscr C}^c$ is locally the graph of a canonical transformation. 
The idea behind \eqref{rad1}, namely, damping by the rotational curvature, is certainly not new.  Aside from the work on the Fourier restriction problem (beginning with Drury \cite{drury1990}), Phong and Stein \cite{ps1994}, \cite{ps1998}, prove damped oscillatory integral estimates which may be used to establish $L^2$-Sobolev estimates in the semi-translation-invariant case.  Oberlin \cite{oberlin2000II} also proves a version of \eqref{rad1} when $g$ is a characteristic function and $R^0_{W^1}$ is translation-invariant.  Related ideas have appeared in the work Sogge and Stein \cite{ss1990}, Christ \cite{christ1984}, and many others.
Along these same lines, there have been many works to explicitly establish $L^p$-improving estimates for Radon-like operators in various circumstances, including Littman \cite{littman1971}; Oberlin and Stein \cite{os1982}; Oberlin \cite{oberlin1987}, \cite{oberlin1997}; Phong and Stein \cite{ps1991}; Greenleaf and Seeger \cite{gs1994}, \cite{gs1998}; 
Iosevich and Sawyer \cite{is1996}
Greenleaf, Seeger, and Wainger \cite{gsw1998}, \cite{gsw1999}; Seeger \cite{seeger1998}; 
Christ \cite{christ1998}; Bak \cite{bak2000}; Bak, Oberlin, and Seeger \cite{bos2002}; Tao and Wright \cite{tw2003}; Lee \cite{lee2003}, \cite{lee2004}; and Yang \cite{yang2005}.  This listing is not exhaustive and does not include the (even larger) body of work devoted to $L^2$- and $L^p$-Sobolev estimates, which would indirectly imply (usually non-sharp) $L^p$-improving estimates as well.
Even so, it appears that the present paper is the first place in which the well-known estimate \eqref{rad1} is established uniformly for a broad class of functions $\rho$ with strong control of the constant.

%% Drury, Oberlin, Christ (k-plane transform and determinant stuff)

%% Optimal regularity in Sobolev spaces for FIOs with one-sided folds Comech \cite{comech1999}

%% averages over hypersurface regularity Sogge and Stein \cite{ss1990}

%% \cite{drury1990} First to use damping (in case of curves).

%% \cite{ps1994} \cite{ps1998} damped oscillatory integral version.  The former also implicity establishes damped $L^2$-Sobolev estimates for semi-translation invariant operators in $\R^2$.  It's not clear though if anybody's ever proved a general non-uniform $L^p$-$L^q$ result in this context...

%% Oberlin \cite{oberlin2000II} establishes a uniform restricted weak-type version of \eqref{rad1} when $R^0$ is given by convolution with a surface measure.

 Likewise, it is relatively easy to see that the nonvanishing of $W^2_{x,y}$ corresponds to the situation where the hypersurfaces 
\[ {\mathscr L}^b := \set{ ((x,\xi),(y,\eta)) \in {\mathscr C}^c}{ \varphi(x,y) = b} \]
satisfy the property that, for fixed $x$, the projection of ${\mathscr L}^b$ onto $T_x^* \R^d$ is locally a hypersurface and has $d-2$ nonvanishing principal curvatures (the maximum number possible since the hypersurface is homogeneous).  In particular, if $W^1_{x,y} = 0$ at some point and one chooses $\varphi := W^1_{x,y}$, then the nonvanishing of $W^2_{x,y}$ indicates the presence one-sided Whitney fold for which the fold hypersurface (coinciding with ${\mathscr L}^0$ in this case) satisfies a maximal curvature condition of exactly the sort studied by Greenleaf and Seeger \cite{gs1994}, \cite{gs1998}; this curvature condition is a natural outgrowth of work by Sogge \cite{sogge1991} and Mockenhaupt, Seeger, and Sogge \cite{mss1993} on the local smoothing problem (which were, in turn, inspired by the curvature conditions appearing in the work of Carleson and Sj\"{o}lin \cite{cs1972}).  In the present case, for the particular choice $\varphi := W^1_{x,y}$ when $d_l = d_r = d$, theorem 1.3 of \cite{gs1994} implies that $R^0$ locally maps $L^2$ to $L^{2 d}$ near any pair $(x,y)$ where $W^2_{x,y} \neq 0$.  By virtue of theorems \ref{theorem1} and \ref{theorem2} and Bourgain's trick, the following immediate improvement is possible:
\begin{theorem}
If the weight $W^2_{x,y}$, given by \eqref{radonrotcurv2} taking $\varphi := W^1_{x,y}$, satisfies $W^2_{x,y} \geq \beta$ for all $(x,y) \in U$ then \label{greenleafseeger}
\[ || R^0 g||_{\frac{d^2}{d-1},\infty} \leq C \beta^{-\frac{1}{2 d^2}} || g||_{\frac{d}{d-1},1} \]
with the constant $C$ again depending only on the degree of $\rho$ and the dimension (the spaces are the usual Lorentz spaces).
\end{theorem}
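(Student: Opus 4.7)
The plan is to apply Bourgain's interpolation trick to the two Radon-like estimates \eqref{rad1} and \eqref{rad2} of the preceding theorem, specialized to $\varphi := W^1_{x,y}$. Because $d_l = d_r = d$, the map $\varphi$ in Theorem \ref{theorem1} takes values in $\R^{0}$ and drops out, while in Theorem \ref{theorem2} it takes values in $\R$, so the scalar polynomial $W^1_{x,y}$ is admissible. For each threshold $\lambda > 0$ I would decompose
\[ R^0 = R^{0, > \lambda} + R^{0, \leq \lambda} \]
by inserting the cutoffs $\chi_{\{|W^1| > \lambda\}}$ and $\chi_{\{|W^1| \leq \lambda\}}$, respectively, into the kernel of $R^0$.

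On the first piece, $|W^1| > \lambda$ pointwise gives $R^{0, > \lambda} g \leq \lambda^{-1/(d+1)} R^0_{W^1} g$ with $E = \{|W^1| > \lambda\}$, so \eqref{rad1} yields
\[ \|R^{0, > \lambda} g\|_{d+1} \leq C \lambda^{-1/(d+1)} \|g\|_{(d+1)/d}. \]
On the second piece, the hypothesis $W^2_{x,y} \geq \beta$ gives $R^{0, \leq \lambda} g \leq \beta^{-1/(d(d-1))} R^0_{W^2} g$ with $E = \{|W^1| \leq \lambda\}$; and since $\varphi(E) \subseteq [-\lambda, \lambda]$, \eqref{rad2} supplies
\[ \|R^{0, \leq \lambda} g\|_{d(d-1)/(d-2)} \leq C \beta^{-1/(d(d-1))} \lambda^{1/(d-1)} \|g\|_{(d-1)/(d-2)}. \]

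To extract the Lorentz-space bound it suffices to establish the restricted weak-type inequality for $g = \chi_F$. Applying Chebyshev at height $t/2$ to each piece and adding produces
\[ |\{R^0 \chi_F > t\}| \leq C\left( t^{-(d+1)} \lambda^{-1} + \beta^{-1/(d-2)} \lambda^{d/(d-2)} t^{-d(d-1)/(d-2)} \right) |F|^d. \]
Balancing the two terms selects $\lambda \sim \beta^{1/(2(d-1))} t^{1/(d-1)}$, which after substitution collapses the right-hand side to
\[ t \, |\{R^0 \chi_F > t\}|^{(d-1)/d^2} \leq C \beta^{-1/(2 d^2)} |F|^{(d-1)/d}, \]
i.e.\ the restricted weak-type $(d/(d-1), d^2/(d-1))$ inequality equivalent to the claimed Lorentz bound.

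The main obstacle is essentially exponent arithmetic: verifying that the interpolation line through $((d+1)/d, d+1)$ and $((d-1)/(d-2), d(d-1)/(d-2))$ passes through the target $(d/(d-1), d^2/(d-1))$, and that the $\lambda$-optimization collapses the two fractional powers of $\beta$ into exactly $\beta^{-1/(2d^2)}$. All the geometric content has already been absorbed into Theorems \ref{theorem1} and \ref{theorem2}; no additional oscillatory-integral or curvature work is required beyond the splitting in $\lambda$ and the real-interpolation step from restricted weak-type to the Lorentz target.
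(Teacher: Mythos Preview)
Your proposal is correct and follows essentially the same route as the paper: split according to the size of $|W^1|$ relative to a threshold, apply \eqref{rad1} to the large-$|W^1|$ piece and \eqref{rad2} (with $|\varphi(E)|\le 2\lambda$) to the small piece, then optimize the threshold. The only cosmetic difference is that the paper works with the bilinear pairing $\int \chi_F\, R^0 \chi_G$ and optimizes a single parameter $\epsilon$ there, whereas you pass through the distribution function via Chebyshev; both yield the same restricted weak-type $(d/(d-1),\,d^2/(d-1))$ bound with constant $C\beta^{-1/(2d^2)}$.
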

\begin{proof}
The proof rests on the inequality which holds for all $\epsilon > 0$:
\[ \chi_{|W^2_{x,y}| \geq \beta} \leq |\epsilon^{-1} W^1_{x,y}|^{\frac{1}{d+1}} + |\beta^{-1} W^2_{x,y} |^{\frac{1}{d(d-1)}} \chi_{|W^1_{x,y}| \leq \epsilon}. \]
In particular,
\begin{align*}
 \int \chi_{F}(x) R^0 \chi_G(x) dx \leq & \ \epsilon^{-\frac{1}{d+1}} \int \chi_{F}(x) R^0_{W^1} \chi_G(x) dx \\ & + \beta^{-\frac{1}{d(d-1)}} \int \chi_{F}(x) R^0_{W^2} \chi_G(x) dx. 
\end{align*}
Applying the inequalities \eqref{rad1} and \eqref{rad2} and optimizing the right-hand side over the choice of $\epsilon$ establishes the result.
\end{proof}

\subsection{Geometric characterizations}
Theorems \ref{theorem1} and \ref{theorem2} are themselves consequences of the following theorem, which is of a similar spirit to the work of Oberlin \cite{oberlin2000II}.  Specifically, the reader's attention is drawn to corollary 3 of \cite{oberlin2000II}, which establishes that, for any surface measure $\mu$ on $\R^n$, 
\[ || \mu \star \chi_E ||_{n+1} \leq C |E|^{\frac{n}{n+1}} \ \forall E \Leftrightarrow \mu(R) \leq C' |R|^{\frac{n-1}{n+1}} \ \forall \mbox{ rectangles } R \subset \R^n. \]
Oberlin establishes this result by considering the special case when $\mu$ is given by affine surface measure (which is obtained by multiplying the Euclidean surface measure times the absolute value of the Gaussian curvature raised to the power $\frac{1}{n+1}$) and showing that this particular measure is in some sense the ``largest'' measure which satisfies the rectangle condition.  The converse follows by testing the convolution directly on rectangles $R$.

This rectangle condition appears to be fundamentally connected to many of the most important geometric problems arising in harmonic analysis.  Aside from its appearance in the context of averaging operators \cite{oberlin2000II}, Bak, Oberlin, and Seeger \cite{bos2008} have shown that the same condition implies Fourier restriction estimates when $\mu$ is supported on a curve; more recently, Bak and Seeger \cite{bs2010} have proved general Fourier restriction estimates for measures using only assumptions on the decay of the Fourier transform coupled with a geometric measure inequality very similar to Oberlin's rectangle condition.  It has also been shown that this rectangle condition is equivalent to the boundedness of certain multilinear determinant functionals \cite{gressman2010} which are essentially ubiquitous in the literature, appearing in, for example, \cite{fefferman1970}, \cite{zygmund1974}, \cite{prestini1979}, \cite{christ1984}, \cite{christ1985}, \cite{dm1985}, \cite{dm1987}, \cite{drury1988}, \cite{valdimarsson}.  This same determinant functional also arises in the case of averaging operators over hypersurfaces, and so the characterization \cite{gressman2010} turns out to be a key tool which allows one to establish uniform $L^p$--$L^q$ estimates directly by testing an appropriate measure on rectangles:
\begin{theorem}
Let $\rho : U \rightarrow \R$, where $U$ is an open subset of $\R^{d_l} \times \R^{d_r}$. For any $x \in \R^{d_l}$, let $U_x := \set{y \in \R^{d_r}}{(x,y) \in U}$.  Fix a closed set $E \subset U$ and suppose that there exists a weight $w(x,y)$ for which the uniform geometric estimate 
\begin{equation} \sup_{x \in \R^{d_l}} \sup_{Q \in \GL(d_l,\R)} |Q|^{\frac{s}{d_l}} \int_{U_x} \frac{|w(x,y)| \chi_{E}(x,y) }{|Q \partial_x \rho(x,y)|^s} dy  \leq K_E |\rho(E)|  \label{geomassump}
\end{equation}
holds.  Then there is a constant $C$ such that 
%  Then when
%\[ E_{\epsilon,B} := \set{ (x,y) \in U}{ |\rho(x,y)| \leq \epsilon \mbox{ and } \varphi(x,y) \in B} \]
%and $\epsilon \leq \epsilon_0$, one has the uniform estimate 
\begin{equation} 
\int_E |w(x,y)|^{\frac{1}{s+1}} |f(x)g(y)| dx dy \leq C K_E^{\frac{1}{s+1}} |\rho(E)|  ||f||_{p'} ||g||_{\frac{s+1}{s}}  \label{geomineq}
\end{equation}
 holds for all $f,g$, where \label{geomthm}
\[ \frac{1}{p'} = 1 - \frac{1}{d_l} \frac{s}{s+1} \]
and the constant $C$ equals
 $C_{d_l} N_L^{\frac{s}{d_l(s+1)}}$ for some fixed $C_{d_l}$ depending only on dimension and some constant $N_L$ which counts nondegenerate solutions of a system of equations on $U$ associated to the function $\rho$.
\end{theorem}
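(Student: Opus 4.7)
My plan is to reduce \eqref{geomineq} to an Oberlin-type rectangle condition and then invoke the equivalence, proved in \cite{gressman2010}, between that rectangle condition and the boundedness of a $(d_l+1)$-linear determinant functional. For each fixed $x$, I would consider the pushforward measure on $\R^{d_l}$ defined by
\[ \mu_x(A) := \int_{U_x} \chi_E(x,y)\, |w(x,y)|\, \chi_A(\partial_x \rho(x,y))\, dy. \]
Choosing $Q \in \GL(d_l,\R)$ in \eqref{geomassump} so that $Q$ sends a given parallelepiped $R \subset \R^{d_l}$ to a bounded set (so that $|Q|^{-1} \sim |R|$ and $|Q\xi| \lesssim 1$ on $R$) yields the rectangle condition $\mu_x(R) \lesssim K_E |\rho(E)|\, |R|^{s/d_l}$, uniformly in $x$. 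This is exactly the type of hypothesis to which the characterization of \cite{gressman2010} applies.

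By that main result, the rectangle condition for $\mu_x$ is equivalent to an estimate for the $(d_l+1)$-linear determinant functional built from $\mu_x$. Pulling this estimate back from $\xi$-space to $y$-space via $y \mapsto \partial_x \rho(x,y)$ produces, uniformly in $x$, an inequality of the form
\[ \int G(y_0)\cdots G(y_{d_l})\,\bigl|\det[\partial_x\rho(x,y_0),\ldots,\partial_x\rho(x,y_{d_l})]\bigr|^{\alpha}\prod_{i=0}^{d_l}|w(x,y_i)|\chi_E(x,y_i)\,dy_i \leq C\,N_L^{\gamma}\,(K_E|\rho(E)|)^{d_l+1}\|G\|^{\beta}, \]
where $N_L$ enters as the generic fiber cardinality of the map $y \mapsto \partial_x \rho(x,y)$ (equivalently, the nondegenerate solution count for the corresponding algebraic system in $y$ for generic $\xi$), and $\alpha,\beta,\gamma$ are fixed by the scaling dictated by the exponent $s/d_l$ of the rectangle condition.

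To extract the bilinear estimate \eqref{geomineq}, I would raise the $(d_l+1)$-linear inequality to the power $1/(s+1)$, integrate against $f(x)$, and apply Hölder's inequality in $x$. This single operation converts $|w|$ and $K_E$ to their correct powers $1/(s+1)$; collapses the symmetric $(d_l+1)$-linear structure in $g$ into a single $\|g\|_{(s+1)/s}$ norm; and extracts $\|f\|_{p'}$ with precisely $1/p' = 1 - (1/d_l)(s/(s+1))$ from the $x$-integration. The stated constant $C_{d_l}N_L^{s/(d_l(s+1))}$ emerges as $N_L^{\gamma/(s+1)}$ once $\gamma$ is computed from the collapse.

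The main obstacle is the precise bookkeeping of exponents across this chain: the exponent $\alpha$ on the determinant in \cite{gressman2010}, the power of $|w|$ on each factor, and the total power of $K_E|\rho(E)|$ must all be matched to $s/d_l$ so that after raising to the $1/(s+1)$-th root and executing Hölder in $x$ the exponents on $f$, $g$, $w$, $K_E$ and $N_L$ land exactly as stated. A secondary technical difficulty is that when $d_r > d_l$ the map $y \mapsto \partial_x \rho(x,y)$ is not a diffeomorphism, so transferring the $\xi$-space determinant functional back to $y$-space requires the assumed control of the fiber structure by $N_L$; without this counting bound the argument breaks down in exactly the direction where $d_r$ strictly exceeds $d_l$.
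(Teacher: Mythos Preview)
Your outline has the architecture of the argument inverted. The result from \cite{gressman2010} that the rectangle condition controls the determinant functional is a \emph{coercive lower bound}, not an upper bound: for each fixed $x$, \eqref{geomassump} yields
\[
\int_{U_x^{d_l}} \bigl|\partial_x\rho(x,y_1)\cdots\partial_x\rho(x,y_{d_l})\bigr|\prod_{i=1}^{d_l}|g(y_i)|\chi_E(x,y_i)\,dy_i \;\gtrsim\; (K_E|\rho(E)|)^{-d_l/s}\Bigl(\int_{U_x} |g|^{\frac{s}{s+1}}|w|^{\frac{1}{s+1}}\chi_E\,dy\Bigr)^{\frac{d_l(s+1)}{s}}.
\]
This is the paper's \eqref{maindet}, and by itself it says nothing about $\|f\|_{p'}$, $\|g\|_{(s+1)/s}$, or $N_L$. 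The missing half of the sandwich is a matching \emph{upper} bound on the \emph{same} $d_l$-linear determinant functional after integrating in $x$, and this is where $N_L$ actually enters: for fixed $y_1,\ldots,y_{d_l}$ one applies the change of variables $x\mapsto(\rho(x,y_1),\ldots,\rho(x,y_{d_l}))$ on $\R^{d_l}$, whose Jacobian is precisely $|\partial_x\rho(x,y_1)\cdots\partial_x\rho(x,y_{d_l})|$ and whose nondegenerate multiplicity is bounded by $N_L$ via \eqref{boundconst}. Using $\chi_E(x,y_i)\leq\chi_{\rho(E^{y_i})}(\rho(x,y_i))$ then gives the upper bound $N_L(|\rho(E)|\,\|g\|_1)^{d_l}$. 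Combining the two bounds on the determinant functional (with $|g|$ replaced by $|g|^{(s+1)/s}$) produces the $L^{d_l(s+1)/s}(dx)$ norm of $\int_{U_x} g\,|w|^{1/(s+1)}\chi_E\,dy$, and duality against $f$ finishes.

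So three concrete corrections: (i) the functional is $d_l$-linear (vectors $\partial_x\rho(x,y_i)\in\R^{d_l}$), not $(d_l+1)$-linear; (ii) the inequality coming from the rectangle condition points the other way; (iii) $N_L$ counts nondegenerate solutions in $x$ of the system $\rho(x,y_i)=c_i$ (i.e.\ transverse intersections of the hypersurfaces $\Sigma^{L,c_i}_{y_i}$), not fibers of $y\mapsto\partial_x\rho(x,y)$. In particular there is no issue when $d_r>d_l$; the map $y\mapsto\partial_x\rho$ is never inverted.
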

Note that the assumption \eqref{geomassump} is exactly the statement that the measures $\mu_x$ supported on cones in $\R^d$ given by
\[ \int f(z) d \mu_x(z) := \int_0^\infty \int_{U_x} \chi_{E}(x,y) |w(x,y)| f(\lambda \partial_x \rho(x,y)) \lambda^{s-1} dy d \lambda \]
uniformly satisfy an Oberlin-type condition on rectangles which are centered at the origin.  Theorem \ref{geomthm2} provides an alternative to the condition \eqref{geomassump} which is not posed on cones but requires a slightly stronger uniformity condition.  Both theorems \ref{geomthm} and \ref{geomthm2} will be used in section \ref{finalsec} to prove $L^p$--$L^q$ estimates for averages over hypersurfaces in various degenerate situations.

Aside from theorem \ref{geomthm} and its companion, theorem \ref{geomthm2}, the second major concern of this paper is to begin a direct and in-depth study of the measure condition \eqref{geomassump}.  The condition \eqref{geomassump} measures the ``curvature'' of the measures $\mu_x$ in an affine-invariant way.  Despite its simplicity relative to \eqref{geomineq}, the inequality \eqref{geomassump} is nevertheless still surprisingly difficult to establish directly outside of certain ``trivial'' cases.  It turns out, though, that one can also establish inequalities of the form \eqref{geomassump} given information like that contained in \eqref{geomineq}.  As a result, it is possible to generate a number of new, nontrivial geometric estimates of the form \eqref{geomassump} by an inductive procedure (which is closely related to the inductive procedure from \cite{gressman2010II}).  This inductive procedure is carried out in section \ref{inductionsec}, while section \ref{specificsec} is devoted to the analysis of some of the simpler products of the induction procedure, which are computed explicitly in lemmas \ref{codim1lemma}, \ref{codim2lemma}, \ref{codim2klemma}, and \ref{codim3lemma}.

\subsection{The finite multiplicity condition}
Regarding the nature of the constants appearing in theorem \ref{geomthm} and, by extension, theorems \ref{theorem1} and \ref{theorem2}, a definition is in order.
Given a finite collection of function $f_1,\ldots,f_k$ defined on a domain $U \subset \R^d$, it will be necessary to consider the algebraic properties of mappings generated by these functions in the following specific way.  Let $x \in \R^k$, $y_1,\ldots,y_m \in U$, and let $\Psi$ be the mapping given by
\[ (x,y_1,\ldots,y_m) \stackrel{\Psi}{\mapsto} (p_1(x,f(y)),\ldots,p_{k+md}(x,f(y))), \]
where each $p_i$ a polynomial in the $f_i(y_j)$'s and the partial derivatives of these functions. It will be necessary to count the nondegenerate multiplicity of this mapping, meaning specifically that it will be necessary for
\[ \# \set{(x,y_1,\ldots,y_m)}{ \Psi(x,y_1,\ldots,y_m) = c \mbox{ and } \left| \frac{\partial \Psi}{\partial (x,y_1,\ldots,y_m)} \right| \neq 0} \leq N \]
for some constant $N$ which is independent of $c$.  A constant $C$ depending on $f_1,\ldots,f_k$ and $U$ will be called ``topological'' when it depends on the nondegenerate multiplicity of finitely many mappings of the form $\Psi$.
Note that it suffices to assume that this condition holds for all such $\Psi$ when $x$ is restricted to the unit ball.  This fact may be established by adjoining a scaling parameter $\lambda$ to the $x$-variables so that
\[ (\lambda, x,y_1,\ldots,y_m) \stackrel{\Psi'}{\mapsto} ( p_1 (x, f(y), \lambda),\ldots, p_{md+k} (x,f(y),\lambda), \lambda) \]
and the polynomials are homogeneous
\[ {\Psi}'(x,y,\lambda) = (a_1,\ldots,a_{md+k},1) \Leftrightarrow {\Psi}'(t x, y, t \lambda) = (t^{s_1} a_1,\ldots,t^{s_{md+k}} a_{md+k}, t) \]
so that every solution $\Psi(x,y) = c$ corresponds to a solution $\Psi'(x,y,\lambda) = c'$ for which $|x|^2 + \lambda^2 \leq 1$).

All of the constants appearing in theorems \ref{geomthm} and \ref{geomthm2} as well as the lemmas \ref{codim1lemma}--\ref{codim3lemma} will be topological constants.  As a consequence of the work of Khovanski\u\i \cite{khovanskii1980}, \cite{khovanskii1991}, the number of nondegenerate solutions of any polynomial system of equations is finite and may be bounded in terms of the degrees (which is, in this case, also a consequence of Bezout's theorem).  In particular, when $C$ is a topological constant and $f_1,\ldots,f_k$ are polynomials, then $C$ is automatically finite and may be taken to depend only on the degrees and the mappings $\Psi$ involved.  Khovanski\u\i's theorem applies in the general situation of Pfaffian functions, which are real-analytic functions satisfying certain systems of differential equations on $U$ as well, but this will not play a major role here.  It should also be mentioned that when $U$ is bounded $f_1,\ldots,f_k$ are real analytic on some neighborhood of the closure of $U$, the work of Gabrielov \cite{gabrielov1968} on projections of semi-analytic sets guarantees again that topological constants are finite and uniform under analytic perturbations of the functions $f_i$; more details may be found in the work of Gabrielov and Vorobjov \cite{gv2004}.  Thus, theorems \ref{theorem1} and \ref{theorem2} will also hold when $\rho$ is real analytic on some neighborhood of the compact set $\overline{U}$, although the uniformity of the constant will hold only for fixed, small analytic perturbations of $\rho$.  Finally, we remark that the most general situation in which topological constants must be finite and uniform under perturbation is when the functions $f_i$ arise from an o-minimal structure.  The reader is referred to van den Dries \cite{vandendries1998} for a short and highly readable introduction to this general subject.

In general, the precise form of the topological constants involved in the various theorems will be too complex to explicitly identify (thanks to the induction procedure, many different systems of equations must be required to have bounded nondegenerate multiplicity).  In the case of theorems \ref{theorem1} and \ref{geomthm}, however, it is possible to explicitly identify the constant in question, at least in the case when $d_l = d_r$.  Let
\[ \Sigma_y^{L,c} := \set{ x \in \R^{d_l} } { (x,y) \in U, \ \rho(x,y) = c, \partial_x \rho(x,y) \neq 0}. \]
For theorems \ref{theorem1} and \ref{geomthm} to hold with a finite constant, it is sufficient to assume that there is a finite constant $N_L$ satisfying two constraints.  The first is that, for any $y_1,\ldots,y_{d_l} \in \R^{d_r}$ and any $c_{1},\ldots,c_{d_l} \in \rho(E)$, 
\begin{equation} 
\begin{split} 
\#  \set{x \in \Sigma_{y_{1}}^{L,c_{1}} \cap \cdots \cap \Sigma_{y_{d_l}}^{L,c_{d_l}}}{
 % x \in \Sigma_{y_i}^{R,t_i} \ \forall i, 
   \left| \partial_x \rho(x,y_1) \cdots \partial_x \rho(x,y_{d_l}) \right| \neq 0 } \leq N_L
\end{split} \label{boundconst}
\end{equation}
(throughout this paper, the notation $|v_1 \cdots v_k|$ will refer to the absolute value of the determinant of the matrix with rows $v_1,\ldots,v_k$).
In other words, any $d_l$-tuple of submanifolds $\Sigma_{y_i}^{L,c_i}$ has no more than $N_L$ points at which the hypersurfaces intersect transversely.  The second constraint on $N_L$ is that, for all $c \in \rho(E)$, the left projection $\pi_L : {\mathscr C}_c \rightarrow T^* (\R^{d_l})$ also has nondegenerate multiplicity at most $N_L$ (meaning again that there are at most $N_L$ points in ${\mathscr C}_c$ at which the Jacobian of the mapping is nonvanishing which are sent to any fixed image point).  %This constant $N_L$ is also the same constant that appears in theorem \ref{geomthm}.

\subsection{Notation and outline}

Given any vector $v \in \R^N$, the notation $v^{(k)}$ will denote the projection of $v$ onto the subspace spanned by the first $k$ coordinate directions (and likewise for $E^{(k)}$ when $E \subset \R^N$).  For any set closed $E \subset \R^{d_l} \times \R^{d_r}$, the notation $E_x$ will be used to refer to either of the sets
\[ \set{y \in \R^{d_r}}{ (x,y) \in E} \mbox{ or }  E \cap \left( \{x\} \times \R^{d_r} \right), \]
 the particular meaning being determined by context.  Likewise $E^y$ will equal either
\[ \set{x \in \R^{d_l}}{ (x,y) \in E} \mbox{ or }  E \cap \left( \R^{d_l} \times \{y\} \right). \]
All mappings will be assumed smooth unless otherwise stated (although at no point will more than a finite amount of differentiability be explicitly required).  General sets $E$ will be taken to be closed (unless explicitly stated otherwise) to ensure that any continuous image of $E$ into $\R^N$ is measurable.  In practice, all that is actually required of $E$ is that it be Lebesgue measurable along with its relevant projections (and, in fact, the standard approximation argument yields conclusions valid for any measurable $E$ so long as the measure of the projections is replaced by the outer measure). 
Finally, if $\varphi$ is any mapping defined on $U \subset \R^{d_l} \times \R^{d_r}$ and $E \subset U$ is closed, the left and right widths of $E$ with respect to $\varphi$ will refer to the quantities
\[ |\varphi(E)|_L := \mathop{\mathrm{ess.sup}}_y |\varphi(E^y)| \mbox{ and } |\varphi(E)|_R := \mathop{\mathrm{ess.sup}}_x |\varphi(E_x)|. \]

The present paper refines and improves both the geometric perspective and the inductive constructions which appeared in \cite{gressman2010II} in several fundamental ways:  first, the functions $f_i$ will not necessarily be taken on the real line, but on $\R^d$.  This simple change requires a fundamentally new approach for the proof, but the spirit of the results remains the same (since, for example, one may recover estimates for the multilinear, one-dimensional setting by testing \eqref{mainobj} on functions $f_i$ which factor into a $d_i$-fold product of functions each of which depends on only one coordinate).  The second extension of \cite{gressman2010II} to be implemented here is to shed more light on the geometric interpretation of some of the natural quantities governing the behavior of \eqref{mainobj}.  The third and final extension to be undertaken here is to establish uniform, weighted versions of \eqref{mainobj}.

%Optimal regularity in Sobolev spaces for FIOs with one-sided folds Comech \cite{comech1999}

The outline of the rest of this paper is as follows:  section \ref{prelim} is devoted to what may seem at first to be three independent but fairly elementary observations.  The first deals with multilinear determinant functionals and is essentially a minor adaptation of the main theorem of \cite{gressman2010}, which establishes the connection between the Oberlin-type rectangle condition (in its modified form appearing in theorem \ref{geomthm}) to multilinear determinant functionals.  The second topic of section \ref{prelim} is to establish something akin to a converse of theorem \ref{geomthm}, namely, that certain Radon-like integral estimates imply estimates of the form \eqref{geomassump} (this latter sort of estimate will be called a ``geometric integral'' in much of what follows).  This argument is merely an extension of the observation by Oberlin \cite{oberlin2000II} that the rectangle condition can be estimated by testing convolution operators on rectangles.  The final goal of section \ref{prelim} is to establish a version of the change-of-variables formula which will be broadly useful throughout the remainder of the paper.

Section \ref{theorempf} is devoted to the proof of theorem \ref{geomthm} and its companion theorem \ref{geomthm2}.  Throughout section \ref{theorempf}, the condition \eqref{geomassump} will be treated as a black box; this is in contrast with the related results of Oberlin \cite{oberlin2000II} which establish estimates using an explicit characterization of the weights involved (namely, that it equals a power of the Gaussian curvature).

Sections \ref{inductionsec} and \ref{specificsec} are devoted to an explicit study of the curvature condition \eqref{geomassump}.  Specifically, section \ref{inductionsec} describes an inductive procedure by which knowledge of a particular weight functional $w$ (depending on $\rho$ and $\varphi$) which establishes estimates like \eqref{geomassump} will necessarily imply the existence of a family of such weights.  These new weights are relatively easy to describe in terms of the old weights (the definition is given by \eqref{newfunc}), but the geometric interpretation of the construction is far from obvious.  For that reason, section \ref{specificsec} is devoted to the explicit calculation and analysis of the first few weights in the series generated by theorem \ref{gentheorem}.

Section \ref{finalsec} contains the proofs of theorems \ref{theorem1} and \ref{theorem2}, which are accomplished by applying the results of section \ref{specificsec} to theorem \ref{geomthm}.  Section \ref{finalsec} also contains a number of related results of interest including estimates implied by the nondegeneracy of minors of the matrix \eqref{radonrotcurv} (in the spirit of Cuccagna's result for FIOs \cite{cuccagna1996}) as well as a generalization of theorem \ref{greenleafseeger} corresponding to the situation of Greenleaf and Seeger \cite{gs1994} of an arbitrary number of nonvanishing principal curvatures.

Finally, section \ref{appendix} contains the appendix, which explains in greater detail the transition from sublevel set operator to Radon-like transform via the coarea formula.

\section{Preliminaries}
\label{prelim}
\subsection{Coercive determinant functional inequalities}

In this subsection, it is presumed that one has established an inequality of the ``geometric'' type, and this inequality is used to establish a coercive estimate for a multilinear determinant functional.  In particular, suppose $U \subset \R^d$ is open and $\Phi : U \rightarrow \R^n$ is continuous.  It will be assumed that there exists a nonnegative measurable function $w$ on $U$ so that
\begin{equation}
 \sup_{Q \in \GL(n,\R)} |Q|^{\frac{s}{n}} \int_U \frac{w(y) dy}{|Q \Phi(y)|^{s}} \leq K_{\Phi,w,s} < \infty  \label{boxes}
\end{equation}
holds for some positive $s$.  The conclusion is the following:
\begin{lemma}
Suppose that \eqref{boxes} holds.  Then there is a constant $c_n$ depending only on $n$ such that
\begin{equation*}
\begin{split} \int_{U^n} | \Phi(y_1) \cdots \Phi(y_n)| &  \prod_{j=1}^n |g_j(y_j)| w(y_j) dy_1 \cdots dy_n \\
 \geq c_n & \frac{(s+1)^n}{s^n} K_{\Phi,w,s}^{-\frac{n}{s}} \prod_{j=1}^n \left( \int_U |g_j(y)|^{\frac{s}{s+1}} w(y) dy \right)^{\frac{s+1}{s}}
\end{split} %\label{maindet}
\end{equation*}
for any measurable functions $g_1,\ldots,g_n$ on $U$. Here $K_{\Phi,w,s}$ is exactly the same constant as in \eqref{boxes}. In particular, it also must be the case that
\begin{equation}
\begin{split} \int_{U^n} | \Phi(y_1) \cdots \Phi(y_n)| &  \prod_{j=1}^n |g_j(y_j)| dy_1 \cdots dy_n \\
 \geq c_n & K_{\Phi,w,s}^{-\frac{n}{s}} \prod_{j=1}^n \left( \int_U |g_j(y)|^{\frac{s}{s+1}} |w(y)|^{\frac{1}{s+1}} dy \right)^{\frac{s+1}{s}}
\end{split} \label{maindet}
\end{equation}
for any measurable $g_1,\ldots,g_n$ on $U$ provided that $s > 0$.
\end{lemma}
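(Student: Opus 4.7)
The plan is to derive this lemma as a direct adaptation of the main theorem of \cite{gressman2010}, which equates Oberlin-type rectangle conditions on measures in $\R^n$ with coercive multilinear determinant functional inequalities.

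First I would associate to the pair $(\Phi, w)$ the pushforward measure $\mu$ on $\R^n$ of $\lambda^{s-1} w(y) \, d\lambda \, dy$ under the cone parameterization $(\lambda, y) \in (0,\infty) \times U \mapsto \lambda \Phi(y)$.  For $Q \in \GL(n,\R)$ a one-dimensional integration yields
\[
\mu(Q^{-1}[-1,1]^n) = \int_U \int_0^{1/|Q\Phi(y)|} \lambda^{s-1} \, d\lambda \, w(y) \, dy = \frac{1}{s} \int_U \frac{w(y) \, dy}{|Q \Phi(y)|^s},
\]
so the hypothesis \eqref{boxes} is exactly equivalent to the Oberlin-type rectangle condition $\mu(R) \leq s^{-1} K_{\Phi,w,s} |R|^{s/n}$ for all rectangles $R \subset \R^n$ centered at the origin.

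Next I would invoke the main theorem of \cite{gressman2010} to extract the corresponding coercive determinant functional estimate.  I would actually run that argument in the cone coordinates $(\lambda,y)$ on $(0,\infty) \times U$ directly rather than on the pushforward $\mu$, so that the test functions can be freely chosen on the cone.  Taking them to factor as $h_j(\lambda,y) := g_j(y) \chi_{(0,1]}(\lambda)$ and using the identity $|\lambda_1 \Phi(y_1) \cdots \lambda_n \Phi(y_n)| = (\prod_j \lambda_j) \, |\Phi(y_1) \cdots \Phi(y_n)|$, the one-dimensional integrations in $\lambda_j$ contribute the factor $\int_0^1 \lambda^s \, d\lambda = (s+1)^{-1}$ to the LHS and $\int_0^1 \lambda^{s-1} \, d\lambda = s^{-1}$ to each $L^{s/(s+1)}$-norm on the RHS; combining these with the $(s^{-1} K_{\Phi,w,s})^{-n/s}$ produced by the rectangle constant accounts for the explicit prefactor $c_n (s+1)^n s^{-n} K_{\Phi,w,s}^{-n/s}$ claimed in the first inequality.

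The second inequality \eqref{maindet} then follows from the first by the substitution $g_j \mapsto g_j / w$: the weight factors $(g_j/w)^{s/(s+1)} w$ collapse to $g_j^{s/(s+1)} w^{1/(s+1)}$, and the $(s+1)^n/s^n$ prefactor is absorbed into the (renamed) constant $c_n$.  The main technical obstacle I foresee is that the cone parameterization $(\lambda, y) \mapsto \lambda \Phi(y)$ need not be injective, so not every function on $(0,\infty) \times U$ descends to a well-defined function on $\R^n$; I would address this by running the argument of \cite{gressman2010} intrinsically on the cone, exploiting the fact that its combinatorial core depends only on the linear-algebraic structure of the directions $\lambda \Phi(y)$ and the weight $\lambda^{s-1} w(y)$, both of which are transparently accessible in the cone coordinates without any need to descend through the pushforward.
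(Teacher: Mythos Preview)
Your proposal is essentially the same strategy as the paper's: build the cone measure $\mu$ on $\R^n$ via $(\lambda,y)\mapsto\lambda\Phi(y)$ with weight $\lambda^{s-1}w(y)$, verify that \eqref{boxes} is exactly the centered-rectangle condition $\mu(B)\le s^{-1}K_{\Phi,w,s}|B|^{s/n}$, invoke the determinant-functional result of \cite{gressman2010}, and then integrate out $\lambda$ to recover the stated inequality on $U$. Your derivation of the second inequality by substituting $g_j\mapsto g_j/w$ also matches.

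Two places where the paper is more concrete than your sketch are worth flagging. First, to deal with the non-injectivity of $(\lambda,y)\mapsto\lambda\Phi(y)$, the paper does \emph{not} rerun the argument of \cite{gressman2010} on the cone; instead it treats that theorem as a black box on $\R^n$ (conclusion for sets $E_j\subset\R^n$), and then lifts to sets $F_j\subset U$ (or $(0,\infty)\times U$) by a short Radon--Nikodym/dyadic level-set argument: one writes $\mu_j:=\chi_{F_j}\cdot\mu$ as $e_j\,d\mu$ with $0\le e_j\le1$, decomposes $e_j$ into superlevel sets $\{e_j\ge\epsilon^{k+1}\}$, applies the $\R^n$-set version to those, and sums. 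Second, the paper passes from sets to functions not via a layer-cake on $g_j$ but by the choice of sets $F_j:=\{(y,\lambda):\lambda\le|g_j(y)|^{1/(s+1)}\}$, so that the $\lambda$-integrations directly manufacture the correct powers; your choice $h_j=g_j(y)\chi_{(0,1]}(\lambda)$ would instead require first upgrading the conclusion of \cite{gressman2010} from characteristic functions to general nonnegative functions, which is routine but is an extra step you have not written down.
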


\begin{proof}
This lemma is essentially a trivial consequence of theorem 3 of \cite{gressman2010}. The content of that theorem (restricted to the case of interest) is as follows:  suppose $\mu$ is a nonnegative Borel measure on $\R^n$ such that
\begin{equation} \int_B d \mu \leq C |B|^{\frac{s}{n}} \label{boxest0} \end{equation}
for all ellipsoids $B \subset \R^n$ centered at the origin.  Then
\begin{equation} \int_{E_1 \times \cdots \times E_n} |x_1 \cdots x_n| d\mu(x_1) \cdots d \mu(x_n) \geq C' \prod_{i=1}^n \mu (E_j)^{\frac{s+1}{s}} \label{boxconc}
\end{equation}
for all measurable sets $E_i \subset \R^n$, where $C'$ equals $C^{-\frac{n}{s}}$ times a constant depending only on $n$ and $s$.  The first task at hand is to show that, when $\mu$ has the special form
\[ \int g d \mu := \int_U g( \Phi(y)) w(y) dy \]
with $U \subset \R^d$ and some $\Phi : U \rightarrow \R^n$,
then the estimate \eqref{boxest0} implies the corresponding inequality
\begin{equation} \int_{U^n} \! \! \! |\Phi(y_1) \cdots \Phi(y_n)| \prod_{j=1}^n \chi_{F_j}(y_j) w(y_j) d y_1 \cdots d y_n \geq C' \prod_{i=1}^n \left[ \int_{F_j} \! \! w(y) dy \right]^{\frac{s+1}{s}}  \label{detimprov}
\end{equation}
for measurable $F_i \subset U \subset \R^d$ and the same $C'$ as above (the difference being that the sets $F_i$ are subsets of $\R^d$ while the sets $E_i$ are subsets of $\R^n$).  To that end, fix $F_1,\ldots,F_n$.  Note that the condition \eqref{boxest0} guarantees that $\mu$ is $\sigma$-finite.  By extension, the measure $\mu_j$ defined by
\[ \int g d \mu_j := \int_U g(\Phi(y)) \chi_{F_j}(y) w(y) dy \]
is also $\sigma$-finite.  Since the $\mu_j(E) \leq \mu(E)$ for any measurable $E \subset \R^n$, the Radon-Nykodym theorem guarantees that 
\[ \int g d \mu_j = \int g e_j d \mu \]
for some function $e_j$ on $\R^n$ which must be nonnegative and bounded above by $1$ $\mu$-almost everywhere. Now fix some $\epsilon$ strictly between $0$ and $1$ and let
\[ E_{j}^k := \set{ x \in \R^n}{ e_j(x) \geq \epsilon^{k+1}} \]
for $j=1,\ldots,n$ and $k$ any nonnegative integer.  Clearly the comparisons
\[ (1-\epsilon) \sum_{k=0}^\infty \epsilon^{k+1} \chi_{F_j^k}(x) \leq e_j(x) \leq (1-\epsilon) \sum_{k=0}^\infty \epsilon^k \chi_{F_j^k}(x) \]
hold $\mu$-a.e.; applying \eqref{boxconc} to the sets $E_1^{k_1},\ldots,E_n^{k_n}$, multiplying both sides by $\epsilon^{k_1+\cdots+k_n+n}(1-\epsilon)^n$, and summing over $k_1,\ldots,k_n$ gives that
\begin{equation*}
\begin{split}
 \int \! | x_1 \cdots x_n |  \prod_{j=1}^n e_j(x_j)  d \mu(x_1) \cdots d \mu(x_n) \geq C' (1-\epsilon)^n \! \! \! \! \! \! \sum_{k_1,\ldots,k_n = 0}^\infty  \prod_{j=1}^n \epsilon^{k_j+1} \mu(E_j^k)^{\frac{s+1}{s}}
\end{split} 
\end{equation*}
for any $\epsilon > 0$.
%H\"{o}lder's inequality may be used to estimate the right-hand side of \eqref{detfunc2}.  
By Jensen's inequality applied to the sum over the $k_i$'s,
\[ (1-\epsilon) \sum_{k=0}^\infty \epsilon^k \mu(E_j^k) \leq \left( (1-\epsilon) \sum_{k=0}^\infty \epsilon^k \mu(E_j^k)^{\frac{s+1}{s}} \right)^{\frac{s}{s+1}}; \]
consequently, it must be the case that
\begin{equation*}
\begin{split}
 \int | x_1 \cdots x_n |  & \prod_{j=1}^n e_j(x_j)  d \mu(x_1) \cdots d \mu(x_n)   \geq C' \epsilon^n  \prod_{j=1}^n \left( \int e_j d \mu \right)^{\frac{s+1}{s}}.
\end{split} %\label{detfunc3}
\end{equation*}
All that remains to establish \eqref{detimprov} is to let $\epsilon \rightarrow 1$ and recall the definition of $e_j$.
%% \begin{equation}
%% \begin{split}
%%  \int | x_1 \cdots x_n |  \prod_{j=1}^n \chi_{E_j}(x_j) & d \mu(x_1) \cdots d \mu(x_n) \\ & \geq c_n ( s K^{-1})^{\frac{n}{s}} \prod_{j=1}^n \left( \int_{E_j} d \mu \right)^{\frac{s+1}{s}}
%% \end{split} \label{detfunc}
%% \end{equation}
%% holds for $\mu$-measurable sets $E_1,\ldots,E_n$ in $\R^n$.  It is preferable, however, to consider sets in $U \times (0,\infty)$.  To that end, let $F_1,\ldots,F_n$ and $F_0$ be any measurable sets in $U \times (0,\infty)$ such that
%% \[ \int_{F_0} \lambda^{s-1} w(y) dy d \lambda < \infty. \]
%% By the Radon-Nykodym theorem, there are nonnegative, $\mu$-measurable functions $f_1,\ldots,f_n$ such that
%% \[ \int_{F_0} g(\lambda \Phi(y)) \chi_{F_j}(y,\lambda) \lambda^{s-1} w(y) dy d \lambda = \int_{F_0} g(\lambda \Phi(y)) f_j(\lambda \Phi(y)) \lambda^{s-1} w(y) dy d \lambda \]
%% for any nonnegative $\mu$-measurable function $g$ (let the measure $\mu$ restricted to $F_0$ be called $\mu_0$).  The functions $f_1,\ldots,f_n$ are clearly less than or equal to $1$ almost everywhere (with respect to $d \mu_0$ as well as with respect to $d y d \lambda$).  
%% Sending $\epsilon \rightarrow 1$ and letting $F_0$ exhaust $U \times (0,\lambda)$ gives that
%% \begin{align*}
%%  \int & | \lambda_1 \Phi(y_1) \cdots \lambda_n \Phi(y_n)| \prod_{j=1}^n \chi_{F_j}(y_j,\lambda_j) \lambda_j^{s-1} w(y_j) d y_1 d \lambda_1 \cdots d y_n d \lambda_n \\
%% & \geq c_n (s K^{-1})^{\frac{n}{s}} \prod_{j=1}^n \left( \int_0^\infty \! \! \int_U \chi_{F_j}(y,\lambda) \lambda^{s-1} w(y) dy d \lambda \right)^{\frac{s+1}{s}}.
%% \end{align*}

To finish the lemma, consider the measure $\mu$ on $\R^n$ given by
\[ \int g d \mu := \int_0^\infty \! \! \int_U g(\lambda \Phi(y)) \lambda^{s-1} w(y) dy d \lambda. \]
This is clearly a nonnegative Borel measure; moreover, Fubini's theorem dictates that
\[ \int \chi_{|Q x| \leq 1} d \mu(x) = \frac{1}{s} \int_U \frac{w(y) dy}{|Q \Phi(y)|^{s}} \]
so the condition \eqref{boxes} is equivalent to the assertion that, for any centered ellipsoid $B \subset \R^n$,
\[ \int_B d \mu \leq s^{-1} K |B|^{\frac{s}{n}}. \]
To conclude, fix measurable functions $g_1(y),\ldots,g_n(y)$, and apply \eqref{detimprov} to the sets $F_j$ given by $F_j := \set{(y,\lambda)}{ \lambda \leq |g_j(y)|^{1/(s+1)}}$.  By Fubini's theorem, the lemma is established, since
\begin{align*}
 \int |\lambda_1 \Phi(y_1)  \cdots \lambda_n \Phi(y_n)| & \prod_{j=1}^n \lambda_j^{s-1} \chi_{F_j}(y_j,\lambda_j) d \lambda_1 \cdots d \lambda_n \\
&  = |\Phi(y_1) \cdots \Phi(y_n)| (s+1)^{-n} \prod_{j=1}^{n} |g_j(y_j)| 
\end{align*}
and $\int \chi_{F_j}(y,\lambda) d \lambda = s^{-1} |g_j(y)|^{s}$.
\end{proof}

\subsection{Geometric integral inequalities}

Next it will be presumed that an estimate for a Radon-like operator has been obtained, and this estimate will be used to deduce an inequality of the geometric type, namely, of the form \eqref{boxes}.  To that end, let $U$, $\Phi$, and $w$ be as before, and assume that
\begin{equation} 
\left| \int_{\R^n} \int_U f(x+ \Phi(y)) g(x) w(y) dy dx \right| \leq C_{\Phi,w,r} ||f||_{L^p(\R^n)} ||g||_{L^q(\R^n)} \label{simpleradon}
\end{equation}
for some $p,q$ satisfying $\frac{1}{p} + \frac{1}{q} = \frac{2}{r}$.  Note that the change of variables $x \mapsto -x -\Phi(y)$ combined with the reflections $f(x) \mapsto f(-x)$ and $g(x) \mapsto g(-x)$ mean that \eqref{simpleradon} will also hold with the same constant when $p$ and $q$ are exchanged.  By Riesz-Thorin, then, one may assume without loss of generality that \eqref{simpleradon} holds for $p = q = r$.

\begin{lemma}
Suppose that \eqref{simpleradon} holds.  Let $(\Phi,1) : U \rightarrow \R^{n+1}$ be given by $(\Phi(y),1) := (\Phi_1(y),\ldots,\Phi_n(y),1)$.  Then
\begin{equation}
\sup_{Q \in \GL(n+1,\R)} |Q|^{\frac{2}{r}-1} \int_U \frac{w(y) dy}{|Q (\Phi(y),1)|^{(n+1)( \frac{2}{r}-1)}} \leq c_n C_{\Phi,w,r}
\end{equation}
for some constant $c_n$ depending only on $n$ and the constant $C_{\Phi,w,r}$ equals the corresponding constant in \eqref{simpleradon}. \label{srlemma}
\end{lemma}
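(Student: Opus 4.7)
The starting point is the Riesz--Thorin reduction already noted in the excerpt, by which \eqref{simpleradon} may be assumed to hold with $p = q = r$. My plan is first to extract an affine (non-centered) Oberlin-type condition on $\R^n$ for the push-forward $\mu_0 := \Phi_\# (w\,dy)$, and then convert it, via a dyadic layer-cake identity together with a Beta-function computation, into the fractional-integral statement of the lemma. The exponent $(n+1)$ enters through the codimension of the affine hyperplane $\{z_{n+1} = 1\} \subset \R^{n+1}$ on which the lifted measure $(\Phi,1)_\#(w\,dy)$ is supported.

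To extract the Oberlin bound, I would test \eqref{simpleradon} with $f = \chi_B$ and $g = \chi_{B+a}$, where $B \subset \R^n$ is a centered symmetric ellipsoid and $a \in \R^n$ is arbitrary. Computing the left-hand side as $\int_U w(y)\,|(B - \Phi(y)) \cap (B + a)|\,dy$ and using the elementary fact $|B \cap (B + u)| \geq 2^{-n}|B|$ for $u \in \tfrac{1}{2}B$ yields
\[ \mu_0(E) \leq c_n\, C_{\Phi,w,r}\,|E|^{\frac{2}{r}-1} \]
uniformly over all ellipsoids $E \subset \R^n$, not merely centered ones.

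Next I fix $Q \in \GL(n+1,\R)$, write $Q = [Q_0\,|\,q]$ with $Q_0 \in \R^{(n+1)\times n}$ its first $n$ columns and $q \in \R^{n+1}$ its last, and set $V := Q_0(\R^n)$ and $d := \mathrm{dist}(q, V)$. The Schur complement yields $|\det Q| = d\,\sqrt{\det(Q_0^T Q_0)}$. For each $R > 0$, the set $E_R := \{x \in \R^n : |Q_0 x + q| \leq R\}$ is empty when $R < d$ and, for $R \geq d$, is the $Q_0$-preimage of an $n$-ball of radius $\sqrt{R^2 - d^2}$ lying in $V$, so $|E_R| = \omega_n (R^2 - d^2)^{n/2}/\sqrt{\det(Q_0^T Q_0)}$ (with $\omega_n$ the volume of the unit ball in $\R^n$). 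Setting $s := (n+1)(\tfrac{2}{r}-1)$, the layer-cake identity combined with the Oberlin bound of the previous paragraph gives
\[ \int_U \frac{w(y)\,dy}{|Q(\Phi(y),1)|^{s}} = s\int_d^\infty R^{-s-1}\mu_0(E_R)\,dR \leq c_n' C_{\Phi,w,r}\, \det(Q_0^T Q_0)^{-\frac{1}{2}(\frac{2}{r}-1)} \int_d^\infty \frac{(R^2 - d^2)^{\frac{n}{2}(\frac{2}{r}-1)}}{R^{s+1}}\,dR, \]
and the substitution $R = d\sec\theta$ turns the remaining $R$-integral into a Beta function of value $c_{n,r}\, d^{-(\frac{2}{r}-1)}$. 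Combining the factor $d^{-(\frac{2}{r}-1)}$ with the prefactor $\det(Q_0^T Q_0)^{-(\frac{2}{r}-1)/2}$ and invoking $d\sqrt{\det(Q_0^T Q_0)} = |\det Q|$ produces the bound $c_n'' C_{\Phi,w,r}\,|Q|^{-s/(n+1)}$; multiplying by $|Q|^{s/(n+1)} = |Q|^{\frac{2}{r}-1}$ and taking the supremum over $Q$ is the desired inequality.

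The main obstacle is the logarithmic divergence at both endpoints of the $R$-integral that one encounters if one only uses the naive Oberlin growth $|E_R| \sim R^n$. The hyperplane support of the lifted measure is precisely what saves matters: it forces $E_R = \emptyset$ below $R = d$ and refines the large-$R$ growth to $(R^2 - d^2)^{n/2}$, which is tuned to the choice $s = (n+1)(\tfrac{2}{r}-1)$ exactly so that the Beta integral converges and its $d^{-(\frac{2}{r}-1)}$ factor combines with the $\det(Q_0^T Q_0)^{-(\frac{2}{r}-1)/2}$ piece to reconstruct $|\det Q|^{-(\frac{2}{r}-1)}$ on the nose.
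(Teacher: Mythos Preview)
Your argument is correct and takes a genuinely different route from the paper's. The paper substitutes explicit smooth test functions $f(x)=(|Q_0(x-v)|^2+b^2)^{-\tilde r/2}$ and $g(x)=(|Q_0x|^2+b^2)^{-\tilde r/2}$ directly into \eqref{simpleradon}, derives a pointwise lower bound for $\int f(x+\Phi(y))g(x)\,dx$ by restricting to a well-chosen ellipsoid, and then reads off the geometric inequality after computing $\|f\|_r=\|g\|_r$. Your approach instead first tests on characteristic functions of translated ellipsoids to isolate the non-centered Oberlin bound $\mu_0(E)\lesssim|E|^{2/r-1}$ for the push-forward measure, and then feeds this into a layer-cake/Beta computation. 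This two-step structure is more modular and arguably more in keeping with the paper's overall philosophy of separating out Oberlin-type conditions as the fundamental objects; the paper's single-shot choice of test functions is slicker but less transparent about \emph{why} the exponent $(n+1)(\tfrac{2}{r}-1)$ appears.

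Two minor remarks. First, your closing paragraph slightly overstates the role of the hyperplane support at the large-$R$ end: since $2\alpha=n(\tfrac{2}{r}-1)<s=(n+1)(\tfrac{2}{r}-1)$, the integrand $R^{-s-1}(R^2-d^2)^{\alpha}\sim R^{2\alpha-s-1}$ already decays integrably at infinity under the naive growth $|E_R|\sim R^n$; the hyperplane support is essential only at the small-$R$ end, where it supplies $E_R=\emptyset$ for $R<d$. Second, both your constant and the paper's depend on $r$ as well as $n$ (and diverge as $r\to 2^-$); the statement's phrasing ``$c_n$ depending only on $n$'' is a little loose in either argument, so this is not a defect of your proof relative to the original.
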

\begin{proof}
Suppose $Q \in \GL(n+1,\R)$, and consider the quadratic form $|Q z|^2$ on $(z_1,\ldots,z_{n+1}) \in \R^{n+1}$.  Temporarily regard $z_{n+1}$ as fixed; as a function of $(z_1,\ldots,z_n)$, the polynomial $|Qz|^2$ is still degree two, nonnegative, and tends to infinity as $(z_1,\ldots,z_n) \rightarrow \infty$; completing the square and exploiting homogeneity leads one to the conclusion that there must exist $Q_0 \in \GL(n,\R)$, $v \in \R^n$, and a positive constant $b$ such that
\[ |Q z|^2 = |Q_0 (z_{\widehat{n+1}} - v z_{n+1})|^2 + b^2 z_{n+1}^2 \]
where $z_{\widehat{n+1}} := (z_1,\ldots,z_n)$.  Furthermore, the skew transformation $z_{\widehat{n+1}} \mapsto z_{\widehat{n+1}} + v z_{n+1}$ has determinant one, so it must be the case that $|Q| = |Q_0| b$.

Now fix $\tilde r := \frac{n+1}{r} - \frac{1}{2}$, and consider the following functions on $\R^n$:
\begin{align*}
f(x) & := ( |Q_0 (x-v)|^2 + b^2 )^{-\frac{\tilde r}{2}}, \\
g(x) & := ( |Q_0 x |^2 + b^2)^{-\frac{\tilde r}{2}}. 
\end{align*}
The first step is to estimate the integral
\[ \int_{\R^n} f(x + \Phi(y)) g(x) dx \]
from below; once this is accomplished, an estimate from above using \eqref{simpleradon} will complete the lemma.  To that end, suppose $|Q_0 x|^2 \leq \frac{1}{4}( |Q_0 (\Phi(y) - v)|^2 + b^2)$.  The triangle inequality gives
\begin{align*}
 |Q_0(x + \Phi(y) - v)|^2 + b^2 & \leq |Q_0(\Phi(y) - v)|^2 + b^2\\
& \hspace{20pt}  + |Q_0 x|^2 + 2 |Q_0 x| |Q_0(\Phi(y) - v)| \\
& \leq \frac{9}{4} \left( |Q_0(\Phi(y) - v)|^2 + b^2 \right);
\end{align*}
Letting $E := \set{x \in \R^n}{ |Q_0 x| \leq \frac{1}{2} \sqrt{ |Q_0(\Phi(y) - v)|^2 + b^2 }}$, it follows that
\begin{align*}
\int_{\R^n} f(x + \Phi(y)) g(x) dx \geq \left( \frac{2}{3} \right)^{\tilde r} \left( |Q_0(\Phi(y) - v)|^2 + b^2 \right)^{- \frac{\tilde r}{2}} \int_E \frac{dx}{(|Q_0 x|^2 + b^2)^{\frac{\tilde r}{2}}}.
\end{align*}
To estimate the integral on the right-hand side, change $x \mapsto Q_0^{-1} x$ and use spherical coordinates; the result is that
\[ \int_E \frac{dx}{(|Q_0 x|^2 + b^2)^{\frac{\tilde r}{2}}} = c_n |Q_0|^{-1} \int_0^{\frac{\sqrt{|Q_0(\Phi(y) - v)|^2 + b^2 }}{2}} t^{n-1} (t^2 + b^2)^{-\frac{\tilde r}{2}} dt. \]
If $t$ is further restricted so that $t \geq \frac{\sqrt{|Q_0(\Phi(y) - v)|^2 + b^2 }}{4}$, then $t^2 + b^2 \leq \frac{17}{16} t^2$.  All together, these imply that
\begin{equation}
 \int_{\R^n} f(x + \Phi(y)) g(x) dx \geq c_{n,\tilde r,r} |Q_0|^{-1} \left( |Q_0(\Phi(y) - v)|^2 + b^2 \right)^{\frac{n-2\tilde r}{2}} \label{lower1}
\end{equation}
(note that, since $1 \leq r < 2$, the constant $c_{n,\tilde r,r}$ may be assumed to only depend on $n$).
On the other hand, $||f||_r = ||g||_r$ and
\[ ||g||_r = \left( c_n |Q_0|^{-1} \int_0^\infty \frac{t^{n-1} dt}{(t^2 + b^2)^{\frac{r \tilde r}{2}}} \right)^{\frac{1}{r}} = c_{n,r,\tilde r} |Q_0|^{-\frac{1}{r}} b^{-\tilde r + \frac{n}{r}}, \]
so assuming that \eqref{simpleradon} holds, it follows that
\[ \int_U \frac{w(y) dy}{\left( |Q_0(\Phi(y) - v)|^2 + b^2 \right)^{\frac{n+1}{2} \left( \frac{2}{r} - 1\right)}} \leq c_n C_{\Phi,w,r} |Q_0| (|Q_0|^{-\frac{1}{r}} b^{-\tilde r + \frac{n}{r}})^2.\]
Since $-2 \tilde r + \frac{2n}{r} = 1 - \frac{2}{r}$, the right-hand side equals $c_n C_{\Phi,w,r} |Q|^{1-(2/r)}$; the lemma follows now from identifying the quantity being integrated on the right-hand side as the quantity $|Q (\Phi(y),1)|$ to the appropriate power.
\end{proof}

\subsection{The change-of-variables formula}

Section \ref{prelim} will now be concluded with a special application of the change-of-variables which will frequently be useful throughout the rest of the paper.

Suppose $U \subset \R^n \times \R^n$ is open.  For any fixed positive integer $m$, let
\[ U^* := \set{ (x,y_1,\ldots,y_m) \in \R^n \times (\R^{d})^m}{ (x,y_i) \in U, \ i=1,\ldots,m} \]
and consider the mapping $\Psi : U^* \rightarrow \R^{n+md}$ which is given by
\begin{align*}
 (x,y_1,\ldots,y_m) \stackrel{\Psi}{\mapsto} ( \rho_1(x,y_1),\ldots,\rho_m(x,y_m),  \eta(x,y_1),\ldots,\eta(x,y_m)) 
\end{align*}
where each $\rho_i$ is a $C^1$ map from $U$ into $\R^{k_i}$ with $\sum_{i=1}^m k_i = n$ and $\eta$ is a $C^1$ map from $U$ into $\R^{d}$.  The purpose of the following calculations is to record the various details associated to the application of the change-of-variables formula to this particular mapping $\Psi$.  To that end, consider first the task of computing the Jacobian determinant of $\Psi$.  In block form, the Jacobian matrix is given by
\[
\left| 
\begin{array}{ccccc}
\partial_x \rho_1(x,y_1) & \partial_y \rho_1(x,y_1) & 0 & \cdots & 0 \\
\partial_x \rho_2(x,y_2) & 0 & \partial_y \rho_2(x,y_2) & \cdots & 0 \\
\vdots & \vdots & \vdots & \ddots & \vdots \\
\partial_x \rho_m(x,y_m) & 0 & 0 & \cdots & \partial_y \rho_m(x,y_m) \\
\partial_x \eta(x,y_1) & \partial_y \eta(x,y_1) & 0 & \cdots & 0 \\
\partial_x \eta(x,y_2) & 0 & \partial_y \eta(x,y_2) & \cdots & 0 \\
\vdots & \vdots & \vdots & \ddots & \vdots \\
\partial_x \eta(x,y_m) & 0 & 0 & \cdots & \partial_y \eta(x,y_m)
\end{array}
\right|
\]
where $\partial_x \rho_1$, for example, is a $k_1 \times n$ matrix whose $(i,j)$-entry is exactly $\frac{\partial (\rho_1)_i}{\partial x_j}$.  In particular, $\partial_y \eta$ is a $d \times d$ matrix.  Assuming that this matrix is invertible, it is possible via column reduction to eliminate the blocks $\partial_x \eta(x,y_i)$.  Once this is accomplished, the matrix will be block upper-triangular with $m+1$ square blocks on the diagonal.  Of these, $m$ of them will be exactly the blocks $\partial_y \eta(x,y_i)$ for $i=1,\ldots,m$.  The remaining $n \times n$ square block is given by
\begin{equation} \left[ \begin{array}{c}
\partial_x \rho_1(x,y_1) - \partial_y \rho_1(x,y_1) (\partial_y \eta(x,y_1))^{-1} \partial_x \eta(x,y_1) \\
\vdots \\
\partial_x \rho_m(x,y_m) - \partial_y \rho_m(x,y_m) (\partial_y \eta(x,y_m))^{-1} \partial_x \eta(x,y_m)
\end{array}  \right]. \label{matrix1}
\end{equation}
Consequently, if one defines
\begin{equation}
 \Psi_j(x,y) := \partial_x \rho_j(x,y) - \partial_y \rho_j(x,y) (\partial_y \eta(x,y))^{-1} \partial_x \eta(x,y), \label{rows}
\end{equation}
it must be the case that
\[ \left| \frac{\partial \Psi}{\partial (x,y_1,\ldots,y_m)} \right| = \left| \Psi_1(x,y_1) \cdots \Psi_m(x,y_m) \right| \prod_{i=1}^m | \partial_y \eta (x,y_i) | \]
where $\left| \frac{\partial \Psi}{\partial (x,y_1,\ldots,y_m)} \right|$ is understood to be the row-wise concatenation as in \eqref{matrix1}.
%Assuming that the mapping $\Psi$ satisfies the finite multiplicity property, 
The change-of-variables formula now provides the following identity:
\begin{equation}
\begin{split}
  \int_{U^*} & \left| \frac{\partial \Psi}{\partial (x,y_1,\ldots,y_m)} \right|  \prod_{i=1}^m |f_i(\rho_i(x,y_i),\eta(x,y_i))| dx \cdots d y_m \\
 %& \leq N \prod_{i=1}^m ||f_i||_1 ||g_i||_1. \\
 & = \int N_{\Psi,U_*} (u,v) \prod_{i=1}^m |f_i(u_i,v_i)| du_1 \cdots du_m dv_1 \cdots dv_m,
 \end{split} \label{upper}
 \end{equation}
where $N_{\Psi,U^*}(u,v) = N_{\Psi,U^*}(u_1,\ldots,u_m,v_1,\ldots,v_m)$ equals the number of points $(x,y_1,\ldots,y_m) \in U^*$ for which $\Psi(x,y_1,\ldots,y_m) = (u,v)$ and the Jacobian determinant is nonvanishing. %{\bf IN SOME PLACES IT'S CALLED AN INEQUALITY DUE TO PREVIOUS FORM...}

It is worth noting at this point that if $n_i = 1$ for a particular $i$, then $\Psi_i$ will be a row vector.  By Cramer's rule,  the $j$-th entry of the vector $(\det \partial_y \eta) \Psi_i$ will be exactly equal to
\begin{equation}
 (\det \partial_y \eta) \frac{\partial \rho_i}{\partial x_j} +  \det \left[  \begin{array}{cc}  0 & \partial_y \rho_i \\ \frac{\partial \eta}{\partial x_j} & \partial_y \eta \end{array} \right]. \label{cramer}
\end{equation}

\section{Reduction to geometric integral inequalities}
\label{theorempf}
This section begins with the proof of theorem \ref{geomthm}, which is restated in a slightly stronger form below (using the left width $|\rho(E)|_L$ rather than $|\rho(E)|$):
\begin{theoremp}
Let $\rho : U \rightarrow \R$, where $U$ is an open subset of $\R^{d_l} \times \R^{d_r}$.  Fix a closed set $E \subset U$ and suppose that there exists a weight $w(x,y)$ for which the uniform geometric integral estimate 
\begin{equation} \sup_{x \in \R^{d_l}} \sup_{Q \in \GL(d_l,\R)} |Q|^{\frac{s}{d_l}} \int_{U_x} \frac{|w(x,y)| \chi_{E}(x,y) }{|Q \partial_x \rho(x,y)|^s} dy  \leq K_E  \label{geomassumpp}
\end{equation}
holds.  Then there is a constant $C$ such that 
%  Then when
%\[ E_{\epsilon,B} := \set{ (x,y) \in U}{ |\rho(x,y)| \leq \epsilon \mbox{ and } \varphi(x,y) \in B} \]
%and $\epsilon \leq \epsilon_0$, one has the uniform estimate 
\begin{equation} 
\int_E |w(x,y)|^{\frac{1}{s+1}} |f(x)g(y)| dx dy \leq C K_E^{\frac{1}{s+1}} |\rho(E)|_L^{\frac{s}{s+1}}  ||f||_{p'} ||g||_{\frac{s+1}{s}}  \label{geomineqp}
\end{equation}
 holds for all $f,g$, where \label{geomthmp}
\[ \frac{1}{p'} = 1 - \frac{1}{d_l} \frac{s}{s+1} \]
and the constant $C$ equals
 $C_{d_l} N_L^{\frac{s}{d_l(s+1)}}$ for some fixed $C_{d_l}$ depending only on dimension and the constant $N_L$ from \eqref{boundconst} which counts nondegenerate solutions of a system of equations on $U$ associated to the function $\rho$.
\end{theoremp}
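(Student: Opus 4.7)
The plan is to combine the coercive determinant functional inequality from the first lemma of Section \ref{prelim} with the change-of-variables formula from the last subsection of Section \ref{prelim}, bridged by a single application of Hölder's inequality in the $x$ variable.

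\textbf{Step 1 (Apply the determinant lemma fiberwise).} Fix $x \in \R^{d_l}$ and apply the first lemma to the map $\Phi(y) = \partial_x \rho(x,y)$ on $U_x$, with weight $|w(x,y)| \chi_E(x,y)$ and dimension $n=d_l$. The hypothesis \eqref{geomassumpp} is precisely the condition \eqref{boxes} with $K_{\Phi,w,s} \leq K_E$ uniformly in $x$. Choosing $g_1 = \cdots = g_{d_l} = g^{(s+1)/s}$ in \eqref{maindet} and taking the $d_l(s+1)/s$-th root yields, for every $x$,
\begin{equation*}
 \int g(y) |w(x,y)|^{\frac{1}{s+1}} \chi_E(x,y) \, dy \leq c_{d_l} K_E^{\frac{1}{s+1}} \left( \int_{U_x^{d_l}} |\partial_x \rho(x,y_1) \cdots \partial_x \rho(x,y_{d_l})| \prod_{j=1}^{d_l} g(y_j)^{\frac{s+1}{s}} \chi_E(x,y_j) \, dy_1 \cdots dy_{d_l} \right)^{\frac{s}{d_l(s+1)}}.
\end{equation*}

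\textbf{Step 2 (Integrate against $f$ and apply Hölder).} Multiplying by $|f(x)|$ and integrating over $x$, Hölder's inequality with dual exponents $p'$ and $p = d_l(s+1)/s$ (chosen so that $sp/[d_l(s+1)] = 1$ and $1/p' = 1 - s/[d_l(s+1)]$) gives
\begin{equation*}
 \int_E |w(x,y)|^{\frac{1}{s+1}} |f(x) g(y)| \, dx\, dy \leq c_{d_l} K_E^{\frac{1}{s+1}} \|f\|_{p'} \left( \int |\partial_x \rho(x,y_1) \cdots \partial_x \rho(x,y_{d_l})| \prod_{j=1}^{d_l} g(y_j)^{\frac{s+1}{s}} \chi_E(x,y_j) \, dx\, dy_1 \cdots dy_{d_l} \right)^{\frac{s}{d_l(s+1)}}.
\end{equation*}

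\textbf{Step 3 (Change of variables).} Apply the identity \eqref{upper} to the mapping $\Psi(x, y_1, \ldots, y_{d_l}) := (\rho(x,y_1), \ldots, \rho(x,y_{d_l}), y_1, \ldots, y_{d_l})$; that is, take $m = d_l$, $\rho_i = \rho$ (so each $k_i = 1$), and $\eta(x,y) = y$, so $\partial_y \eta = I$ and $\partial_x \eta = 0$ and hence $\Psi_j(x,y) = \partial_x \rho(x,y)$. The Jacobian of $\Psi$ equals the row-wise determinant $|\partial_x \rho(x,y_1) \cdots \partial_x \rho(x,y_{d_l})|$, and the integral above becomes
\[ \int N_{\Psi, E^*}(u_1,\ldots, u_{d_l}, v_1,\ldots, v_{d_l}) \prod_{j=1}^{d_l} g(v_j)^{\frac{s+1}{s}} \, du_1 \cdots du_{d_l} \, dv_1 \cdots dv_{d_l}, \]
where $E^*$ is the subset of $U^*$ on which all constraints $(x, y_i) \in E$ hold.

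\textbf{Step 4 (Identify the multiplicity and range).} For fixed $(v_1, \ldots, v_{d_l})$, the equation $\Psi(x, y_1, \ldots, y_{d_l}) = (u, v)$ forces $y_i = v_i$ and $\rho(x, v_i) = u_i$ with nonvanishing Jacobian exactly when $x$ lies in the transverse intersection of the hypersurfaces $\Sigma_{v_i}^{L, u_i}$; by \eqref{boundconst}, this count is at most $N_L$. Since $(x, v_i) \in E$ forces $u_i \in \rho(E^{v_i})$, the $u$-integration is restricted to a set of measure at most $|\rho(E)|_L^{d_l}$. Thus the right-hand side of Step 3 is bounded by $N_L |\rho(E)|_L^{d_l} \|g\|_{(s+1)/s}^{d_l(s+1)/s}$. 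Substituting into Step 2 and taking the power $s/[d_l(s+1)]$ collapses everything to the target inequality \eqref{geomineqp} with constant $c_{d_l} N_L^{s/[d_l(s+1)]}$.

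The main subtlety is the matching of exponents across the three inequalities (the determinant lemma, Hölder, and the multiplicity/range bound) and confirming that the nondegeneracy in the change-of-variables formula is exactly the transversality condition appearing in the definition \eqref{boundconst} of $N_L$. Once those bookkeeping items are in place the rest is essentially mechanical.
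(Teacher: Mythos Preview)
Your proof is correct and follows essentially the same approach as the paper's: both combine the coercive determinant inequality \eqref{maindet} (applied fiberwise in $x$) with the change-of-variables upper bound \eqref{upper} and the multiplicity constraint \eqref{boundconst}, differing only in the order of operations---the paper bounds the $d_l$-linear determinant integral from above and below first and then dualizes, while you invert \eqref{maindet} first and interleave H\"older. One minor bookkeeping point: in Step~1 the factors $\chi_E(x,\cdot)$ should be absorbed into the choice of $g_j$ (i.e.\ take $g_j = |g|^{(s+1)/s}\chi_E(x,\cdot)$) rather than into the weight, since in \eqref{maindet} the weight appears only on the lower-bound side.
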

\begin{proof}[Proof of theorem \ref{geomthm}.]
The proof begins with an application of \eqref{upper}.  The particular application relevant to the present situation will have $m = d_l$ and $d = d_r$.  Fix a single function $\rho$ and let $\rho_i := \rho$ for $i=1,\ldots,d_l$.  Fix $\eta := y$ as well.  Finally, let $F^y := \rho(E^y)$.  Clearly $\chi_{E}(x,y) \leq \chi_{F^y}(\rho(x,y))$.
Now \eqref{upper} and \eqref{cramer} imply
\begin{align*}
 \int_{U^*} & \! \! \left| \partial_x \rho(x,y_1) \cdots  \partial_x\rho(x,y_{d_l}) \right|  \prod_{i=1}^{d_l} \chi_{E}(x,y_i) |g(y_i)| dx dy_1 \cdots dy_{d_l} \\
 & \leq \int_{U^*} \! \! \left| \partial_x \rho(x,y_1) \cdots  \partial_x\rho(x,y_{d_l}) \right|  \prod_{i=1}^{d_l} \chi_{F^{y_i}}(\rho(x,y_i)) |g(y_i)| dx dy_1 \cdots dy_{d_l} \\
& \leq N_L \int \prod_{i=1}^{d_l} \chi_{F^{y_i}}(u_i) |g(y_i)| du_1 d y_1 \cdots du_{d_l} dy_{d_l} \leq N_L ( |\rho(E)|_L ||g||_1 )^{d_l}
\end{align*}
for the constant $N_L$ characterized by \eqref{boundconst}. 
% and any $\epsilon \leq \epsilon_0$ (where $\epsilon_0$ is implicitly specified in defining $N_L$). 
Assuming that there exists a weight $w(x,y)$ so that \eqref{geomassumpp}
holds, the inequality \eqref{maindet} and Fubini will give that
\begin{align*}
 & \int_{U^*}  \! \! \left| \partial_x \rho(x,y_1), \ldots, \partial_x\rho(x,y_{d_l}) \right|  \prod_{i=1}^{d_l} \chi_{E}(x,y_i) |g(y_i)| dx dy_1 \cdots dy_{d_l} \\
& \ \ \geq \int c_{d_l}( K_E)^{-\frac{d_l}{s}} \left( \int_{U_x} |g(y)|^{\frac{s}{s+1}} |w(x,y)|^{\frac{1}{s+1}} \chi_E(x,y) dy \right)^{\frac{d_l (s+1)}{s}} \! \! dx.
\end{align*}
The upper and lower bounds on the multilinear determinant functional (and the replacement of $|g|$ with $|g|^{\frac{s+1}{s}}$) combine to give that
\begin{align*}
 \int \left( \int_{U_x} |g(y)| |w(x,y)|^{\frac{1}{s+1}} \chi_{E}(x,y) dy \right)^{\frac{d_l (s+1)}{s}} & dx \\
 \leq C_{d_l} K_E^{\frac{d_l}{s}} N_L |\rho(E)|^{d_l} & \left( \int_{U_x} |g(y)|^{\frac{s+1}{s}} dy \right)^{d_l}. 
\end{align*}
Taking both sides to the power $\frac{s}{d_l (s+1)}$ gives the theorem by duality.
\end{proof}

Theorem \ref{geomthm} has a natural extension to the setting where the geometric integral inequality \eqref{geomassumpp} might fail to hold in general but nevertheless holds when $\partial_x \rho(x,y)$ is projected onto some lower-dimensional subspace.  This corresponds to the situation in which, for example, the Phong-Stein rotational curvature vanishes but the rotational curvature matrix nevertheless has some nondegenerate minors.

The first step is to consider a suitable definition for the projection operators which will be applied to $\partial_x \rho(x,y)$.  Let $V$ consist of the vectors $v_1,\ldots,v_{k+1} \in \R^d$.  One may assign to this collection a linear mapping $T_V$ which sends $\R^d$ into $\R^{d-k-1}$ such that
\begin{equation}
 | T_V z_1  \cdots  T_V z_{d-k-1} | = |v_1 \cdots  v_{k+1} \ z_1 \cdots z_{d-k-1}| \label{mapping}
\end{equation}
for any $z_1,\ldots,z_{d-k-1} \in \R^d$.  To accomplish this, simply choose linear functionals $\ell_1,\ldots,\ell_{d-k-1}$ which are linearly independent and annihilate $v_1,\ldots,v_{k+1}$, and let $T_V z = (\ell_1(z),\ldots,\ell_{d-k-1}(z))$.  Note that both sides of \eqref{mapping} depend only on the equivalence classes $z_i + {\mathrm{span}}\{v_1,\ldots,v_k\}$ and are (aside from the absolute values) alternating.  Thus, by the uniqueness of the determinant on $\R^{d-k-1}$, it is possible to rescale $T_V$ (as it has already been defined) by a constant to ensure that \eqref{mapping} holds.  Moreover, note that if $T_{V}$ and $T_{V}'$ both satisfy \eqref{mapping}, then $T_{V} = Q T_{V}'$ for some linear transformation $Q$ with determinant $\pm 1$.  In particular, the quantity
\[ \sup_{Q \in \GL(d-k-1,\R)} |Q|^{\frac{s}{d-k-1}} \int_U \frac{w(y)}{1+|Q T_V \Phi(y)|^{s}} dy \]
is independent of the particular choice of $T_V$.

%In what follows, it is also convenient to let $B_\delta$ represent the product of intervals
%\[ B_\delta := [-\delta_1,\delta_1] \times \cdots \times [-\delta_N,\delta_N] \]
%for any finite sequence $\delta_1,\ldots,\delta_N$.

With this definition, the appropriate generalization of theorem \ref{geomthm} may be stated as follows:
\begin{theorem}
Fix $k \geq 0$, $U \subset \R^{d_l} \times \R^{d_r}$, and smooth mappings $\rho : U \rightarrow \R$, $\varphi : U \rightarrow \R^N$ for $N \geq k$.  Let $V_{x,y} := ( \partial_x \varphi_1(x,y),\ldots,\partial_x \varphi_{k}(x,y), \partial_x \rho(x,y))$.  Suppose that there exists a weight $w(x,y)$ such that
\begin{equation} \mathop{\sup_{Q \in \GL(d_l-k-1,\R)}}_{(x,y_1) \in U} |Q|^{\frac{s}{d_l-k-1}} \int_U \frac{w(x,y) \chi_E(x,y)}{1 + |Q T_{V_{x,y_1}} \partial_x \rho(x,y)|^{\alpha}} dy \leq K_E \label{geomasmp2}
\end{equation}
for some real, fixed $\alpha$.  Let $\overline{s} := \frac{d_l-k}{d_l-k-1}s$.  Then there is some topological constant $C$ such that
\begin{equation} 
\int \! \chi_{E}\! (x,y) |w(x,y)|^{\frac{1}{\overline{s}+1}} |f(x)g(y)| dx dy \leq C  {K'_E}^{\frac{1}{\overline{s}+1}} |\rho(E)|_L^{\frac{\overline{s}}{\overline{s}+1}} %(\delta_1 \cdots \delta_k K_\delta)^{\frac{1}{s+1}} 
||f||_{p'} ||g||_{\frac{\overline{s}+1}{\overline{s}}}  \label{geomineq2}
\end{equation}
 for all $f,g$, where \label{geomthm2}
\[ \frac{1}{p'} = 1 - \frac{1}{d_l-k} \frac{\overline{s}}{\overline{s}+1}, \]
and $K'_E := (|\varphi^{(k)}(E)|_L)^{\overline{s}/(d_l-k)} K_E$.
\end{theorem}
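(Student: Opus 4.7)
The plan is to adapt the proof of Theorem \ref{geomthmp} by replacing the $x$-dimension $d_l$ with the effective dimension $d_l - k$ and by enlarging the first slot of the change-of-variables target so that the $k$ additional directions making up $V_{x, y_1}$ are absorbed into the upper-bound side.

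First I would apply the change-of-variables identity \eqref{upper} with $m := d_l - k$, $\eta(x, y) := y$, $\rho_1(x, y) := (\varphi^{(k)}(x, y), \rho(x, y)) \in \R^{k+1}$, and $\rho_j(x, y) := \rho(x, y)$ for $j = 2, \ldots, m$. Since $\eta = y$, the rows \eqref{rows} reduce to $\Psi_j = \partial_x \rho_j$, and the resulting Jacobian is precisely $|V_{x, y_1}, \partial_x \rho(x, y_2), \ldots, \partial_x \rho(x, y_m)|$, which by the defining property \eqref{mapping} of $T_V$ equals $|T_{V_{x, y_1}} \partial_x \rho(x, y_2) \cdots T_{V_{x, y_1}} \partial_x \rho(x, y_m)|$. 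The same bookkeeping as in the proof of Theorem \ref{geomthmp}, but with the additional factor $|\varphi^{(k)}(E)|_L$ coming from the enlarged image of $\rho_1$, then yields the upper bound
\[ \int_{U^*} |T_{V_{x, y_1}} \partial_x \rho(x, y_2) \cdots T_{V_{x, y_1}} \partial_x \rho(x, y_m)| \prod_{i=1}^m \chi_E |g(y_i)| \, dx \, dy_1 \cdots dy_m \leq N_L \, |\varphi^{(k)}(E)|_L \, |\rho(E)|_L^{d_l - k} \, \|g\|_1^{d_l - k}. \]

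The observation driving the lower bound is that $\tilde\Phi(y) := T_{V_{x, y_1}} \partial_x \rho(x, y)$ vanishes at $y = y_1$, since $\partial_x \rho(x, y_1)$ lies in the span annihilated by $T_{V_{x, y_1}}$. Hence for the padded map $\Phi'(y) := (\tilde\Phi(y), 1) \in \R^{d_l - k}$, cofactor expansion along the row $\Phi'(y_1) = (0, \ldots, 0, 1)$ gives $|\Phi'(y_1), \Phi'(y_2), \ldots, \Phi'(y_m)| = \pm |\tilde\Phi(y_2), \ldots, \tilde\Phi(y_m)|$, so the change-of-variables determinant is literally the $(d_l - k)$-fold multilinear determinant of $\Phi'$ evaluated at $y_1, y_2, \ldots, y_m$. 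I would then invoke Lemma 1 pointwise in $(x, y_1) \in E$ in dimension $n = d_l - k$ with parameter $\overline{s}$: with the natural choice $\alpha = \overline{s}$, the hypothesis \eqref{geomasmp2} gives the Oberlin-type rectangle condition for the associated cone measure on the diagonal ellipsoids $\{|Q' z| \leq 1\}$ with $Q' = \mathrm{diag}(Q, 1)$, because $|Q'(\tilde\Phi, 1)|^{\overline{s}} \asymp 1 + |Q \tilde\Phi|^{\overline{s}}$ and the exponents $\overline{s}/(d_l - k) = s/(d_l - k - 1)$ agree. A completing-the-square decomposition in the spirit of Lemma \ref{srlemma} then promotes this to the full family of ellipsoids for $Q' \in \GL(d_l - k, \R)$, the arbitrary translations $v \in \R^{d_l - k - 1}$ produced by the decomposition being absorbed by the uniformity of \eqref{geomasmp2} over $(x, y_1) \in U$.

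Combining the two bounds and running the Hölder duality argument from the proof of Theorem \ref{geomthmp} (with $d_l \to d_l - k$ and $s \to \overline{s}$) then yields \eqref{geomineq2} after absorbing the factor $|\varphi^{(k)}(E)|_L^{\overline{s}/(d_l - k)}$ into $K'_E$. The principal technical obstacles I anticipate are the completing-the-square bridge between the diagonal ellipsoids directly handled by \eqref{geomasmp2} and the full $\GL(d_l - k, \R)$ rectangle condition required by Lemma 1, and reconciling the fact that $\Phi'$ depends on the ``special'' integration variable $y_1$ with Lemma 1's expectation that $\Phi'$ be a fixed function of one variable; the ``$1 +$'' normalization in \eqref{geomasmp2} appears designed precisely to enable the former, and the joint uniformity in $(x, y_1) \in U$ is what should enable the latter.
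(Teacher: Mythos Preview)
Your change-of-variables setup, the upper bound, and the reduction of the Jacobian via $T_{V_{x,y_1}}$ all match the paper exactly. The gap is in the lower bound, and it is precisely the two ``technical obstacles'' you flag at the end---neither is resolvable along the lines you suggest.

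First, you cannot apply Lemma 1 in dimension $d_l-k$ to $\Phi'=(\tilde\Phi,1)$. For fixed $(x,y_1)$ the hypothesis \eqref{geomasmp2} controls only ellipsoids of the block form $Q'=\mathrm{diag}(Q,1)$; completing the square as in Lemma \ref{srlemma} produces $|Q'(\tilde\Phi,1)|^2=|Q_0(\tilde\Phi-v)|^2+b^2$, and you would need the box condition for the \emph{translated} map $\tilde\Phi-v$. Varying $y_1$ changes the linear projector $T_{V_{x,y_1}}$, not the center $v$, so the uniformity in \eqref{geomasmp2} does not absorb these translations. Second, even granting the full box condition, the map $\Phi'=\Phi'_{x,y_1}$ depends on $y_1$, so the $(d_l-k)$-fold determinant $|\Phi'(y_1)\cdots\Phi'(y_m)|$ is not a multilinear functional of a single $\Phi'$; Lemma 1 does not apply to the full $y_1,\ldots,y_m$ integral.

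The paper instead stays in dimension $d_l-k-1$. For fixed $(x,y_1)$ it applies the set version \eqref{detimprov} directly to the $(d_l-k-1)$-fold determinant $|\tilde\Phi(y_2)\cdots\tilde\Phi(y_m)|$, for which \eqref{geomasmp2} \emph{is} the required box condition (no padding, no translations needed). This yields an asymmetric lower bound in which $y_1$ contributes an $L^1$ factor while $y_2,\ldots,y_m$ each contribute an $L^{(s+1)/s}$ factor. The missing idea is then a Lorentz-space convexity (multilinear real interpolation) step that symmetrizes these exponents to the common $L^{(\overline s+1)/\overline s}$, after which the H\"older/duality argument from Theorem \ref{geomthm} closes exactly as you describe.
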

Notice that the condition \eqref{geomasmp2} differs from \eqref{geomassumpp} in two significant ways.  The first is that the parameter $\alpha$ is essentially irrelevant.  Thanks to the constant term in the denominator, the condition \eqref{geomasmp2} is a closer analogue of the Oberlin condition than is \eqref{geomassumpp} because it will be true for some $\alpha$ whenever one can uniformly control the weighted measures of the sets $\set{y}{ |Q T_{x,y_1} \partial_x \rho(x,y)| \leq 1}$.  On the other hand, the condition \eqref{geomasmp2} is slightly less general than \eqref{geomassumpp} in the sense that there is a supremum over $y_1$ as well as a supremum over $x$.  This is definitely undesirable in general (since the weight $w(x,y)$ is not allowed to depend on $y_1$, the only obvious way to find such a weight is to treat $y_1$ as a parameter and take the infimum), but it is not clear if the dependence on $y_1$ can be avoided in all cases.  It is worth noting, however, that when the cutoff functions $\varphi$ are taken to depend linearly on the coordinate functions and $\rho$ is semi-translation-invariant (for example) it is possible to arrange it so that the dependence of $T_{V_{x,y_1}} \partial_x \rho(x,y)$ on $y_1$ is essentially trivial.
\begin{proof}
Once again, the proof is based on \eqref{upper}; in this case, one will take $m := d_l - k$. Let $\rho_1(x,y) := (\rho(x,y), \varphi^{(k)}(x,y))$ (where $\varphi^{(k)}$ refers to the first $k$ coordinate directions of $\varphi$) and $\rho_i := \rho$ for $i=2,\ldots,d_l-k$.  Finally, $\eta_i := y$ again for $i=1,\ldots,d_l-k$.  As in the proof of theorem \ref{geomthm}, one may use the inequality $\chi_E(x,y_j) \leq \chi_{\rho(E^{y_j})}(\rho(x,y_j))$ for $j=2,\ldots,d_l-k$.  When $j=1$, on the other hand, a more elaborate inequality will be used.
In this case,  $\chi_E(x,y_1) \leq \chi_{\rho(E^{y_1})}(\rho(x,y_1)) \chi_{\varphi^{(k)}(E^y)}(\varphi^{(k)}(x,y))$ is used. 
By \eqref{upper}, as in theorem \ref{geomthm}, one has
\begin{align*}
 \int_{U^*} \! \!  \left| \frac{\partial \Psi}{\partial(x,y_1,\ldots,y_{d_l-k})} \right|  \prod_{j=1}^k \chi_{E}(x,y_j) & |g(y_i)| dx dy_1 \cdots dy_{d_l-k} \\
& \leq C |\varphi^{(k)}(E)|_L |\rho(E)|_L^{d_l-k} ||g||_1^{d_l-k},
\end{align*}
where
\[ \left| \frac{\partial \Psi}{\partial(x,y_1,\ldots,y_{d_l-k})} \right| = \left| \partial_x \varphi_1(x,y_1) \cdots \partial_x \varphi_k(x,y_1) \ \partial_x \rho(x,y_1) \cdots  \partial_x\rho(x,y_{d_l-k}) \right|\]
and $C$ is a topological constant depending on $\rho$, $\varphi$ and $U$.  %% Let $V_{x,y}$ be the vector space spanned by $\partial_x \varphi_1(x,y),\ldots, \partial_x \varphi_k(x,y)$ and $\partial_x \rho(x,y)$.  Clearly the alternating map
%% \[ (z_1,\ldots,z_{d-k-1} ) \mapsto \left| \partial_x \varphi_1(x,y_1) \cdots \partial_x \varphi_k(x,y_1) \ \partial_x \rho(x,y_1) \ z_1  \cdots  z_{d_l-k-1} \right|\]
%% depends only on the equivalence classes of $z_1,\ldots,z_{d_l-k-1}$ modulo $V_{x,y_1}$.
By the definition of $T_{V_{x,y_1}}$, the Jacobian determinant may be expressed as a determinant with columns $T_{V_{x,y_1}} \partial_x \rho(x,y_2)$ through $T_{V_{x,y_1}} \partial_x \rho(x,y_{d_l-k})$. The geometric integral assumption \eqref{geomasmp2} implies that measures of ellipsoids $|Q T_{V_{x,y_1}} \partial_x \rho(x,y)| \leq 1$ are bounded above by $K_E |\rho(E)| |Q|^{s/(d_l - k -1)}$.  Thus \eqref{detimprov} applies and one has
\begin{align*}
 \int_{U^*_{x,y_1}} \! \!  \left| \frac{\partial \Psi}{\partial(x,y_1,\ldots,y_{d_l-k})} \right|  \prod_{i=2}^{d_l-k}  \chi_{E_i}(x,y_j) w(x,y_i) dy_2 \cdots dy_{d_l-k} \\
\geq c (K_E)^{-\frac{d_l - k - 1}{s}} \prod_{i=2}^{d_l-k} \left( \int_{U_x} \chi_{E_i}(x,y) |w(x,y)| dy \right)^{\frac{s+1}{s}}
\end{align*}
for any subsets $E_i \subset E$
(here, of course, $U^*_{x,y_1}$ is meant to be the subset of $U^*$ on which $x$ and $y_1$ are fixed).  Multiplying both sides by $w(x,y_1)$ and integrating with respect to $y_1$ gives
\begin{align*}
\int_{U^*_x} \! \!  & \left| \frac{\partial \Psi}{\partial(x,y_1,\ldots,y_{d_l-k})} \right| \prod_{i=1}^{d_l-k} \chi_{E_{i}}(x,y_i) |w(x,y_i)|  dy_1 \cdots dy_{d_l-k} \\
& \hspace{20pt} \geq c (K_E |\rho(E)|)^{-\frac{d_l - k - 1}{s}} \left( \int_{U_x} \chi_{E_1}(x,y) |w(x,y)| dy_1 \right) \\
 & \hspace{50pt} \times \prod_{i=2}^{d_l-k} \left( \int_{U_x} \chi_{E_i}(x,y) |w(x,y)| dy \right)^{\frac{s+1}{s}}.
\end{align*}
Note that this inequality has a natural interpretation in terms of weighted Lorentz spaces: the characteristic function $\chi_{E_i}(x,y)$ may be replaced with $\chi_{E_i}(x,y) |g_i(y)|$ and the integrals on the right-hand side may be replaced with the $L^1$ norm of $g_1$ with respect to the weight $\chi_E(x,y) |w(x,y)|$ and the weighted $L^{s/(s+1),1}$ Lorentz-space norms of the remaining functions.  By the standard convexity properties of the Lorentz spaces, then, it follows that
\begin{align*}
\int_{U^*_x} \! \!  & \left| \frac{\partial \Psi}{\partial(x,y_1,\ldots,y_{d_l-k})} \right| \prod_{i=1}^{d_l-k} \chi_{E}(x,y_i) |g(y_i)| |w(x,y_i)|  dy_1 \cdots dy_{d_l-k} \\
& \hspace{20pt} \geq c (K_E |\rho(E)|)^{-\frac{d_l - k - 1}{s}} \left( \int_{U_x} \chi_{E} (x,y) |g(y)|^{\frac{\overline{s}}{\overline{s}+1}} |w(x,y)| dy_1 \right)^{\frac{(\overline{s}+1)(d_l-k)}{\overline{s}}}
\end{align*}
where $\overline{s} := \frac{d_l-k}{d_l-k-1} s$.  From here, the rest of \eqref{geomineq2} follows from the same algebraic manipulations used to establish \eqref{maindet} and \eqref{geomineq}.
%% \begin{align*}
%%  (1-\tau) \sum_{k \geq k_0} \tau^{\alpha k} b_k & \leq \tau^{-k_0(1-\alpha)} (1-\tau) \sum_{k \geq k_0} \tau^k b_k \\
%% (1-\tau) \sum_{k \leq k_0} \tau^{\alpha k} b_k & \leq \left( (1-\tau) \sum_{k \leq k_0} \tau^{k (\alpha(s+1)-s)} \right)^{\frac{1}{s+1}} \left( (1 - \tau) \sum_{k \leq k_0} \tau^k b_k^{\frac{s+1}{s}} \right)^{\frac{s}{s+1}}
%% \end{align*}
\end{proof}

\section{Analysis of geometric integral inequalities}
\label{inductionsec}

With the proofs of theorems \ref{geomthm} and \ref{geomthm2} complete, it suffices to restrict attention to the Oberlin-type curvature assumptions \eqref{geomassumpp} and \eqref{geomasmp2}.  The ultimate goal of this section is the proof of theorem \ref{gentheorem}, which allows one to build new weights out of old weights via an inductive procedure.  This is perhaps the most unexpected feature of the Oberlin-type conditions \eqref{geomassumpp} and \eqref{geomasmp2}, namely, that there are many different weight functionals which satisfy uniform estimates with only topological constants.  The downside of the inductive procedure is that the weight functionals are given only implicitly, and the dependence of the next functional on the previous functional is somewhat complex.  For this reason, section \ref{specificsec} is devoted to a series of explicit calculations giving the first few weight functionals in the series.

In what follows, it will be assumed that $\varphi$ is a mapping from $U$ into $\R^N$ for some sufficiently large $N$.  Given a set $E$ contained in $U$, it will be convenient to define $|\varphi(E)|$ as a sequence whose $j$-th entry equals $|\varphi^{(j)}(E)|,$ i.e., the $j$-dimensional Lebesgue measure of the projection of $\varphi(E)$ onto the first $j$ coordinate directions.

\begin{theorem}
For fixed $d,n$, suppose \label{gentheorem} that there is a weight functional $W_y$ such that %let $\gamma$ be some multiindex of length $N$ with nonnegative entries such that $|\gamma| = 1$ and $\gamma_i = 0$ for $i<d-n+1$.   Suppose that the inequality
\begin{equation}
 \sup_{Q \in \GL(n,\R)} |Q|^{\frac{\alpha}{n}} \int_E \frac{|W_{y} (\Phi,\varphi)|^{\beta} }{| Q \Phi(y)|^{\alpha}}  dy \leq C |\varphi(E)|^{\gamma} \label{indhyp1}
\end{equation}
holds for all smooth $\Phi : U \rightarrow \R^n$ and $\varphi : U \rightarrow \R^N$, and all closed $E \subset U \subset \R^{d+1}$ where $\alpha,\beta$ are nonnegative, $\gamma$ is some multiindex of length $N \geq d-n+2$ with nonnegative entries such that $|\gamma| = 1$ and $\gamma_i = 0$ for $i<d-n+2$,
 and $C$ is a topological constant depending on $\Phi,\varphi$ and $U$.  The weight $W_y$ will also be assumed to satisfy the property that for each $\Phi,\varphi$, $W_y(\Phi,\varphi)$ is a lower semi-continuous function of $y$.

Under the assumptions above, there exists a weight functional $W'_y$ (given by \eqref{newfunc}) defined on smooth functions $\Phi : U \rightarrow \R^{n+1}$ and $\varphi : U \rightarrow \R^{N-1}$ for $U \subset \R^{d}$ such that
\begin{equation*}  
\sup_{Q \in \GL(n+1,\R)} |Q|^{\frac{(n-1) \alpha}{n(\alpha+1)}} \int_{E} \frac{ |W'_{y} (\Phi, \varphi) |^{\frac{\beta}{\alpha+1}}}{ |Q \Phi(y)|^{\frac{(n^2-1) \alpha}{n(\alpha+1)}}} dy \leq C' |\varphi({E})|^{\gamma''} %= C' \delta |\tilde \varphi(\tilde{E})|^{\gamma''}
 \end{equation*}
for some new multiindex $\gamma''$ and a topological constant $C'$.  The new weight $W'_y$ is also lower semi-continuous in $y$ for any fixed, smooth $\Phi$ and $\varphi$.  The multiindex $\gamma''$ is obtained by taking the mean $\frac{\gamma}{\alpha+1} + \frac{\delta_{d-n+2} \alpha}{\alpha + 1}$ (where $\delta_{d-n+2}$ is the multiindex equal to $1$ in the $d-n+2$ place and zero elsewhere) and removing the first entry.
\end{theorem}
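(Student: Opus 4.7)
The plan is to combine the two conversion tools of section~\ref{prelim}: theorem~\ref{geomthm} converts a geometric integral inequality into a Radon-like operator estimate, while lemma~\ref{srlemma} converts such a Radon-like estimate back into a geometric integral inequality, raising the target dimension from $n$ to $n+1$ via the $(\Phi,1)$ construction. Applied in sequence to~\eqref{indhyp1}, these should produce a new geometric inequality that loses one dimension from the domain (via integration against an essentially-weighted coordinate of $\varphi$) and gains one dimension in the target. The exponent arithmetic already confirms feasibility: if we apply theorem~\ref{geomthm} with $s = \alpha$ and $d_l = n$ and feed the result into lemma~\ref{srlemma}, we find $2/r - 1 = (n-1)\alpha/(n(\alpha+1))$, so that $(n+1)(2/r - 1) = (n^2-1)\alpha/(n(\alpha+1))$, matching the target exponent on $|Q\Phi(y)|$ exactly, and $1/(s+1) = 1/(\alpha+1)$ matches the weight exponent $\beta/(\alpha+1)$.

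First I would interpret~\eqref{indhyp1} as the geometric assumption~\eqref{geomassumpp} of theorem~\ref{geomthm}, choosing a product structure $U = \R^{d_l} \times \R^{d_r}$ with $d_l = n$ and $d_r = d + 1 - n$, and taking $\rho := \varphi_{d-n+2}$, the first essential coordinate of $\varphi$ (since $\gamma_i = 0$ for $i < d-n+2$). The residual factors of $|\varphi(E)|^\gamma$ can be accommodated by inserting characteristic functions of sublevel sets of the remaining coordinates $\varphi_j$ for $j > d - n + 2$ into the weight $w := |W_y|^{\beta}$, or equivalently by working fiberwise in the higher coordinates. Theorem~\ref{geomthm} then yields a Radon-like inequality of the form~\eqref{geomineqp} with $L^{p'}$--$L^{(\alpha+1)/\alpha}$ exponents and an extra factor $|\varphi^{(d-n+2)}(E)|_L^{\alpha/(\alpha+1)}$ on the right.

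Second, I would apply the mechanism of lemma~\ref{srlemma} to this Radon-like inequality. Since the intermediate estimate from theorem~\ref{geomthm} is generally not translation-invariant, rather than quoting lemma~\ref{srlemma} as a black box one adapts its proof: given $Q \in \GL(n+1,\R)$, use the decomposition $|Qz|^2 = |Q_0(z_{\widehat{n+1}} - v z_{n+1})|^2 + b^2 z_{n+1}^2$ to construct the analogues of the specific test functions $f,g$ from that proof and insert them into the Radon-like estimate produced in step one. Tracking the Jacobian change-of-variables from subsection~2.3 (in particular~\eqref{matrix1} and~\eqref{cramer}) then shows that the weight $W'_y$ defined by~\eqref{newfunc} arises as $|W_y|^{\beta/(\alpha+1)}$ multiplied by the Jacobian factor that accompanies the passage from the $n$-dimensional target to the $(n+1)$-dimensional target.

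The main obstacles I anticipate are twofold. First, arranging the definition of $W'_y$ so that it is genuinely lower semi-continuous in $y$ likely requires expressing it as an essential supremum over the auxiliary parameters introduced by the product structure identification, and then verifying stability of this envelope under the perturbations admitted by the topological-constant framework. Second, the multiindex $\gamma''$ must be tracked carefully through the two conversion steps: the $\gamma/(\alpha+1)$ contribution arises from the $1/(\alpha+1)$ power in the Hölder/duality step that extracts the new weight, while the $\delta_{d-n+2}\,\alpha/(\alpha+1)$ contribution comes from the $|\varphi^{(d-n+2)}(E)|_L^{\alpha/(\alpha+1)}$ factor of~\eqref{geomineqp} specialized to $\rho = \varphi_{d-n+2}$; the removal of the first entry at the end is then a harmless relabeling reflecting that $\varphi_1$ (whose $\gamma$-coefficient vanishes) has been absorbed into the auxiliary construction and is no longer needed.
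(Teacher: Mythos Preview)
Your high-level idea (cycle geometric $\to$ Radon $\to$ geometric via the tools of section~\ref{prelim}) and your exponent bookkeeping are right, but the implementation has a genuine gap at step one. Theorem~\ref{geomthm} is the wrong converter: its output is a bilinear estimate $\int_E |w|^{1/(s+1)} f(x)g(y)\,dx\,dy$ with $f$ on $\R^{d_l}$ and $g$ on $\R^{d_r}$, not the convolution form $\int f(x+\Phi(y))g(x)w(y)\,dy\,dx$ that lemma~\ref{srlemma} requires. Your proposed product splitting $\R^{d+1}=\R^n\times\R^{d+1-n}$ with $\rho:=\varphi_{d-n+2}$ also fails to recover~\eqref{geomassumpp} from~\eqref{indhyp1}, since then $\partial_x\rho$ is a gradient of $\varphi$, not $\Phi$. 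The paper instead proves a dedicated lemma (lemma~\ref{indhalf1}): introduce a \emph{fresh} variable $x\in\R^n$, set $\eta(x,y)=(\varphi_2,\ldots,\varphi_{d-n+2},\,x+\Phi(y))$ and $\rho_i=\varphi_1$, and run the change-of-variables~\eqref{upper}. The Jacobian rows~\eqref{cramer} are exactly $D_1\Phi$, and crucially~\eqref{indhyp1} is applied to $D_1\Phi$ (not to $\Phi$) to feed~\eqref{maindet}. This is what produces a genuine convolution estimate suitable for lemma~\ref{srlemma}, and it is why the new weight in~\eqref{newfunc} is built from $W_{\lambda,y}(\lambda^{n-1}D_-\Phi,\tilde\varphi)$ rather than from $W_y(\Phi,\varphi)$ times a Jacobian.

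A second missing ingredient is the passage from $(\Phi,1)$ to general $\Phi:U\to\R^{n+1}$. Lemma~\ref{srlemma} only yields control of $|Q(\Phi,1)|$; the paper removes the appended $1$ by augmenting $y\in\R^d$ with a parameter $\lambda$, setting $\tilde\Phi(\lambda,y)=\lambda\hat\Phi(y)$ and $\tilde\varphi=(\lambda\Phi_1(y)-1,\varphi)$, running the lemma~\ref{indhalf1}/lemma~\ref{srlemma} chain on $(\tilde\Phi,\tilde\varphi)$, restricting to the slab $\tilde E_\delta=\{|\lambda\Phi_1-1|\le\delta/2\}$, and dividing by $\delta$. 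Since $|\tilde\varphi^{(j)}(\tilde E_\delta)|\le\delta\,|\varphi^{(j-1)}(E)|$ and $|\gamma'|=1$, the factor of $\delta$ cancels; Fatou and the lower semi-continuity of $W$ then pin $\lambda=1/\Phi_1(y)$ and give~\eqref{newfunc}. This slab argument is precisely the origin of ``remove the first entry of $\gamma'$''---it is not a harmless relabeling of $\varphi_1$ as you suggest, but the record of having integrated out the auxiliary $\lambda$.
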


Given a smooth mapping $\Phi$ from $U \subset \R^{d+1}$ into $\R^{n}$ (where $d + 2\geq n$) and a smooth mapping $\varphi$ from $U \rightarrow \R^{d-n+2}$, consider the following derivative map defined by duality in terms of the following determinant:
\begin{equation} \ang{D_{1} \Phi(y),z} := \det \left[ 
\begin{array}{cc}
0 & \partial_y \varphi(y) \\
z &\partial_y \Phi(y)
\end{array}
\right] \label{rotd1}
\end{equation}
where, as before $\partial_y \varphi$ is a $(d-n+2)\times (d+1)$ matrix of partial derivatives, $z$ is a column vector of length $n$, and so on.
If $\R^{n}$ is considered only to be an $n$-dimensional, oriented, real vector space, then $D_{1} \Phi$ may be immediately identified as an element of the dual space (i.e., as a row vector in this case).  The operator $D_1$ satisfies two major transformation laws of interest:  the first is that $D_1 [Q \Phi] = (\det Q) (Q^T)^{-1} D_1 \Phi$ for any $Q \in \GL(n,\R)$, and the second is that $D_1 [\Phi \circ \rho] = \left( \det \frac{\partial \rho}{\partial y} \right) ( D_1 [ \Phi] ) \circ \rho$.

The proof of theorem \ref{gentheorem} is achieved by closing a loop which begins with lemma \ref{srlemma}.  The key missing piece is the following lemma, which is something of a converse to lemma \ref{srlemma}:
\begin{lemma}
For fixed $d,n$, let $\gamma$ be some multiindex of length $N$ with nonnegative entries such that $|\gamma| = 1$ and $\gamma_i = 0$ for $i<d-n+2$.   Suppose that the inequality \eqref{indhyp1}
%% \begin{equation}
%%  \sup_{Q \in \GL(n,\R)} |Q|^{\frac{\alpha}{n}} \int_E \frac{  \chi_{B_\delta}(\varphi(y)) }{| Q \Phi(y)|^{\alpha}} |W_{y} (\Phi,\varphi)|^{\beta} dy \leq C |\varphi(E)|^{\gamma} \label{indhyp1}
%% \end{equation}
holds for all closed $E \subset U \subset \R^{d+1}$ where $\alpha,\beta$ are nonnegative %, $\gamma$ is supported away from the first $d-n+1$ indices, 
 and $C$ is a topological constant depending on $\Phi,\varphi$ and $U$.  Then for any measurable functions $f,g$ on $\R^n$, the inequality
\[ \left| \int_{\R^n} \! \! \int_E f(x + \Phi(y)) g(x) |W_y (D_1 \Phi, \varphi)|^{\frac{\beta}{\alpha+1}} dy dx \right| \leq C' |\varphi(E)|^{\gamma'} ||f||_{p} ||g||_{q} \]
holds uniformly for all $E$, where $C'$ is a new topological constant depending on $\Phi, \varphi$ and $U$.  The exponents $p,q$ satisfy
\[ \frac{1}{p} + \frac{1}{q} = 1 + \frac{n-1}{n} \frac{\alpha}{\alpha+1}, \]
and $\gamma' = \frac{\gamma}{\alpha + 1} + \frac{ \alpha \delta_{d-n+2}}{\alpha + 1}$, where $\delta_{d-n+2}$ is the multiindex which equals $1$ at the location $d-n+1$ and zero elsewhere.  In particular, $|\gamma'| = 1$, the entries of $\gamma'$ are nonnegative, and $\gamma'_i = 0$ for $i < d - n + 2$. \label{indhalf1}
\end{lemma}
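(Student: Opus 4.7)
The plan is to prove Lemma \ref{indhalf1} by running the machinery of Theorem 5 in reverse: the change-of-variables identity \eqref{upper} will be used, in combination with the multilinear determinant functional lower bound \eqref{maindet}, to extract the Radon-like inequality from the Oberlin-type hypothesis \eqref{indhyp1}. As in the proof of Theorem 5, an $n$-linear determinantal integral over $(U^*)^n$ is estimated from above via \eqref{upper} and from below via \eqref{maindet}, and the two bounds are reconciled to produce the target inequality after duality and, if needed, Riesz--Thorin interpolation.

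The technical heart of the argument is the choice of the mapping $\Psi$ in \eqref{upper}. I take $m := n$, each $k_i := 1$, and set $\rho_i(x,y_i) := \Phi_i(y_i) + x_i$, so that the translation $f(x+\Phi(y))$ appearing in the Radon-like integrand arises naturally from pulling $f$ back under $\Psi$. The auxiliary mapping $\eta(x,y_i) \in \R^{d+1}$ will be built from the first $d-n+2$ components of $\varphi$ together with an $x$-dependent completion chosen to preserve invertibility of $\partial_y\eta$. Cramer's rule \eqref{cramer} then identifies, up to the multiplicative factor $\det\partial_y\eta$, the $j$-th entry of each row $\Psi_i$ with exactly the determinantal expression \eqref{rotd1} defining $\langle D_1\Phi(y_i), e_j\rangle$. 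After absorbing the $\det\partial_y\eta$ factors, the full Jacobian determinant $|\Psi_1(x,y_1)\cdots\Psi_n(x,y_n)|\prod_i|\det\partial_y\eta(x,y_i)|$ thus aligns with $|D_1\Phi(y_1)\cdots D_1\Phi(y_n)|$.

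With this setup, the upper-bound side of \eqref{upper} is controlled by the topological multiplicity of $\Psi$ together with the widths $|\rho(E)|_L$ and $|\varphi^{(d-n+2)}(E)|_L$; the latter is precisely the source of the $\delta_{d-n+2}$ contribution in $\gamma'$. The lower bound comes from \eqref{maindet}, invoked with $s := \alpha$, weight $w := |W_y(\Phi,\varphi)|^\beta$, and target dimension $n$, where hypothesis \eqref{indhyp1} supplies $K = C|\varphi(E)|^\gamma$. Because $W_y$ is an abstract lower semi-continuous functional of its two arguments, the substitution $\Phi \mapsto D_1\Phi$ inside $W_y$ is legitimate pointwise in $y$, so the weight $|W_y(D_1\Phi,\varphi)|^{\beta/(\alpha+1)}$ arising on the Jacobian side is reconciled with the $|W_y(\cdot,\varphi)|^{\beta/(\alpha+1)}$ factor produced by \eqref{maindet}. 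Taking the appropriate root, dualizing in $f$, and applying Riesz--Thorin between the symmetric endpoint $p=q$ and the trivial bound yields the full range $\tfrac{1}{p}+\tfrac{1}{q} = 1 + \tfrac{n-1}{n}\tfrac{\alpha}{\alpha+1}$, with $\gamma' = \gamma/(\alpha+1) + \alpha\delta_{d-n+2}/(\alpha+1)$ emerging as the exponent-weighted average of the contributions from \eqref{maindet} and \eqref{upper}.

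The main obstacle is the precise choice of $\eta$: a naive choice such as $\eta = (\varphi^{(d-n+2)}, x')$ for a coordinate selection $x'$ makes $\partial_y\eta$ have zero rows and hence be singular. A workable $\eta$ must couple some of the $x$-coordinates into the $\varphi$-block in a way that preserves invertibility of $\partial_y\eta$ while ensuring the Cramer determinants in \eqref{cramer} line up exactly with the defining determinant in \eqref{rotd1}. A secondary care point is the pointwise substitution inside $W_y$, which is justified by the hypothesized lower semi-continuity together with the continuity of $D_1\Phi$ in $(\Phi,\varphi)$.
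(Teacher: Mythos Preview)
Your overall strategy---bound the $n$-fold determinantal integral from above by \eqref{upper} and from below by \eqref{maindet} using \eqref{indhyp1} applied to $D_1\Phi$, then reconcile---is exactly the paper's approach. However, your specific assignment of $\rho_i$ and $\eta$ is not workable, and the ``obstacle'' you flag is a symptom of having placed the translation in the wrong slot.

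With your choice $\rho_i(x,y_i) = x_i + \Phi_i(y_i)$, two things go wrong. First, the integrand in \eqref{upper} has the product form $\prod_i f_i(\rho_i(x,y_i),\eta(x,y_i))$, so the $i$-th factor sees only the single coordinate $x_i + \Phi_i(y_i)$ of the translation; the remaining $n-1$ coordinates of $x+\Phi(y_i)$ would have to come from $\eta(x,y_i)$, but since $\eta$ is a single map independent of $i$, this forces $\eta$ to carry essentially all of $x+\Phi$, making your $\rho_i$ redundant and $\Psi$ degenerate. Second, Cramer's formula \eqref{cramer} with $\rho_i = x_i + \Phi_i$ produces an extra term $(\det\partial_y\eta)\,\partial_{x_j}\rho_i = (\det\partial_y\eta)\,\delta_{ij}$, so the rows $\Psi_i$ do \emph{not} reduce to $D_1\Phi(y_i)$ and the Jacobian is not $|D_1\Phi(y_1)\cdots D_1\Phi(y_n)|$.

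The paper's resolution is to reverse the roles you assigned: take $\eta(x,y) := (\varphi_2(y),\ldots,\varphi_{d-n+2}(y),\, x+\Phi(y))$, which has $d+1$ components with $\partial_y\eta$ generically invertible, and set $\rho_i(x,y) := \varphi_1(y)$ for \emph{every} $i$. Now $\partial_x\rho_i = 0$, so the offending $\delta_{ij}$ term disappears and \eqref{cramer} yields exactly $\langle D_1\Phi(y_i), e_j\rangle$; the full Jacobian is $|D_1\Phi(y_1)\cdots D_1\Phi(y_n)|$. Moreover $x+\Phi(y_i)$ sits entirely inside $\eta(x,y_i)$, so the test functions $f_i$ can be taken as $|f|$ applied to those coordinates times a characteristic function of $\varphi^{(d-n+2)}(E)$, giving the upper bound $N\,\|f\|_1^n\,|\varphi^{(d-n+2)}(E)|^n$. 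The rest of your outline (lower bound via \eqref{maindet}, replace $|f|$ by $|f|^{(\alpha+1)/\alpha}$, dualize) then goes through directly; no Riesz--Thorin step is needed, since the symmetric $p=q$ endpoint already follows from the reflection $x\mapsto -x-\Phi(y)$ as in the remark preceding Lemma~\ref{srlemma}.
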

\begin{proof}
The proof is essentially a reprise of the arguments used to establish theorem \ref{geomthm}.  
Let $\eta(x,y) := (\varphi_2(y),\ldots,\varphi_{d-n+2}(y), x + \Phi(y))$ and $\rho_i(x,y) := \varphi_1(y)$ for $i=1,\ldots,n$ %; the inequality \eqref{upper} holds subject to the finite multiplicity property; in this case, the reduction to a compact set of parameters is natural---since there is translation invariance, the multiplicity of any solution must equal the multiplicity of some other solution where $x=0$.
The rows \eqref{cramer} are most easily expressed as linear functionals; in particular, they are given exactly by $D_1 \Phi$, so  
$\left|\frac{\partial \Psi}{\partial (x,y_1,\ldots,y_n)} \right| = \left| D_{1}\Phi(y_1) \cdots D_{1}\Phi(y_n) \right|$ and \eqref{upper} dictates that 
\begin{align*} 
\int_{\R^n \times E^n}  \left|\frac{\partial \Psi}{\partial (x,\ldots,y_n)} \right| \prod_{i=1}^n |f(x + \Phi(y_i))|  %\chi_{B_{\delta}}(\varphi(y_i)) & 
dx d y_1 \! \! \cdots d y_n  \leq N ||f||_1^n |\varphi^{(d-n+2)}(E)|^{n}
%\prod_{j=1}^{d-n+1} \delta_j^{n}
\end{align*}
for some topological constant $N$.
By \eqref{maindet}, the left-hand side may be bounded below using the estimate \eqref{indhyp1} applied to $D_1 \Phi$ rather than to $\Phi$ itself (and note that the structure of $D_1$ guarantees that topological constants depending on $D_1 \Phi$ are also topological constants depending on $\Phi$).
The result is that
\begin{align*}  \int \left( \int_E \right. |f(x+\Phi(y))|^{\frac{\alpha}{\alpha+1}} & \left| W_y (D_1 \Phi, \varphi) \right|^{\frac{\beta}{\alpha+1}} dy \left. \vphantom{\int} \right)^{\frac{n(\alpha+1)}{\alpha}} dx  \\ \leq  C' &  |\varphi(E)|^{\frac{n}{\alpha} \gamma} \left( ||f||_1 |\varphi^{(d-n+2)}(E)| \right)^{n}.
\end{align*} 
Replacing $|f|$ with $|f|^{\frac{\alpha+1}{\alpha}}$ and simplifying gives the conclusion of the lemma.
\end{proof}

Combining this lemma with lemma \ref{srlemma} immediately leads to the conclusion that the condition \eqref{indhyp1} implies another condition of exactly the same form with $\Phi$ replaced with $(\Phi,1)$, which maps into $\R^{n+1}$.  With a little extra work, however, one can establish the same inequality for mappings into $\R^{n+1}$ which does not make any assumption about the form, hence completing the loop.

\begin{proof}[Proof of theorem \ref{gentheorem}.]
Consider now the situation where $U \subset \R^d$ and $\Phi : U \rightarrow \R^{n+1}$.  Augment $y \in U \subset \R^d$ with an additional parameter $\lambda$ so that $(\lambda,y) \in \R \times U$, and consider
\begin{align*}
\tilde \Phi(\lambda,y) & := \lambda \hat \Phi(y) := \lambda ( \Phi_2(y),\ldots,\Phi_{n+1}(y)) , \\
\tilde \varphi(\lambda, y) & := ( \lambda \Phi_1(y)-1, \varphi_1(y),\varphi_2(y),\ldots).
\end{align*}
The pair $\tilde \Phi, \tilde \varphi$ is defined on a subset of $\R^{d+1}$ and $\tilde \Phi$ maps into $\R^n$, so the assumption \eqref{indhyp1} applies directly to $\tilde \Phi$ and $\tilde \varphi$; thus the conclusion of lemma \ref{indhalf1} applies with $\tilde \Phi, \tilde \varphi$ in place of $\Phi$ and $\varphi$, respectively.  Finally, lemma \ref{srlemma} holds by virtue of the fact that the conclusion of lemma \ref{indhalf1} is the hypothesis of lemma \ref{srlemma}.
  Consequently, it follows from the assumption \eqref{indhyp1} that
%Assume that \eqref{indhyp1} holds for mappings from $U_0 \subset \R^{d+1}$ into $\R^{n}$; this will imply a correspnding inequality for mappings from $U \subset \R^{d}$ into $\R^{n+1}$ as follows: apply the previous lemma along with lemma \ref{srlemma} to the new mappings $(\tilde \Phi, \tilde \varphi)$ (note that this pair will have finite topological constants if the original pair does as well).  Consequently, the induction hypothesis \eqref{indhyp1} (namely, that \eqref{indhyp1} holds uniformly for some topological constant $C$) implies that
\[ \sup_{Q \in \GL(n+1,\R)} \! \! |Q|^{\frac{(n-1) \alpha}{n(\alpha+1)}} \int_{\tilde E} \frac{ |W_{\lambda,y} ({D}_1 (\tilde \Phi), \tilde \varphi) |^{\frac{\beta}{\alpha+1}}}{ |Q (1,\tilde \Phi)|^{\frac{(n^2-1) \alpha}{n(\alpha+1)}}} dy d \lambda \leq C' |\tilde \varphi(\tilde E)|^{\gamma'}  \]
holds uniformly for some topological constant $C'$, every $Q \in \GL(n+1,\R)$, and every closed $\tilde E \subset \R \times U$.  Note that the notation $W_{\lambda,y}$ refers exactly to the weight function of \eqref{indhyp1} but emphasizes that the first coordinate direction of integration, $\lambda$, will be analyzed separately from the rest, $y$.
%Note that, at the moment, $I$ has $(d+1)-n+1$ initial entries which equal $1$; likewise, the first $(d+1)-n+1$ entries of $\gamma$ vanish.  
The multiindex $\gamma'$ has nonnegative entries which vanish for $i < d - n + 2$ and sum to $1$.
Note that, by the definition \eqref{rotd1} and the definitions of $\tilde \Phi$ and $\tilde \varphi$, one has
%${\tilde D}_1$ is meant to indicate the use of \eqref{rotd1} in the augmented coordinates; as a block-form determinant, 
\begin{equation}
 \ang{ {D}_1 \tilde \Phi, z} = (-1)^{d-n+1} \lambda^{n-1} \det \left[   
\begin{array}{ccc}
0 & 0 & \partial_y \varphi^{(d-n+1)}  \\
0 & \Phi_1 &\partial_y \Phi_1  \\
z & \hat \Phi & \partial_y \hat \Phi
\end{array} \label{dminus}
\right] \end{equation}
where $\partial_y \varphi^{(d-n+1)}$ is meant to consist of only $d-n+1$ rows (necessary to make the matrix square).  
Let $D_{-} \Phi$ be defined using the determinant on the right-hand side of \eqref{dminus} so that $\ang{{\tilde D}_1 \tilde \Phi, z} = (-1)^{d-n+1} \lambda^{n-1} \ang{D_{-} \Phi, z}$ for all $z \in \R^{n}$.  Thus, assuming that \eqref{indhyp1} holds, then
\[  |Q|^{\frac{(n-1) \alpha}{n(\alpha+1)}} \int_{\tilde E} \frac{ |W_{\lambda,y} (\lambda^{n-1} D_{-} \Phi, \tilde \varphi) |^{\frac{\beta}{\alpha+1}}}{ |Q (1,\tilde \Phi)|^{\frac{(n^2-1) \alpha}{n(\alpha+1)}}} dy d \lambda \leq C' |\tilde \varphi(\tilde{E})|^{\gamma'} \]
for some topological constant $C'$.  To eliminate $\lambda$, fix any closed $E \subset U$ and let $\tilde E_\delta := \set{ (\lambda,y)}{ y \in E \mbox{ and } |\lambda \Phi_1(y) - 1| \leq \frac{\delta}{2}}$.  Clearly one has for every $j$ that $|\tilde \varphi(\tilde E_\delta)|_j \leq \delta |\varphi(E)|_{j-1}$, so if $\gamma''$ equals the multiindex $\gamma'$ with the first index truncated, it follows that$|\tilde \varphi(\tilde E_\delta)|^{\gamma'} \leq \delta |\varphi(E)|^{\gamma''}$ because $|\gamma'| = 1$.  Thus
\begin{equation}  |Q|^{\frac{(n-1) \alpha}{n(\alpha+1)}} \int_{\tilde E_\delta } \frac{ |W_{\lambda,y} (\lambda^{n-1} D_{-} \Phi, \tilde \varphi) |^{\frac{\beta}{\alpha+1}}}{ |Q (1,\tilde \Phi)|^{\frac{(n^2-1) \alpha}{n(\alpha+1)}}} dy d \lambda \leq C' \delta |\varphi({E})|^{\gamma''}. %= C' \delta |\tilde \varphi(\tilde{E})|^{\gamma''}
\label{fatou0} \end{equation}
Note that, by assumption on $W_{\lambda,y}$, the function
\[ \frac{ |W_{\lambda,y} (\lambda^{n-1} D_{-} \Phi, \tilde \varphi) |^{\frac{\beta}{\alpha+1}}}{ |Q (1,\tilde \Phi)|^{\frac{(n^2-1) \alpha}{n(\alpha+1)}}} \] 
is lower semi-continuous as a function of $\lambda$ (in the event that the ratio is undefined, simply set it equal to zero at that point).  Thus it follows that, at any point $y$, the following inequality must hold:
\begin{align*}
 \liminf_{\delta \rightarrow 0^+} \frac{1}{\delta} \int \chi_{|\lambda \Phi_1(y) - 1| \leq \frac{\delta}{2}} & \frac{ |W_{\lambda,y} (\lambda^{n-1} D_{-} \Phi, \tilde \varphi) |^{\frac{\beta}{\alpha+1}}}{ |Q (1,\tilde \Phi)|^{\frac{(n^2-1) \alpha}{n(\alpha+1)}}} d \lambda \\
 \geq & \frac{1}{|\Phi_1(y)|}\left. \frac{ |W_{\lambda,y} (\lambda^{n-1} D_{-} \Phi, \tilde \varphi) |^{\frac{\beta}{\alpha+1}}}{ |Q (1,\tilde \Phi)|^{\frac{(n^2-1) \alpha}{n(\alpha+1)}}} \right|_{\lambda = \frac{1}{\Phi_1(y)}} \\
 = & \frac{|\Phi_1(y)|^{\frac{(n^2-1) \alpha}{n(\alpha+1)} - 1}}{|Q \Phi(y)|^{\frac{(n^2-1) \alpha}{n(\alpha+1)}}} \left. |W_{\lambda,y} (\lambda^{n-1} D_{-} \Phi, \tilde \varphi) |^{\frac{\beta}{\alpha+1}} \right|_{\lambda = \frac{1}{\Phi_1(y)}}.
\end{align*}
 Therefore if one defines
\begin{equation}
W'_y(\Phi,\varphi) :=  \left( |\Phi_1(y)|^{\frac{(n^2-1) \alpha}{n(\alpha+1)}-1}   \left. |W_{\lambda,y} (\lambda^{n-1} D_{-} \Phi, \tilde \varphi) |^{\frac{\beta}{\alpha+1}} \right|_{\lambda = \frac{1}{\Phi_1(y)}} \right)^{\frac{\alpha+1}{\beta}} \label{newfunc}
\end{equation}
(setting $W'_y(\Phi,\varphi) = 0$ if $|\Phi_1(y)| = 0$), dividing \eqref{fatou0} by $\delta$ and applying Fatou's lemma to the integral in $y$ guarantees that
\begin{equation*}  |Q|^{\frac{(n-1) \alpha}{n(\alpha+1)}} \int_{E} \frac{ |W'_{y} (\Phi, \varphi) |^{\frac{\beta}{\alpha+1}}}{ |Q \Phi(y)|^{\frac{(n^2-1) \alpha}{n(\alpha+1)}}} dy \leq C'  |\varphi({E})|^{\gamma''} %= C' \delta |\tilde \varphi(\tilde{E})|^{\gamma''}
 \end{equation*}
exactly as desired.  Note also that $W'_y(\Phi,\varphi)$ is lower semi-continuous as well.
\end{proof}
\section{Special geometric integral inequalities}
\label{specificsec}

This section is devoted to an explicit analysis of the consequences of theorem \ref{gentheorem}.  It begins with an essentially trivial ``base case'' for the induction argument and then derives several related weights.  The cases are labeled by ``codimension,'' which, in this case, refers to the largest possible value of $n-d$ to which each lemma applies ($n-d$ is the codimension of the image of $\Phi$).

\subsection{Codimension 1}

This first lemma establishes the geometric integral estimate necessary for the proof of theorem \ref{theorem1}.  It is essentially an application of the change-of-variables formula.
\begin{lemma}
Suppose $d \geq n -1$ and that $\Phi : U \rightarrow \R^n$ and $\varphi : U \rightarrow \R^{d-n+1}$ are both $C^1$-mappings. There there exists a topological constant $C$ (depending on $\Phi$, $\varphi$, and $U$) such that \label{codim1lemma}
\begin{equation}
\sup_{Q \in \GL(n,\R)} |Q| \int_E \frac{\left| \ang{D_1 \Phi,\Phi}(y) \right|}{ |Q \Phi(y)|^{n}} dy \leq C | \varphi^{(d-n+1)}(E)| \label{boxest1}
\end{equation}
for any closed $E \subset U$, where $D_1$ is as in \eqref{rotd1}.
% and $B$ is any measurable set.
\end{lemma}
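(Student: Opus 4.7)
The proof rests on identifying the integrand as, up to a power of an auxiliary parameter, the Jacobian determinant of a ``coning'' map. Specifically, define $\Psi : U \times (0,\infty) \to \R^{d-n+1} \times \R^n$ by $\Psi(y,\lambda) := (\varphi(y), \lambda \Phi(y))$. This is a map between spaces of equal dimension $d+1$, with Jacobian matrix $\begin{bmatrix} \partial_y \varphi & 0 \\ \lambda \partial_y \Phi & \Phi\end{bmatrix}$. Factoring $\lambda$ out of the bottom $n$ rows and compensating by scaling the last column gives
\[
\left|\frac{\partial \Psi}{\partial(y,\lambda)}\right| \;=\; \lambda^{n-1}\left|\det\begin{bmatrix} \partial_y \varphi & 0 \\ \partial_y \Phi & \Phi\end{bmatrix}\right| \;=\; \lambda^{n-1}|\langle D_1\Phi, \Phi\rangle(y)|,
\]
where the final equality follows from a column permutation matching the definition \eqref{rotd1} (the sign $(-1)^d$ disappears under absolute values).

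With this identification in hand, I would next introduce $\lambda$ into the target integral via the elementary identity $\int_0^{1/|Q\Phi(y)|}\lambda^{n-1}\,d\lambda = 1/(n|Q\Phi(y)|^n)$, so that
\[
\int_E \frac{|\langle D_1\Phi,\Phi\rangle(y)|}{|Q\Phi(y)|^n}\,dy \;=\; n \int_E \int_0^\infty \chi_{|\lambda Q \Phi(y)|\leq 1}\,\left|\frac{\partial \Psi}{\partial(y,\lambda)}\right|\,d\lambda\,dy.
\]
The right-hand side is exactly what the change-of-variables formula \eqref{upper} converts into an integral against the nondegenerate-multiplicity counting function $N_\Psi(u,v)$ of $\Psi$ restricted to $E\times(0,\infty)$. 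Under the substitution $u = \varphi(y)$, $v = \lambda\Phi(y)$, the cutoff $|\lambda Q\Phi(y)|\leq 1$ becomes $|Qv|\leq 1$, and any preimage in $E\times(0,\infty)$ satisfies $u \in \varphi(E) = \varphi^{(d-n+1)}(E)$. Moreover, $N_\Psi(u,v)$ is bounded uniformly in $(u,v)$ by a topological constant $N$ depending on $\Phi$, $\varphi$, and $U$ (it counts $y$ with $\varphi(y)=u$ and $\Phi(y)$ proportional to $v$, with nonvanishing Jacobian).

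Combining these observations, the right-hand side above is dominated by
\[
n\,N\,|\varphi^{(d-n+1)}(E)|\cdot|\{v\in\R^n : |Qv|\leq 1\}| \;=\; n\,N\,\omega_n\,\frac{|\varphi^{(d-n+1)}(E)|}{|Q|},
\]
and multiplying through by $|Q|$ gives the claim with $C = n\,N\,\omega_n$. The only substantive step is the initial determinant computation identifying $\lambda^{n-1}|\langle D_1\Phi,\Phi\rangle|$ with $|J_\Psi|$; everything else reduces to the area formula of Section \ref{prelim} and the topological bound on the nondegenerate multiplicity of the coning map, so I anticipate no real obstacle beyond that short piece of block-matrix algebra.
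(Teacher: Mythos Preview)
Your proof is correct and follows essentially the same route as the paper: both introduce the auxiliary ``coning'' variable $\lambda$, identify the Jacobian of $(y,\lambda)\mapsto(\varphi(y),\lambda\Phi(y))$ as $\lambda^{n-1}|\langle D_1\Phi,\Phi\rangle|$, express $|Q\Phi(y)|^{-n}$ as a $\lambda$-integral of the cutoff $\chi_{|\lambda Q\Phi(y)|\leq 1}$, and then invoke the area formula together with the topological bound on nondegenerate multiplicity. The only cosmetic differences are that the paper integrates $\lambda$ over all of $\R$ (picking up a harmless factor of $2$) and phrases the change of variables through its general framework \eqref{upper} rather than directly.
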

Geometrically, $\left| \ang{D_1 \Phi,\Phi}(y_0) \right| \neq 0$ if and only if $\partial_y \varphi(y_0)$ has maximal rank (i.e., $d-n+1$) at $y_0$ and the mapping $y \mapsto \frac{\Phi}{|\Phi|}$ from $U$ into $S^{n-1}$ is invertible at $y_0$ when restricted to the submanifold where $\varphi(y) = \varphi(y_0)$.  
Notice also that the weight appearing in \eqref{boxest1} satisfies the transformation laws
\begin{align*}
 \left| \ang{ D_1 Q \Phi, Q \Phi} \right| & = |Q| |\ang{D_1 \Phi, \Phi}|, \\
\left| \ang{ D_1 (\Phi \circ \phi), \Phi \circ \phi} \right| & = |\ang{D_1 \Phi, \Phi}|  \left| \frac{\partial \phi}{\partial y} \right|, 
\end{align*}
meaning that it is a geometric invariant (i.e., the weight times the measure $dy$ is invariant under reparametrization) and as well as invariant under the action of $Q \in SL(n,\R)$. 
\begin{proof}[Proof of lemma \ref{codim1lemma}.]
It suffices to assume that $\varphi$ maps into $\R^{d-n+1}$ and that $E := U \cap \varphi^{-1} (B)$ for some measurable $B$.
Let $x := \lambda \in \R$, $\eta(\lambda,y) := (\varphi_2(y),\ldots,\varphi_{d-n+1}(y),\lambda \Phi(y))$ and $\rho_1 := \varphi_{1}(y)$ and use \eqref{upper} with $m=1$.  In this case, note that the topological requirement is equivalent to the requirement that, on any submanifold where $\varphi$ is constant, the mapping $\frac{\Phi}{|\Phi|}$ has bounded nondegenerate multiplicity.  Let $C_{\Phi,\varphi,U}$ equal the maximal multiplicity.
%
 %subject to $\fmp_{(\Phi,\varphi)}$.  In particular, note that it suffices to check finite nondegenerate multiplicity for $\lambda \in [-1,1]$ (since if $(\varphi(y),\lambda \Phi(y)) = (a,b)$ has $N+1$ nondegenerate solutions in the region $\lambda \in [-L,L]$ then $(\varphi(y),\lambda \Phi(y) = (a, L^{-1} b)$ must have $N+1$ nondegenerate solutions in $[-1,1]$).  
The absolute value of the determinant \eqref{cramer} is quickly verified to equal  $|\lambda|^{n-1} \left| \ang{D_{1} \Phi(y),\Phi(y)} \right|$.  Fix any $Q \in \GL(n,\R)$.  By \eqref{upper}, then,
\[ \int_{\R} \int_U |\lambda|^{n-1} \left| \ang{D_{1} \Phi(y),\Phi(y)} \right|  \chi_{|\lambda Q \Phi(y)| \leq 1} \chi_{B}(\varphi(y)) dy d \lambda \leq \frac{C_{\Phi,\varphi,U}}{ |Q|} |B|. \]
Interchanging the order of integration by Fubini allows for an explicit computation of the $\lambda$ integral 
\[ \int_{\R}|\lambda|^{n-1} \chi_{|\lambda Q \Phi(y)| \leq 1} d \lambda =  \int_{-|Q \Phi(y)|^{-1}}^{|Q \Phi(y)|^{-1}} |\lambda|^{n-1} d \lambda = \frac{2}{n} |Q \Phi(y)|^{-n}. \]
Substituting this result gives precisely \eqref{boxest1}.
\end{proof}

Lemma \ref{codim1lemma} has a natural companion corresponding to the degenerate situation anticipated by theorem \ref{geomthm2}. Just as the combination of theorem \ref{geomthm} and lemma \ref{codim1lemma} establishes uniform $L^p$--$L^q$ estimates when the rotational curvature matrix is nondegenerate (i.e., establishes theorem \ref{theorem1}), it will be possible to combine the following lemma with theorem \ref{geomthm2} when the rotational curvature matrix is assumed to have at least some minimal rank at every point.  This corresponds to the situation of Cuccagna \cite{cuccagna1996}, who proved local $L^p$-Sobolev estimates for Fourier integral operators under the same sort of hypotheses.  More details will be given in section \ref{finalsec}.

\begin{lemma}
Suppose $U \subset \R^d$ and $\Phi : U \rightarrow \R^n$ is a smooth mapping.  Fix $j \geq 0$ and let $V := \{v_1,\ldots,v_j\}$ for vectors $v_i \in \R^n$, and suppose $\varphi : U \rightarrow \R^{N}$ for $N \geq d-n+j$.  Let \label{codim1jlemma}
\[
\widetilde{W}^1_y(\Phi,\varphi) :=  \left| \begin{array}{cccc}
0 & \cdots & 0 & \partial_y \varphi^{(d-n+j)}(y) \\
v_1 & \cdots & v_k  & \partial_{y} \Phi(y) 
\end{array} \right|. \label{codim1partial}
\] 
Then for some topological constant $C$, 
\begin{equation}
\sup_{Q \in \GL(n-j,\R)} |Q| \int_E \frac{\widetilde{W}^1_y(\Phi,\varphi)}{ 1+|Q T_V \Phi(y)|^{n-j+1}} dy \leq C | \varphi^{(d-n+k)}(E)| 
\end{equation}
holds uniformly for all $V$ and $E$.  
\end{lemma}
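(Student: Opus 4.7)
The plan is to reduce Lemma \ref{codim1jlemma} to the already-established Lemma \ref{codim1lemma} by augmenting $T_V \Phi$ with a constant first coordinate, thereby increasing its target dimension from $n-j$ to $n-j+1$. Concretely, I would set $\tilde \Phi(y) := (1, T_V \Phi(y))$, viewed as a map $U \to \R^{n-j+1}$, and use the truncated weight $\varphi^{(d-n+j)} : U \to \R^{d-n+j}$. Since $d - (n-j+1) + 1 = d - n + j$, the dimensional hypotheses of Lemma \ref{codim1lemma} are satisfied with $n$ replaced by $n-j+1$.

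Applying Lemma \ref{codim1lemma} to $(\tilde \Phi, \varphi^{(d-n+j)})$ produces
\[
\sup_{\tilde Q \in \GL(n-j+1, \R)} |\tilde Q| \int_E \frac{|\ang{D_1 \tilde \Phi, \tilde \Phi}|}{|\tilde Q \tilde \Phi|^{n-j+1}}\, dy \leq C\, |\varphi^{(d-n+j)}(E)|.
\]
To recover the form in Lemma \ref{codim1jlemma}, I restrict the supremum to block-diagonal $\tilde Q = \mathrm{diag}(1, Q)$ with $Q \in \GL(n-j, \R)$; then $|\tilde Q| = |Q|$ and $|\tilde Q \tilde \Phi|^{n-j+1} = (1 + |Q T_V \Phi|^2)^{(n-j+1)/2}$, which is comparable to $1 + |Q T_V \Phi|^{n-j+1}$ up to constants depending only on $n-j+1$. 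This supplies the denominator appearing in the claimed inequality.

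The key computational step is the determinant identity $|\ang{D_1 \tilde \Phi, \tilde \Phi}| = |\widetilde{W}^1_y(\Phi, \varphi)|$. Starting from the definition \eqref{rotd1} and noting that $\partial_y \tilde \Phi$ has a zero first row followed by $T_V \partial_y \Phi$, I would row-reduce by subtracting $(T_V \Phi)_i$ times the unique row of the form $(1, 0, \ldots, 0)$ from the $i$-th row below it, clearing the first column except for a single $1$. Cofactor expansion then collapses $\ang{D_1 \tilde \Phi, \tilde \Phi}$ to (up to sign) the $d \times d$ determinant of $\begin{pmatrix} \partial_y \varphi^{(d-n+j)} \\ T_V \partial_y \Phi \end{pmatrix}$. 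Applying Laplace on the bottom $n-j$ rows and invoking the defining property $|T_V z_1 \cdots T_V z_{n-j}| = |v_1 \cdots v_j\, z_1 \cdots z_{n-j}|$ expresses this in terms of minors of $\partial_y \Phi$ and $\partial_y \varphi^{(d-n+j)}$ weighted by minors of $[v_1 \cdots v_j]$; this sum matches the Laplace expansion of $\widetilde{W}^1_y$ along its first $j$ columns (followed by expansion along the bottom $n-j$ rows) term by term.

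The main subtle point is the uniformity of the constant in the auxiliary vectors $v_1,\ldots,v_j$: the constant produced by Lemma \ref{codim1lemma} depends a priori on $\tilde \Phi$, which in turn depends on $V$. This is handled by choosing $T_V$ via a polynomial formula in $V$ (for instance by cofactor expansion of a suitably augmented matrix), so that the resulting nondegenerate multiplicities can be counted by the Bezout-type arguments underlying the topological constant framework of the introduction, giving bounds uniform in $V$. The signs arising in the determinant reductions need to be tracked but are irrelevant for the absolute-value statement. This is the part of the argument I would expect to be the most delicate, though not technically difficult.
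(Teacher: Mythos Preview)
Your proposal is correct and follows essentially the same route as the paper: augment $T_V\Phi$ by a constant first coordinate to form $\tilde\Phi=(1,T_V\Phi)$, apply Lemma~\ref{codim1lemma} with $\tilde n=n-j+1$, identify $|\ang{D_1\tilde\Phi,\tilde\Phi}|$ with $\widetilde W^1_y(\Phi,\varphi)$, and then restrict the supremum to block-diagonal $\tilde Q$ to recover the stated denominator. Your Laplace-expansion verification of the determinant identity and your remark on the uniformity of the topological constant in $V$ are more detailed than the paper's treatment, but the underlying argument is the same.
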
 
\begin{proof}
This lemma follows immediately from its predecessor.  Fix $T_V$ as was used in theorem \ref{geomthm2}, let $\tilde \Phi(y) := (1, T_{V} \Phi(y))$, $\tilde \varphi := \varphi$, $\tilde{n} := n-j+1$, and $\tilde d := d$.  Now apply lemma \ref{codim1lemma} to all the tilde-endowed parameters and mappings.  By the definition of $D_1$, it follows that
\[ \ang{ D_1 \tilde \Phi, \tilde \Phi} = \det \left[ \begin{array}{cc}
0 & \partial_y \tilde \varphi^{(\tilde d - \tilde n + 1)} \\
1 & 0 \\
T_v \Phi & \partial_y T_v \Phi 
\end{array} \right]. \]
On the other hand, using the definition of $T_V$ and eliminating the middle row gives
\[ \left| \ang{ D_1 \tilde \Phi, \tilde \Phi} \right| = \left| \begin{array}{cccc}
0 & \cdots & 0 & \partial_y \varphi^{(d - n + j)} \\
v_1 & \cdots & v_k & \Phi 
\end{array} \right| = \widetilde{W}^1_y(\Phi,\varphi).
\]
Thus lemma \ref{codim1lemma} implies that
\begin{equation}
\sup_{\tilde Q \in \GL(\tilde n,\R)} |\tilde Q| \int_E \frac{\widetilde{W}^1_y(\Phi,\varphi)}{ |\tilde Q \tilde \Phi(y)|^{\tilde{n}}} dy \leq C | \varphi^{(d-n+j)}(E)| 
\end{equation}
holds uniformly for some topological constant $C$.  Restricting attention to matrices $\tilde Q$ such that $\tilde Q (1,w) = (1,Q w)$ for all $w \in \R^{n-j}$ and noting that $|\tilde Q \tilde \Phi|^{n-j+1} \approx 1 + |T_V \Phi|^{n-j+1}$ completes the lemma.
\end{proof}

%% It is worth noting that the expression $\ang{D_1 \Phi, D_1^2 \Phi}$ may be expressed in the following, more familiar form:
%% \begin{equation}
%% \left| \ang{ D_1 \Phi, D_1^2 \Phi } \right| := \left|
%% \begin{array}{cccc}
%%  0 & \frac{\partial \varphi}{\partial y_1} & \cdots & \frac{\partial \varphi}{\partial y_d} \\
%% \frac{\partial \varphi^T}{\partial y_1} & \ang{D_1 \Phi, \frac{\partial^2 \Phi}{\partial y_1 \partial y_1}} & \cdots & \ang{D_1 \Phi, \frac{\partial^2 \Phi}{\partial y_1 \partial y_{d}}} \\
%% \vdots & \vdots & \ddots & \vdots \\
%% \frac{\partial \varphi^T}{\partial y_d} & \ang{D_1 \Phi, \frac{\partial^2 \Phi}{\partial y_d \partial y_1}} & \cdots & \ang{D_1 \Phi, \frac{\partial^2 \Phi}{\partial y_d \partial y_{d}}}
%% \end{array}
%% \right| \label{calc1}
%% \end{equation}
%% (here $\varphi^T$ means, of course, that $\varphi$ is expressed as a row vector).  There are two main observations necessary to establish this equality.  The first is contained in the following proposition:

\subsection{Codimension 2}
This section is devoted to the explicit computation of the weight constructed by the combination of lemma \ref{codim1lemma} and theorem \ref{gentheorem}.  Already in this first nontrivial situation, the calculations are lengthy.  The following lemma will greatly simplify the work ahead:
\begin{lemma}
Let $\ell_1$ be a linear functional on $\R^n$ given by
\[ \ang{\ell_1,z} := \det \left[ \begin{array}{cccc} 0 & B_1 & \cdots & B_d \\ z & y_1 & \cdots & y_d \end{array} \right] \]
for any $z \in \R^n$ where $y_1,\ldots,y_d \in \R^n$ are column vectors and $B_1,\ldots,B_d$ are columns of length $d-n+1$.
If $\ell_2,\ldots,\ell_k$ are linear functionals on $\R^n$ expressed as row vectors and $A_2,\ldots,A_k$ are row vectors in $\R^{k-n}$, then
\begin{equation} \det  \left[ \begin{array}{cc}
0 & \ell_1 \\
A_2 & \ell_2 \\
\vdots & \vdots \\
A_k & \ell_k
\end{array}
\right] = (-1)^{k-n} \det  \left[ \begin{array}{cccc} 
0  & B_1 & \cdots & B_d \\
A_2 & \ang{\ell_2, y_1} & \cdots & \ang{\ell_2, y_d} \\
\vdots & \vdots & \ddots & \vdots \\
A_k & \ang{\ell_k, y_1} & \cdots & \ang{\ell_k, y_d}
\end{array} \right]. \label{detred}
\end{equation}
%(where $\frac{\partial \varphi}{\partial y_j}$ is a column vector of length $d-n+1$).
\end{lemma}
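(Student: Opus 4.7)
The plan is to reduce both sides of the identity to the same bilinear sum $\sum_{i=1}^{n} \det M_i\,\det L_i$, where $M_i$ is a $d\times d$ minor built from the defining data $B,Y$ and $L_i$ is a $(k-1)\times(k-1)$ minor of the left-hand matrix, and then to compare overall signs.

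First, I would expand $\det L$ along its first row $(0,\ldots,0,\ell_1)$. Since the $A$-block portion of that row vanishes, only the $n$ entries of $\ell_1$ contribute:
\[
\det L \;=\; \sum_{i=1}^{n} (-1)^{k-n+i+1}\,\ang{\ell_1,e_i}\,\det L_i,
\]
where $L_i$ denotes the $(k-1)\times(k-1)$ minor obtained by deleting row $1$ and column $k-n+i$, whose $j$-th row is $(A_j,\ell_j[\hat i])$. Next, I would substitute the defining determinant for $\ang{\ell_1,e_i}$ and cofactor-expand along its first column, whose only nonzero entry is the $1$ of $e_i$ sitting in row $d-n+1+i$. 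This yields $\ang{\ell_1,e_i}=(-1)^{d-n+i}\det M_i$ with $M_i$ the $d\times d$ matrix stacking the rows of $B$ above the rows of $Y$ with row $i$ removed. Combining produces $\det L = (-1)^{d+k+1}\sum_i \det M_i\,\det L_i$.

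For the right-hand side, observe that the bottom-right $(k-1)\times d$ block of $R$ equals $L'Y$, where $L'$ is the $(k-1)\times n$ matrix with rows $\ell_2,\ldots,\ell_k$. This suggests the factorization
\[
R \;=\; \left[\begin{array}{ccc} I_{d-n+1} & 0 & 0 \\ 0 & A & L' \end{array}\right] \cdot \left[\begin{array}{cc} 0 & B \\ I_{k-n} & 0 \\ 0 & Y \end{array}\right],
\]
whose correctness is a quick block-multiplication check. Applying the Cauchy--Binet formula, I would then observe that of the $d+k-n+1$ possible column subsets of size $d+k-n$, only $n$ contribute: omitting a column of the $I_{d-n+1}$ block of the left factor leaves a zero row, and omitting a column of the $I_{k-n}$ block of the right factor leaves a zero column in the associated row-submatrix. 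Only omission of a column in the $L'$ block survives, and for the $i$-th such subset $S_i$ block-triangularity gives $\det R_1[:,S_i]=\det L_i$, while a Laplace expansion of $\det R_2[S_i,:]$ along its $I_{k-n}$ block yields $\det M_i$ up to a sign coming from that expansion. Thus $\det R = \pm\sum_{i=1}^{n}\det L_i\,\det M_i$ with the same bilinear form as the LHS.

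Matching the two expressions shows that $\det L$ and $\det R$ coincide up to a sign obtained by combining the cofactor signs $(-1)^{k-n+i+1}$ and $(-1)^{d-n+i}$ from the two LHS expansions together with the Laplace sign from the $I_{k-n}$ block in $R_2[S_i,:]$ and the reindexing associated with the omitted column. I expect this sign bookkeeping to be the only real technical obstacle; the conceptual content---that the Cauchy--Binet factorization ``unpacks'' the determinantal definition of $\ell_1$ into the outer matrix whose entries are the pairings $\ang{\ell_j,y_t}$---is clean. Verifying the final sign $(-1)^{k-n}$ on a small case such as $d=2,\,n=1,\,k=2$ will be worthwhile to catch any parity errors introduced by indexing conventions.
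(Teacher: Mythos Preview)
Your approach is correct and genuinely different from the paper's. The paper proceeds by first column-reducing the defining matrix for $\ell_1$ to produce vectors $V_1,\ldots,V_{n-1}\in\R^d$ with $V_iB=0$ and $\ang{\ell_1,z}=\det[z,V_1y,\ldots,V_{n-1}y]$, then changes basis in the $\ell$-block of the left-hand matrix to $(z,V_1y,\ldots,V_{n-1}y)$; the first row becomes $(0,\ldots,0,\ang{\ell_1,z},0,\ldots,0)$, a single cofactor expansion peels it off, and finally the same column operations are ``reversed'' inside the right-hand matrix. Your route---Laplace-expand both sides and match via Cauchy--Binet applied to the explicit factorization $R=R_1R_2$---reaches the same bilinear form $\sum_i\det M_i\,\det L_i$ but is more mechanical: every sign is a concrete cofactor sign rather than a claim about reversibility of column operations. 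The paper's argument is shorter to state but vaguer at the last step; yours makes the bookkeeping fully explicit.

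One caution on that bookkeeping: when you carry it through you will find
\[
\det L=(-1)^{d+k+1}\sum_i\det M_i\det L_i,\qquad \det R=(-1)^{(k-n)(d-n+1)}\sum_i\det M_i\det L_i,
\]
so the overall sign relating $\det L$ to $\det R$ is $(-1)^{d+k+1+(k-n)(d-n+1)}$, which is not always $(-1)^{k-n}$. Your proposed check $d=2,n=1,k=2$ happens to land in a case where the two agree; try also $d=2,n=2,k=4$ (or $d=4,n=2,k=4$), where the discrepancy is visible. This is harmless for the paper's purposes since every application of the lemma immediately takes absolute values, but do not be surprised when your sign audit does not reproduce the stated $(-1)^{k-n}$ exactly.
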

\begin{proof}
Without loss of generality, one may assume that the dimension of the span of $B_1,\ldots,B_d$ is $d-n+1$ (if not, $\ell_1 = 0$ and the right-hand side of \eqref{detred} is clearly zero as well).  In this case, column reduction allows one to find linearly-independent vectors $V_1,\ldots,V_{n-1}$ in $\R^d$ such that
\[\ang{\ell_1,z} = \det[ z, V_1 y,\ldots, V_{n-1} y ] \]
for any $z$, where $V_i y := \sum_{j=1}^d (V_i)_j y_j$, and that $V_i B = 0$ for $i=1,\ldots,n-1$.
  The first step is to apply a transformation to the final $n$ columns of determinant on the left-hand side of \eqref{detred} equivalent to changing to the basis $z, V_1 y,\ldots, V_{n-1} y$ of $\R^n$ for an appropriate choice of $z$ (if no such $z$ exists so that this is a basis, then both sides of \eqref{detred} must again vanish because $\ell_1 = 0$ and the right-hand side may be column reduced to produce a vanishing column).  Expressing each $\ell_i$ in coordinates $(\ang{\ell_i,z}, \ang{\ell_i,V_1 y}, \ldots, \ang{\ell_i,V_{n-1} y})$ gives
\begin{align*}
 \det( z, & V_1 y ,\ldots, V_{n-1} y) 
\det  \left[ \begin{array}{cc}
0 & \ell_1 \\
A_2 & \ell_2 \\
\vdots & \vdots \\
A_k & \ell_k
\end{array}
 \right] \\
 & = \det  \left[ 
\begin{array}{ccccc} 
 0  & \ang{\ell_1,z} & \ang{\ell_1, V_1 y} & \cdots & \ang{\ell_1,V_{n-1} y} \\
A_2 & \ang{\ell_2, z} & \ang{\ell_2, V_1 y} & \cdots & \ang{\ell_2, V_{n-1} y} \\
 \vdots & \vdots & \vdots & \ddots & \vdots \\
A_k & \ang{\ell_k, z} & \ang{\ell_k, V_1 y} & \cdots & \ang{\ell_k, V_{n-1} y}
 \end{array} \right].
\end{align*}
Moreover, $\ang{\ell_1, V_j y} = 0$ for $j=1,\ldots,n-1$ since the determinant defining $\ell_1$ will have two identical columns.  Thus, the first row of the matrix above has only one nonzero entry, i.e., $\ang{\ell_1,z}$.  Expanding in this entry and canceling it on both sides gives that
\begin{align*}
\det  \left[ \begin{array}{cc}
0 & \ell_1 \\
A_2 & \ell_2 \\
\vdots & \vdots \\
A_k & \ell_k
\end{array}
 \right] = (-1)^{k-n} \det  \left[ 
\begin{array}{cccc} 
A_2 &  \ang{\ell_2, V_1 y} & \cdots & \ang{\ell_2, V_{n-1} y} \\
 \vdots &  \vdots & \ddots & \vdots \\
A_k & \ang{\ell_k, V_1 y} & \cdots & \ang{\ell_k, V_{n-1} y}
 \end{array} \right].
\end{align*}
The equality \eqref{detred} now follows by reversing the column reduction which gave rise to $V_1,\ldots,V_{n-1}$.
\end{proof}
%% To establish \eqref{calc1}, one invokes the definition of $D_1$ and takes the transpose, namely that
%% \[ \ang{D_1 \Phi, D_1^2 \Phi} = \det \left[ \begin{array}{cc}
%% 0 & D_1 \Phi \\
%% \frac{\partial \varphi^T}{\partial y_1} & \frac{\partial D_1 \Phi}{\partial y_1} \\
%% \vdots & \vdots \\
%% \frac{\partial \varphi^T}{\partial y_d} & \frac{\partial D_1 \Phi}{\partial y_d} 
%% \end{array} \right]. \]
%% To get from this equality to \eqref{calc1}, one needs only invoke \eqref{detred} and observe that $\ang{ Y_j D_1 \Phi, Y_k \Phi} = - \ang{D_1 \Phi, Y_j Y_k \Phi}$ for the vectors $Y_1,\ldots,Y_{n-1}$ as defined in the proposition above (since the determinant will have identical rows unless the derivative $Y_j$ falls on the column $Y_k \Phi$ in the definition of $D_1 \Phi$).  In particular, since $D_1 \Phi$ anihilates $Y_k \Phi$ for all $k$, $\ang{D_1 \Phi, Y_j Y_k \Phi} = \ang{D_1 \Phi, Y_k Y_j \Phi}$ and there is no problem changing back to the standard basis $\frac{\partial}{\partial y_i}$.

The combination of theorem \ref{gentheorem} and lemma \ref{codim1lemma} imply the existence of a new weight functional, which will be called $W^2$.
This is most easily expressed as the determinant of a matrix whose entries are themselves determinants:
\begin{align}
 W^2_y (\Phi, \varphi) & := \det \left[ \begin{array}{cc} 0 & \partial_y \varphi^{(d-n+2)} \\ (\partial_y \varphi^{(d-n+2)})^T & M \end{array}  \right] \label{weight2pt1} \\
M_{ij} & := \det \left[ \begin{array}{ccc} 0 & 0 & \partial_y \varphi^{(d-n+2)} \\
\frac{\partial^2 \Phi}{\partial y_i \partial y_j} & \Phi & \partial_y \Phi \end{array} \right]. \label{weight2pt2}
\end{align}
%where $\partial_y \varphi$ is a $(d - n + 2) \times d$ matrix whose columns are the derivatives of $\varphi$ in the specified direction, etc., and $M_{ij}$ is the formula for the $(i,j)$-entry of the $d \times d$ matrix $M$.
Geometrically, $W^2$ is nonzero at $y_0$ precisely when $\partial_y \varphi(y_0)$ has maximal rank ($d-n+2$) and the cone $\lambda \Phi(y), \lambda \neq 0$ has $n-2$ nonvanishing principal curvatures at $y_0$.  The combination of theorem \ref{geomthm} and \eqref{boxest2} is precisely what gives theorem \ref{theorem2}.  Likewise, theorem \ref{geomthm2} together with suitable variants of lemma \ref{codim2lemma} will combine to give estimates in the situation of varying ranks of the rotational curvature matrix together with varying numbers of nonvanishing principal curvatures.
\begin{lemma}
Suppose $d \geq n-2$ and $\varphi$ maps into $\R^N$ for $N \geq d-n+2$.  There is a weight $W^2$ such that, for some topological constant $C$, one has that \label{codim2lemma}
\begin{equation}
\sup_{Q \in \GL(n,\R)} |Q|^{\frac{n-2}{n}} \int_E \frac{\left| W^2_y (\Phi,\varphi) (y) \right|^{\frac{1}{n}}}{ |Q \Phi(y)|^{n-2}} dy \leq C |\varphi^{(d-n+2)}(E)| \label{boxest2}
\end{equation}
uniformly for smooth $\Phi : U \rightarrow \R^n$, $\varphi : U \rightarrow \R^N$, and $E \subset U \subset \R^d$. % with $U \subset \R^d$ and $N \geq d-n+2$.
%% \begin{equation}
%% \begin{split}
%% \left| \int_{\R^{n}} \int_U \right. f(x + \Phi(y)) g(x) \left| R_2 D_1 \Phi(y) \right|^{\frac{1}{n(n-1)}} & \prod_{i=1}^{d-n+1} \chi_{|\varphi_i(y)| \leq \delta_i} dy dx \left. \vphantom{\int} \! \right| \\
%%  \leq  C   ||f||_{\frac{n}{n-1}} &  ||g||_{\frac{n}{n-1}} \delta_{d-n+2}^{\frac{1}{n-1}} \prod_{i=1}^{d-n+1} \delta_i . \label{radonest2}
%% \end{split}
%% \end{equation}
\end{lemma}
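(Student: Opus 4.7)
The plan is to obtain lemma \ref{codim2lemma} by a single application of theorem \ref{gentheorem}, starting from lemma \ref{codim1lemma} as the base case. To produce a conclusion on $U \subset \R^d$ with $\Phi : U \to \R^n$, I will take the theorem's output to have target dimension $n$ and source dimension $d$, so that its hypothesis \eqref{indhyp1} applies to a mapping $\Phi_0 : U_0 \to \R^{n-1}$ on $U_0 \subset \R^{d+1}$ --- which is precisely lemma \ref{codim1lemma} in dimensions $(d+1, n-1)$. The precondition $d+1 \geq n-2$ of the base case is automatic under the hypothesis $d \geq n-2$. Reading off the parameters, we have $\alpha = n-1$ (from $|Q|^{\alpha/(n-1)} = |Q|$ and $|Q\Phi|^\alpha = |Q\Phi|^{n-1}$), $\beta = 1$, and $\gamma = \delta_{d-n+3}$. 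Substituting these into the theorem's conclusion gives exactly the exponents in \eqref{boxest2}: $(n-2)(n-1)/((n-1)n) = (n-2)/n$ on $|Q|$, $((n-1)^2-1)/n = n-2$ on $|Q\Phi|$, $\beta/(\alpha+1) = 1/n$ on the new weight, and a new multiindex $\gamma''$ concentrated at position $d-n+2$, producing $|\varphi^{(d-n+2)}(E)|$ on the right-hand side.

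It remains to identify the implicit weight $W'_y$ generated by \eqref{newfunc} with the explicit $W^2_y$ of \eqref{weight2pt1}--\eqref{weight2pt2}. Using \eqref{dminus} in the form $D_1 \tilde\Phi = (-1)^{d-n+2} \lambda^{n-2} D_-\Phi$, the base-case weight $W_{\lambda,y}(\lambda^{n-2} D_-\Phi, \tilde\varphi)$ becomes a $(d+2) \times (d+2)$ determinant whose column arising from $\partial_\lambda \tilde\varphi$ has only one nonzero entry, namely $\Phi_1$ in the row coming from $\lambda\Phi_1 - 1$. Subtracting $(n-2)/\lambda$ times the first column from the second to clear the bottom $n-1$ entries of that column, expanding along column 2, factoring $\lambda^{(n-1)(n-2)}$ from the rows containing $\lambda^{n-2} D_-\Phi$, substituting $\lambda = 1/\Phi_1$, and raising to the outer exponent $(\alpha+1)/\beta = n$ collapses the formula to
\[
W'_y = |\Phi_1|^{-1} \left| \det \begin{pmatrix} 0 & \partial_y \varphi^{(d-n+2)} \\ D_-\Phi & \partial_y D_-\Phi \end{pmatrix} \right|.
\]
The determinant reduction lemma (proved immediately before lemma \ref{codim2lemma}) will then be applied to each of the $n-1$ rows coming from $D_-\Phi$: the $j$-th such row is $((D_-\Phi)_j, \partial_y(D_-\Phi)_j)$, and $(D_-\Phi)_j$ is itself a determinant in which $j$ enters only through a single column $e_j$. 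The lemma folds all these rows into a single larger determinant, and multilinearity of the determinant applied to $\partial_{y_i}(D_-\Phi)_j$ generates entries involving $\partial^2_{y_i y_k}\Phi$ that reorganize into the matrix $M$ of \eqref{weight2pt2}; a hidden factor of $\Phi_1$ emerges in the process and cancels the $|\Phi_1|^{-1}$, leaving $W^2_y$.

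The main obstacle will be this final combinatorial reorganization: tracking signs and matching the product-rule terms to the precise block structure of \eqref{weight2pt1}--\eqref{weight2pt2} rather than some other linear combination of minors of the same matrix. A sanity check in the smallest nontrivial case $n=3$, $d=2$ --- for example with $\Phi(y) = (y_1, y_2, y_1^2 + y_2^2)$ and $\varphi(y) = y_1$ --- gives $|W^2_y| = 2y_1$ and $|\Phi_1|^{-1}|\det(\cdots)| = 2y_1$, confirming that the identification holds exactly (not just up to proportionality). Lower semi-continuity of $W^2_y$ in $y$, required to invoke theorem \ref{gentheorem}, is automatic since $W^2_y$ is polynomial in the derivatives of $\Phi$ and $\varphi$, and the topological nature of the constant is inherited from lemma \ref{codim1lemma} together with the constants introduced by theorem \ref{gentheorem}.
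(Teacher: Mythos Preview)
Your high-level strategy matches the paper exactly: apply theorem \ref{gentheorem} once to lemma \ref{codim1lemma} in the shifted dimensions, read off the exponents, and then identify the implicitly-defined weight $W'_y$ with the explicit $W^2_y$. Your bookkeeping of $\alpha,\beta,\gamma$ and the resulting exponents is correct, and your intermediate formula
\[
W'_y \;=\; |\Phi_1|^{-1}\,\Bigl|\det\begin{pmatrix}0 & \partial_y\varphi^{(d-n+2)}\\ D_{-}\Phi & \partial_y D_{-}\Phi\end{pmatrix}\Bigr|
\]
is right (your column-reduction argument to clear $\partial_\lambda$ is a valid alternative to the paper's use of the homogeneity $W^1(f\Psi,\varphi)=f^{n-1}W^1(\Psi,\varphi)$).

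The gap is in the last step --- identifying this with $W^2_y$ --- where your plan does not work as stated. You propose to apply the reduction lemma \eqref{detred} ``to each of the $n-1$ rows coming from $D_-\Phi$,'' but those rows have the form $\bigl((D_-\Phi)_j,\partial_y(D_-\Phi)_j\bigr)$ with a nonzero first entry, so they are not of the shape $(0,\ell_1)$ that \eqref{detred} requires. The paper instead \emph{transposes} the matrix, so that the single row $(0,(D_-\Phi)^T)$ appears with $\ell_1:=D_-\Phi$ a determinant-defined functional on $\R^{n-1}$, and applies \eqref{detred} \emph{once}; the remaining rows become $\ell_{1+i}=\partial_{y_i}D_-\Phi$ with $A_{1+i}=(\partial_{y_i}\varphi)^T$. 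This produces the large matrix \eqref{detthing}. After that, three further ingredients are needed that your sketch does not supply: (i) a row operation using the $\Phi_1$ row converts $D_-$ to the symmetric $D$ of \eqref{rotd3}; (ii) the identities $\langle D\Phi,\Phi\rangle=0$ and $\langle Y_iD\Phi,\Phi\rangle=0$ (for $Y_i$ tangent to the level sets of $\varphi$) are what force the second column to collapse to the single entry $\Phi_1$ --- this is where your ``hidden factor of $\Phi_1$'' actually comes from, giving $\Phi_1^2$ total and cancelling $|\Phi_1|^{-1}$ only after combining with the first expansion; (iii) the antisymmetry argument giving $\langle Y_iD\Phi,Y_j\Phi\rangle=-\langle D\Phi,Y_iY_j\Phi\rangle$ is what turns the remaining entries into the $M_{ij}$ of \eqref{weight2pt2}. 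None of these are automatic ``product-rule reorganizations''; they are the substance of the computation, and your sanity check at $n=3$, $d=2$ (while correct) is too low-dimensional to detect whether your proposed mechanism would actually produce $W^2_y$ in general.
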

\begin{proof}[Proof of lemma \ref{codim2lemma}.]
The estimate \eqref{boxest2} will follow immediately from theorem \ref{gentheorem} as soon as it can be established that $W^2$ is the weight functional foretold by that theorem.  Furthermore, the concerned reader may note that the hypotheses of this lemma are that $d \geq n-2$, while lemma \ref{codim1lemma} applies when $d \geq n-1$; in fact, the establishment of this lemma for minimal choice of $d$  follows immediately by augmenting $y$ to become $(y,z)$ for $z \in \R^{M}$ for some large $M$ and adding the coordinate functions of $z$ to the cutoff mapping $\varphi$.  Since $\Phi$ does not depend on $z$, it is easy to check that the weight $W^{2}_{y,z}$ reduces exactly to $W^2_y$ in this case, and the estimate \eqref{boxest2} will hold by extending $E \subset U \subset \R^d$ to $E \times [0,1]^M$.

By virtue of \eqref{newfunc}, the next weight is obtained by examining the expression $W^{1}_{\lambda,y} (\lambda^{n-1} D_{-} \Phi, \tilde \varphi)$, where $W^1_y$
%% \[ W^1_y(\Phi,\varphi) := \det \left[ \begin{array}{cc}  0 & \partial_y \varphi^{(d-n+1)} \\
%% \Phi & \partial_y \Phi \end{array} \right]
%% \]
is the weight appearing in \eqref{boxest1}. %(where here the first $d-n+1$ entries of $\varphi$ are used when $\Phi$ maps into $\R^n$).
For simplicity, assume from this point forward that $N = d-n+1$ so that the superscript on $\varphi^{(d-n+1)}$ may be taken implicitly.
  Observe that $W^1 (f \Phi,\varphi) = f^{n} W^1 (\Phi,\varphi)$ for any continuously differentiable function $f$, so that the factor $\lambda^{n-1}$ may be pulled out to become $\lambda^{n(n-1)}$, i.e.,  $W^{1}_{\lambda,y} (\lambda^{n-1} D_{-} \Phi, \tilde \varphi) = \lambda^{n(n-1)} W^{1}_{\lambda,y} (D_{-} \Phi, \tilde \varphi)$. Now the determinant $W^1_{\lambda,y}( D_{-} \Phi, \tilde \varphi)$ has the property that its second column (corresponding to differentiation with respect to $\lambda$) has only one non-zero entry (corresponding to differentiation of $\tilde \varphi_1 = \lambda \Phi_1 - 1$).  Expanding the determinant in this second column gives
\begin{align*}
 |W^1_{\lambda,y} ( \lambda^{n-1} D_{-} \Phi,\tilde \varphi)| = |\lambda^{n(n-1)} \Phi_1(y) W^1_{y} (D_{-} \Phi, \varphi)|.
\end{align*}
Next, one may expand $W^1_{y} (D_{-} \Phi,\varphi)$ according to \eqref{detred} and the definition of $D_{-} \Phi$.  In particular, $W^1_{y} (D_{-} \Phi,\varphi)$ equals the determinant of the following matrix (up to a sign):
\begin{equation}
\left[ \begin{array}{ccccc}
0 & 0 & \frac{\partial \varphi}{\partial y_1} & \cdots & \frac{\partial \varphi}{\partial y_d} \\
0 & \Phi_1 & \frac{\partial \Phi_1}{\partial y_1} & \cdots & \frac{\partial \Phi_1}{\partial y_d} \\
\frac{\partial \varphi^T}{\partial y_1} & \ang{ \frac{\partial}{\partial y_1} D_{-} \Phi, \hat \Phi} & \ang{ \frac{\partial}{\partial y_1} D_{-} \Phi, \frac{\partial}{\partial y_1} \hat \Phi} & \cdots & \ang{ \frac{\partial}{\partial y_1} D_{-} \Phi, \frac{\partial}{\partial y_d} \hat \Phi}  \\
\vdots & \vdots & \vdots & \ddots & \vdots \\
\frac{\partial \varphi^T}{\partial y_d} & \ang{ \frac{\partial}{\partial y_d} D_{-} \Phi, \hat \Phi} & \ang{ \frac{\partial}{\partial y_d} D_{-} \Phi, \frac{\partial}{\partial y_1} \hat \Phi} & \cdots & \ang{ \frac{\partial}{\partial y_d} D_{-} \Phi, \frac{\partial}{\partial y_d} \hat \Phi}
\end{array} \right].
\label{detthing}
\end{equation}
%(where, in all instances of $\varphi$, only the first $d-n+1$ entries of $\varphi$ are used). 
Multiply the row
\[ \left[ \begin{array}{ccccc} 0 & \Phi_1 & \frac{\partial \Phi_1}{\partial y_1} & \cdots & \frac{\partial \Phi_1}{\partial y_d} \end{array} \right] \]
by 
\[ \frac{\partial}{\partial y_i} \det \left[ \begin{array}{ccc}
0 & 0 & \partial_y \varphi \\
1 & \Phi_1 & \partial_y \Phi_1 \\
0 & \hat \Phi & \partial_y \hat \Phi
\end{array} \right] \]
and add the result to the lower rows (use the corresponding derivatives in $y$ for the corresponding lower rows).  This operation does not change the determinant; the result of this manipulation gives a matrix of exactly the same form as \eqref{detthing} whose lowest rows are given by 
\[ \left[ \begin{array}{ccccc} \frac{\partial \varphi^T}{\partial y_i} & \ang{ \frac{\partial}{\partial y_i} D \Phi, \Phi} & \ang{ \frac{\partial}{\partial y_i} D \Phi, \frac{\partial}{\partial y_1} \Phi} & \cdots & \ang{ \frac{\partial}{\partial y_i} D \Phi, \frac{\partial}{\partial y_d} \Phi} \end{array} \right] \]
for $i=1,\ldots,d$, where 
where
\begin{equation}
\ang{D \Phi,z} := \det \left[ \begin{array}{ccc} 0 & 0 & \partial_y \varphi \\ z & \Phi & \partial_y \Phi \end{array} \right]. \label{rotd3}
\end{equation}
Next, note that
\[ 0 = \det \left[ \begin{array}{ccc}
0 & 0 & \partial_y \varphi \\
\Phi & \Phi & \partial_y \Phi
\end{array} \right] = \ang{D \Phi, \Phi}%%  + \Phi_1 \det \left[ \begin{array}{ccc}
%% 0 & 0 & \partial_y \varphi_y \\
%% 1 & \Phi_1 & \partial_y \Phi_1 \\
%% 0 & \hat \Phi &  \partial_y \hat \Phi
%% \end{array} \right]
\]
and that, for any directional derivative $Y$ which annihilates $\varphi$,
\[ 0 = \det \left[ \begin{array}{ccc}
0 & 0 & \partial_y \varphi \\
Y_i \Phi & \Phi & \partial_y \Phi
\end{array} \right] = \ang{D \Phi, Y_i \Phi}. %% + Y_i \Phi_1 \det \left[ \begin{array}{ccc}
%% 0 & 0 & \partial_y \varphi \\
%% 1 & \Phi_1 & \partial_y \Phi_1 \\
%% 0 & \hat \Phi & \partial_y \hat \Phi
%% \end{array} \right]
%% .
\]
Differentiating the former equality with respect to $Y_i$ and subtracting the latter gives $\ang{Y_i D \Phi, \Phi} = 0$,
%% \[ 0 =  \ang{Y_i D \Phi, \Phi} + \Phi_1 Y_i \det \left[ \begin{array}{ccc}
%% 0 & 0 & \partial_y \varphi \\
%% 1 & \Phi_1 & \partial_y \Phi_1 \\
%% 0 & \hat \Phi & \partial_y \hat \Phi
%% \end{array} \right], \]
which demonstrates that the vector \[\left( \ang{\frac{\partial}{\partial y_1} D \Phi, \Phi}, \ldots, \ang{\frac{\partial}{\partial y_d} D \Phi, \Phi} \right)\] must lie in the span of the gradients of the $\varphi$'s, so it is possible by row reduction to eliminate all entries of the second column except for $\Phi_1$.  So again the determinant may be expanded in this trivial column, and the conclusion is that $|W^1_{\lambda,y} (\lambda^{n-1} D_{-} \Phi, \tilde \varphi)|$ equals 
\[ |\lambda^{n(n-1)} (\Phi_1(y))^2| 
\left[ \begin{array}{cccc}
0  & \frac{\partial \varphi}{\partial y_1} & \cdots & \frac{\partial \varphi}{\partial y_d} \\
\frac{\partial \varphi^T}{\partial y_1} & \ang{ \frac{\partial}{\partial y_1} D \Phi, \frac{\partial}{\partial y_1}  \Phi} & \cdots & \ang{ \frac{\partial}{\partial y_1} D \Phi, \frac{\partial}{\partial y_d} \Phi}  \\
\vdots &  \vdots & \ddots & \vdots \\
\frac{\partial \varphi^T}{\partial y_d}  & \ang{ \frac{\partial}{\partial y_d} D \Phi, \frac{\partial}{\partial y_1}  \Phi} & \cdots & \ang{ \frac{\partial}{\partial y_d} D \Phi, \frac{\partial}{\partial y_d}  \Phi}
\end{array} \right]. \]
  %The price paid is to replace a general term $\ang{ \frac{\partial}{\partial y_i} D_{-} \Phi, \frac{\partial}{\partial y_j} \hat \Phi}$ with $\ang{ \frac{\partial}{\partial y_i} D \Phi, \frac{\partial}{\partial y_j}  \Phi}$
Finally, note that $\ang{ Y_i D \Phi, Y_j \Phi} =  \ang{ D \Phi, Y_i Y_j \Phi}$.  To see this, express $\ang{ Y_i D \Phi, Y_j \Phi}$ as a sum of determinants in which the derivative $Y_i$ falls on the various columns (excluding the first) appearing in \eqref{rotd3}.  Unless the derivative $Y_i$ happens to fall on the column of the determinant \eqref{rotd3} corresponding to the derivative in the $Y_j$ direction, the determinant will vanish. so by antisymmetry, $\ang{ Y_i D \Phi, Y_j \Phi} = - \ang{ D \Phi, Y_i Y_j \Phi}$.

The result of all these calculations is that
\[ |W^1_{\lambda,y} (\lambda^{n-1} D_{-} \Phi, \tilde \varphi)| = |\lambda^{n(n-1)} (\Phi_1(y))^2| |W^2_y (\Phi,\varphi)|. \]
Substituting into \eqref{newfunc} gives that the new weight $W'$ will equal
\[ |W'| = |W^2_y (\Phi,\varphi)| |\Phi_1(y)|^{-n(n-1)+2} (|\Phi_1(y)|^{\frac{(n^2-1)n}{n(n+1)} - 1} )^{\frac{n+1}{1}} = |W^2_y (\Phi,\varphi)|. \]
This new weight, of course, applies when $\Phi$ maps into $\R^{n+1}$.  Shifting the index $n$ down to $n-1$ establishes \eqref{boxest2} by virtue of theorem \ref{gentheorem}.
\end{proof}

%Geometrically, the weight $W^2_y(\Phi,\varphi)$ is nonvanishing precisely when the cones
%\[ \set{ \lambda \Phi(y) \in \R^{n}}{ \lambda \neq 0, \varphi_i(y) = c_i \ i=1,\ldots,d-n+2} \]
%have $n-2$ nonvanishing principal curvatures at a particular point (which is the maximum number which may be simultaneously nonzero).  
In analogy with the work of Greenleaf and Seeger \cite{gs1994}, \cite{gs1998}, it is possible to generalize the inequality \eqref{boxest2} to the situation where only $n-k+2$ of the curvatures are nonvanishing;  to preserve uniformity, though, it is necessary to assume that the cones have only a finite extent in the non-curved directions (excluding, of course, the distinguished radial direction).  The following lemma makes this idea precise. 
\begin{lemma}
Suppose $\Phi : U \rightarrow \R^{n-1}$ and $\varphi : U \rightarrow \R^{d-n+2}$ are $C^2$ functions.  Fix an integer $k$ between $0$ and $n-2$ (inclusive) and consider the weight $\widetilde{W}^2$ given by \label{codim2klemma}
\begin{align}
 {\widetilde{W}}^2_y (\Phi, \varphi) & := \det \left[ \begin{array}{ccc} 0 & 0 & \partial_y \varphi  \\
0 & 0 & \partial_y \Phi^{(k)} \\
(\partial_y \varphi)^T & (\partial_y \Phi^{(k)})^T & \tilde M \end{array}  \right], \label{weight2kpt1} \\
{\tilde M}_{ij} & := \det \left[ \begin{array}{ccc} 0  & \partial_y \varphi \\
\frac{\partial^2 \Phi}{\partial y_i \partial y_j} & \partial_y \Phi \end{array} \right]. \label{weight2kpt2}
\end{align}
  Then
\begin{equation}
\begin{split}
  \sup_{Q \in \GL(n,\R)} \! |Q|^{\frac{n-k-2}{n-k}} \! \int_E  \frac{|\widetilde{W}^2_y(\Phi,\varphi)|^{\frac{1}{n-k}} dy }{|Q(\Phi(y),1)|^{\frac{n(n-k-2)}{n-k}}}    
\leq C' |\Phi^{(k)}(E)|^{\frac{2}{n-k}}  & |\varphi(E)| \end{split}
\label{weight2k}
\end{equation}
for some topological constant $C'$, uniformly in $\Phi$,$\varphi$, and $E$.
\end{lemma}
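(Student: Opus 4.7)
The plan is to deduce Lemma \ref{codim2klemma} from Lemma \ref{codim2lemma} by an auxiliary construction in the same spirit as the reduction of Lemma \ref{codim1jlemma} to Lemma \ref{codim1lemma}: the $k$ ``non-curved'' directions of $\Phi$ are folded into the cutoff mapping, and the remaining coordinates of $\Phi$, augmented by an adjoined constant $1$, play the role of the codim-$2$ map. Concretely, let $T_V : \R^{n-1} \to \R^{n-1-k}$ be the projection onto the last $n-1-k$ coordinates (take $V := \{e_1, \ldots, e_k\}$), and set
\[ \tilde\Phi(y) := (1, T_V \Phi(y)) \in \R^{n-k}, \qquad \tilde\varphi(y) := (\varphi(y), \Phi^{(k)}(y)) \in \R^{d-n+2+k}. \]
These satisfy the hypotheses of Lemma \ref{codim2lemma} with $\tilde n := n-k$ and $\tilde d := d$, and I would apply that lemma directly to obtain
\[ \sup_{\tilde Q \in \GL(n-k,\R)} |\tilde Q|^{\frac{n-k-2}{n-k}} \int_E \frac{|W^2_y(\tilde\Phi,\tilde\varphi)|^{1/(n-k)}}{|\tilde Q \tilde\Phi|^{n-k-2}}\, dy \leq C|\tilde\varphi(E)|. \]

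The second step identifies the weight. Since the first coordinate of $\tilde\Phi$ is the constant $1$, expanding the corresponding row of $M_{ij}$ in \eqref{weight2pt2} eliminates that row together with the $\tilde\Phi$-column. Combined with the decomposition $\partial_y \tilde\varphi = (\partial_y \varphi;\, \partial_y \Phi^{(k)})$, and performing row and column operations inside the outer determinant \eqref{weight2pt1} using the $(\partial_y \Phi^{(k)})^T$ columns already present there in the manner of \eqref{detred}, the expansion should yield $W^2_y(\tilde\Phi, \tilde\varphi) = \pm \tilde W^2_y(\Phi, \varphi)$, matching \eqref{weight2kpt1}--\eqref{weight2kpt2}.

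The third step converts the $\GL(n-k,\R)$-supremum over $\tilde Q$ into a $\GL(n,\R)$-supremum over $Q$ acting on $(\Phi,1) \in \R^n$, imitating the final paragraph of Lemma \ref{codim1jlemma}'s proof. A generic $Q \in \GL(n,\R)$ admits a block decomposition via the completing-the-square argument of Lemma \ref{srlemma} relative to the subspace $\mathrm{span}(V)$ and its complement; choosing $\tilde Q$ to match the action of $Q$ on the complement translates $|\tilde Q\tilde \Phi|^{n-k-2}$ into a suitable power of $|Q(\Phi,1)|$. The exponent shift from $n-k-2$ to $n(n-k-2)/(n-k)$ together with the $|\Phi^{(k)}(E)|^{2/(n-k)}$ factor on the right are then produced by a rescaling of $Q$ in the $V$-directions, after which $|(\varphi, \Phi^{(k)})(E)|$ on the right-hand side is bounded by a weighted combination of $|\varphi(E)|$ and $|\Phi^{(k)}(E)|$ designed to match the exponents in \eqref{weight2k}.

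The main obstacle will be the determinant identification in the second step: expanding $M_{ij}$ straightforwardly produces an inner block that differs from $\tilde M_{ij}$ of \eqref{weight2kpt2} by terms involving the Hessians $\Phi^{(k)}_{y_i y_j}$, and showing that these discrepancies cancel inside the outer determinant requires carefully exploiting the dual appearance of $\partial_y \Phi^{(k)}$ both in the inner block (through $\tilde\varphi$) and in the surrounding $(\partial_y \Phi^{(k)})^T$ columns. A secondary obstacle lies in the third step, where arranging the precise exponent bookkeeping to land on exactly $|\Phi^{(k)}(E)|^{2/(n-k)} |\varphi(E)|$ (rather than, say, $|\Phi^{(k)}(E)| \cdot |\varphi(E)|$) is the crux of the argument and will require a sharp Hölder/rescaling trade-off between the different powers of $Q$.
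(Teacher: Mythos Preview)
Your first two steps align with the paper's approach and your identification of the obstacle in step two is on target; that reduction does go through. The real trouble is step three, where two concrete ingredients are missing.

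First, the paper introduces a free linear parameter $L:\R^k\to\R^{n-k}$ and works with
\[
\tilde\Phi_L(y) := (1,\Phi_{k+1}(y),\ldots,\Phi_{n-1}(y)) - L(\Phi_1(y),\ldots,\Phi_k(y)),
\]
showing that $W^2_y(\tilde\Phi_L,\tilde\varphi)$ is \emph{independent of $L$}. This matters because the completing-the-square decomposition of $|Q(\Phi,1)|^2$ for a generic $Q\in\GL(n,\R)$ yields $|Q_1((1,T_V\Phi)-L\Phi^{(k)})|^2+|Q_2\Phi^{(k)}|^2$ with a nonzero cross-term $L$; your choice $L=0$ reaches only block-diagonal $Q$, and there is no way to rescale away the shear.

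Second, and more seriously, the exponent conversion is not achieved by a ``sharp H\"older/rescaling trade-off.'' The paper instead passes to level sets
\[
D_R := \{\,y: |Q_1\tilde\Phi_L(y)|^2+|Q_2\Phi^{(k)}(y)|^2\le R^2\,\},
\]
applies the codimension-$2$ inequality with the bound $|\Phi^{(k)}(E\cap D_R)|\le\min\{R^k|Q_2|^{-1},|\Phi^{(k)}(E)|\}$, multiplies by $R^{-n(n-k-2)/(n-k)}$, and sums dyadically in $R$. The geometric series peaks at $R^k|Q_2|^{-1}\approx|\Phi^{(k)}(E)|$, and evaluating that peak is exactly what produces the factor $|Q_2|^{-(n-k-2)/(n-k)}|\Phi^{(k)}(E)|^{2/(n-k)}$ and the new denominator exponent $n(n-k-2)/(n-k)$. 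There is no substitute for this summation: starting from a right-hand side of $|\varphi(E)||\Phi^{(k)}(E)|$ you cannot reach $|\varphi(E)||\Phi^{(k)}(E)|^{2/(n-k)}$ by a direct bound or interpolation, since one must simultaneously \emph{strengthen} the left-hand side (larger exponent on the denominator) while lowering the power of $|\Phi^{(k)}(E)|$.
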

\begin{proof}
For fixed $\Phi, \varphi$ and $k$, construct new mappings as follows:
\begin{align*}
 \tilde \Phi_L(y) & := (1, \Phi_{k+1}(y),\ldots,\Phi_{n-1}(y)) - L (\Phi_1(y),\ldots,\Phi_{k}(y)), \\
\tilde \varphi(y) & := (\varphi_1(y),\ldots,\varphi_{d-n+2}, \Phi_1(y),\ldots,\Phi_{k}(y)),
\end{align*}
where $L$ is any linear mapping from $\R^k$ into $\R^n$.
Applying \eqref{weight2pt1} and \eqref{weight2pt2} to $(\tilde \Phi_L,\tilde \varphi)$ gives that ${W}^2_y (\tilde \Phi_L, \tilde \varphi) = {\widetilde{W}}_y^2(\Phi,\varphi)$.
%% \begin{align*}
%%  {W}^2_y (\tilde \Phi_L, \tilde \varphi) & := \det \left[ \begin{array}{ccc} 0 & 0 & \partial_y \varphi  \\
%% 0 & 0 & \partial_y \Phi_{1\ldots,k} \\
%% (\partial_y \varphi)^T & (\partial_y \Phi_{1\ldots,k})^T & \tilde M \end{array}  \right] %\label{weight2kpt1} \\
%% {\tilde M}_{ij} & := \det \left[ \begin{array}{ccc} 0  & \partial_y \varphi \\
%% \frac{\partial^2 \Phi}{\partial y_i \partial y_j} & \partial_y \Phi \end{array} \right], %\label{weight2kpt2}
%% \end{align*}
%%where $\Phi_{1,\ldots,k} := (\Phi_1(y),\ldots,\Phi_{k}(y))$. 
 To see this, note first that for any $M_{ij}$< the row corresponding to the first coordinate of $\tilde \Phi_L$ has only one nonzero entry; this entry appears  
in the second column (corresponding to $\Phi$ in \eqref{weight2pt2}).  Thus we may expand and consequently remove this row and this column.
%containing $\Phi$ in \eqref{weight2pt2} is absent in \eqref{weight2kpt2} since there will be a row of $\tilde M$ with only one nonzero entry due to the presence of the constant first coordinate of $\tilde \Phi_L$ (and that nonzero entry appears in the column in question).
Next,  in each $M_{ij}$, one may assume that the second partials of $\Phi^{(k)}$ vanish by virtue of the first derivatives $\partial_y \Phi^{(k)}$ appearing in \eqref{weight2pt1} (meaning that $y$ derivatives are only taken in directions in which $\Phi^{(k)}$ is constant). 
Altogether, this means that the remaining first derivatives of the term $L \Phi^{(k)}$ may be eliminated by row reduction.  In particular, this is why $W^2_y(\tilde \Phi_L,\tilde \varphi)$ does not depend on $L$.  

Consequently, for any matrix $Q_1 \in \GL(n-k,\R)$, one has 
\begin{equation} |Q_1|^{\frac{n-k-2}{n-k}} \int_{E} \frac{ |{\widetilde{W}}^2_y(\Phi,\varphi)|^{\frac{1}{n-k}} }{|Q_1 \tilde \Phi_L(y)|^{n-k-2}} dy \leq C |\tilde \varphi^{(d-n+2+k)}(E)| \label{fancy1}
\end{equation}
uniformly for some topological constant $C$ (which may be taken to be independent of $L$).  Observe that $|\tilde \varphi^{(d-n+2+k)}(E)| \leq |\varphi^{d-n+2}(E)||\Phi^{(k)}(E)|$.  Now fix some matrix $Q_2 \in \GL(k,\R)$ and let 
\[ D_R := \set{ y \in U}{|Q_1 \tilde \Phi_L(y)|^{2} + |Q_2 \Phi^{(k)} (y)|^2 \leq r^2}. \]
Applying \eqref{fancy1} gives the inequality
\begin{align*}
 \int_E & |{\widetilde{W}}^2_y(\Phi,\varphi)|^{\frac{1}{n-k}} \chi_{D_R}(y) dy \\
& \leq C |Q_1|^{- \frac{n-k-2}{n-k}} |\varphi^{(d-n+2)}(E)| r^{n-k-2} \min \{ r^k |Q_2|^{-1} , |\Phi^{(k)}(E)| \} 
\end{align*}
(where the factor of $r^{n-k-2}$ comes from bounding the denominator and the two alternatives for the minimum arise from the fact that $\Phi^{(k)}(E \cap D_R)$ must be at most comparable to $r^k |Q_2|^{-1}$).  Multiplying both sides by $r^{-\frac{n}{n-k} (n-k-2)}$ and summing dyadically over $r$ gives
\[ \begin{split}  \int_E  \frac{|{\widetilde{W}}^2_y(\Phi, \varphi)|^{\frac{1}{n-k}}}{(|Q_1 \tilde \Phi_L(y)|^{2} + |Q_2 \Phi_{1,\ldots,k}(y)|^2 )^{\frac{1}{2} \frac{n(n-k-2)}{n-k}}}  &  dy \\
 \leq C' (|Q_1| |Q_2|)^{-\frac{n-k-2}{n-k}} & |\Phi^{(k)}(E)|^{\frac{2}{n-k}} |\varphi^{(d-n+2)}(E)| \end{split} \]
by the usual means of breaking the dyadic sum into one half where $r^k |Q_2|^{-1} \leq |\Phi^{(k)}(E)|$ and another half where the reverse inequality holds.  The term at the boundary of these two halves is the largest term of all (and there is exponential decay away from this maximum in both directions), so the value of the sum is comparable to the value of the largest term.  Now for any $Q \in \GL(n,\R)$, it is possible to find $Q_1, Q_2$ and $L$ such that $|Q(\Phi,1)|^2 = |Q_1 \tilde \Phi_L(y)|^{2} + |Q_2 \Phi^{(k)}(y)|^2$ and $|Q| = |Q_1| |Q_2|$ by an argument analogous to the case when $Q_2$ is a constant (which appeared in the proof of \ref{srlemma}).  Thus, the lemma is complete.
\end{proof}

Finally, the following lemma adapts lemma \ref{codim2klemma} to the form required for theorem \ref{geomthm2}:
\begin{lemma}
Fix $j \geq 0$ and $V = \{v_1,\ldots,v_j\}$.
Suppose $\Phi : U \rightarrow \R^{n}$ and $\varphi : U \rightarrow \R^{d-n+j+1}$ are $C^2$ functions.  Fix an integer $m$ between $0$ and $n-j-1$ (inclusive) and consider the weight $\widetilde{W}^2$ given by \label{codim2jklemma}
\begin{align}
 {\overline{W}}^2_y (\Phi, \varphi) & := \det \left[ \begin{array}{ccc} 0 & 0 & \partial_y \varphi^{(d-n+j+1)}  \\
0 & 0 & \partial_y \Phi^{(m)} \\
(\partial_y \varphi^{(d-n+j+1)})^T & (\partial_y \Phi^{(m)})^T & \overline M \end{array}  \right], \label{weight2jkpt1} \\
{\overline M}_{i l} & := \det \left[ \begin{array}{ccccc} 0 & \cdots & 0 & 0  & \partial_y \varphi^{(d-n+j+1)} \\
v_1 & \cdots & v_j & \frac{\partial^2 \Phi}{\partial y_i \partial y_l} & \partial_y \Phi \end{array} \right]. \label{weight2jkpt2}
\end{align}
  Then
\begin{equation}
\begin{split}
  \sup_{Q \in \GL(n-j,\R)} & |Q|^{\frac{n-j-m-1}{n-j-m+1}} \! \int_E  \frac{|\overline{W}^2_y(\Phi,\varphi)|^{\frac{1}{n-j-m+1}} dy }{1+ |Q T_V \Phi(y)|^{\alpha}}   \\ & 
\leq C' |\Phi^{(m)}(E)|^{\frac{2}{n-j-m+1}}  |\varphi^{(d-n+j+1)}(E)| \end{split}
\label{weight2jk}
\end{equation}
for some topological constant $C'$, where $\alpha = \frac{(n-j+1)(n-j-m-1)}{n-j-m+1}$.
\end{lemma}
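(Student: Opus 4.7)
The plan is to adapt the proof of Lemma~\ref{codim2klemma} by incorporating the $T_V$-augmentation device used in the proof of Lemma~\ref{codim1jlemma}. The starting point is an application of Lemma~\ref{codim2lemma} to an auxiliary pair $(\tilde\Phi_L, \tilde\varphi)$ in which both the projection $T_V$ and the cutoff by $\Phi^{(m)}$ are incorporated simultaneously, reducing the problem to a single geometric integral estimate of the sort already established.

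I would introduce
\[ \tilde\Phi_L(y) := (1,\, T_{V'}\Phi(y)) - L\Phi^{(m)}(y) \in \R^{n-j-m+1}, \qquad \tilde\varphi(y) := (\varphi^{(d-n+j+1)}(y),\, \Phi^{(m)}(y)) \in \R^{d-n+j+1+m}, \]
where $V' := V \cup \{e_1,\ldots,e_m\}$ (assumed without loss of generality to be linearly independent, so that $T_{V'}:\R^n \to \R^{n-j-m}$ is well-defined) and $L:\R^m \to \R^{n-j-m+1}$ is an auxiliary linear map. Applying Lemma~\ref{codim2lemma} to $(\tilde\Phi_L, \tilde\varphi)$ with its ``$n$'' set to $n-j-m+1$ gives $\tilde d - \tilde n + 2 = d-n+j+1+m$, matching the number of components of $\tilde\varphi$ exactly; the resulting inequality has right-hand side bounded by $|\varphi^{(d-n+j+1)}(E)| \cdot |\Phi^{(m)}(E)|$ and denominator $|\tilde Q\tilde\Phi_L|^{n-j-m-1}$.

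The crux of the argument is to show that $W^2_y(\tilde\Phi_L, \tilde\varphi) = \pm\overline W^2_y(\Phi, \varphi)$, independent of $L$. Following the pattern of the proof of Lemma~\ref{codim2klemma}, the ``$1$'' in the first component of $\tilde\Phi_L$ combines with row reduction (using the $\partial_y\Phi^{(m)}$ rows arising from the $\Phi^{(m)}$-part of $\tilde\varphi$) to eliminate both the distinguished ``$+1$'' column and the $L$-dependence. The remaining inner minors admit a Laplace expansion whose bottom subdeterminants of the form $|T_{V'}z_1 \cdots T_{V'}z_{n-j-m}|$ equal (by the defining identity \eqref{mapping}) $|v_1\cdots v_j\, e_1\cdots e_m\, z_1\cdots z_{n-j-m}|$; expanding the latter along the $e$-columns produces precisely the bottom subdeterminants $|v_1\cdots v_j\, z_1'\cdots z_{n-j}'|$ that appear in the Laplace expansion of $\overline M_{il}$ along its top $(d-n+j+1)$ rows, with the ``missing'' $m$ columns of $z'$-type supplied by the $\partial_y\Phi^{(m)}$ outer rows of $\overline W^2$. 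The two outer block structures then align.

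Finally I would run the dyadic-decomposition argument from the end of the proof of Lemma~\ref{codim2klemma}, splitting over the relative sizes of $|Q_1 T_V\Phi|$ and $|Q_2 \Phi^{(m)}|$, to convert the denominator $|\tilde Q\tilde\Phi_L|^{n-j-m-1}$ into $(1 + |QT_V\Phi|^\alpha)$ with $\alpha = (n-j+1)(n-j-m-1)/(n-j-m+1)$, while simultaneously producing the $|\Phi^{(m)}(E)|^{2/(n-j-m+1)}$ factor on the right-hand side. The main obstacle is the weight identification in the preceding paragraph: coordinating the $T_{V'}$-reduction (which absorbs $V \cup \{e_1,\ldots,e_m\}$ in a single stroke) with the $\Phi^{(m)}$-reabsorption (which re-introduces the $e_i$-directions into the outer structure, so that the net effect involves only $V$) and with the $L$-independent elimination is algebraically intricate and requires careful bookkeeping of signs and Laplace expansions throughout.
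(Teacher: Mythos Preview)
The paper's proof is a single sentence: apply Lemma~\ref{codim2klemma} directly to $\tilde\Phi := T_V\Phi$ (which maps $U$ into $\R^{n-j}$, so that the ``$n$'' of Lemma~\ref{codim2klemma} becomes $n-j+1$) and rename $k$ as $m$. The defining identity \eqref{mapping} for $T_V$ then converts each inner determinant $\tilde M_{il}$ of Lemma~\ref{codim2klemma} (an $(n-j)\times(n-j)$ block built from columns $T_V z$) into the corresponding $\overline M_{il}$ (an $n\times n$ block built from columns $v_1,\ldots,v_j,z$) in one stroke; the passage from $|Q(T_V\Phi,1)|$ to $1+|QT_V\Phi|$ is the same trivial restriction already used in Lemma~\ref{codim1jlemma}.

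Your route is in the right spirit but substantially overcomplicates matters, and the weight identification as you sketch it is shaky. By folding the projection and the $\Phi^{(m)}$-cutoff into a single $T_{V'}$ with $V' = V\cup\{e_1,\ldots,e_m\}$ and going all the way back to Lemma~\ref{codim2lemma}, you lose the clean separation that makes the paper's argument immediate: your inner minors already contain the $\partial_y\Phi^{(m)}$ rows (since $\Phi^{(m)}$ is part of your $\tilde\varphi$), whereas $\overline M_{il}$ has no such rows. The ``reassembly'' you describe---the missing $m$ columns being supplied by the outer $\partial_y\Phi^{(m)}$ rows---amounts to a nested-determinant identity that is not obviously true and that you yourself flag as the main obstacle. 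None of this is needed: because $T_V$ is linear, Lemma~\ref{codim2klemma} applies to $T_V\Phi$ as a black box, and you should not reopen its proof.
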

\begin{proof}
This is an immediate consequence of lemma \ref{codim2klemma} applied to $\tilde \Phi := T_V \Phi$ and renaming $k$ as $m$.
\end{proof}

\subsection{Codimension 3}

This section is concluded with a computation of the codimension 3 weight.  The construction employed by theorem \ref{gentheorem} requires the designation of a single special direction (indicated by the first coordinate direction) to complete the rescaling argument.  In the passage from codimension 1 to codimension 2, the choice of direction was irrelevant; however, in the codimension 3 case, the choice of a special direction does not entirely cancel out; as a result, the codimension 3 weight will depend on a single, arbitrary (but fixed) linear functional $\ell$:
\begin{lemma}
Fix $U \subset \R^d$ open, $\Phi : U \rightarrow \R^{n}$ and $\varphi : U \rightarrow \R^{d-n+3}$ smooth (suppose also that $n > 2$).  Let $\ell$ be any fixed linear functional on $\R^n$.  Consider the weight $W^3_{y}$ given by
\[ W^3_{y}(\Phi,\varphi) := \det \left[ \begin{array}{cc} 0 & \partial_y \varphi^{(d-n+3)} \\ (\partial_y \varphi^{(d-n+3)})^T & N \end{array} \right], \]
\[
\begin{split} N_{ij} := \det & \left( \left[ \begin{array}{cc} 0 & \partial_y \varphi^{(d-n+2)} \\ 0 & \ang{ \partial^2_{ij} D \Phi, \partial_y \Phi} \\ (\partial_y \varphi^{(d-n+3)})^T & \ang{D \Phi, \partial^2_y \Phi} \end{array} \right] \right.  \\
&  -  \frac{\ang{D \Phi,\partial^2_{ij} \Phi} }{\ell(\Phi)}  \left. \left[ \begin{array}{cc} 0 & \partial_y \varphi^{(d-n+2)} \\ 0 & \partial_y \ell(\Phi) \\ (\partial_y \varphi^{(d-n+3)})^T & \ang{D \Phi, \partial^2_y \Phi} \end{array} \right] \right),
\end{split}
\]
where $W_y^3(\Phi,\varphi) := 0$ at the point $y$ if $\ell(\Phi(y)) = 0$.
\begin{equation}
\begin{split} \sup_{Q \in \GL(n,\R)} |Q|^{\frac{n-3}{n-1}} \int_{E} & \frac{|W_y^3(\Phi,\varphi)|^{\frac{1}{(n-1)(n-2)}}}{|Q \Phi(y)|^{\frac{n(n-3)}{n-1}}} dy \\ & \leq C |\varphi^{(d-n+2)}(E)|^{\frac{n-3}{n-2}} |\varphi^{(d-n+3)}(E)|^{\frac{1}{n-2}} 
\end{split} \label{boxest3}
\end{equation}
for some topological constant $C$.
\label{codim3lemma}
\end{lemma}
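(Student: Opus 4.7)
The plan is to obtain Lemma \ref{codim3lemma} by feeding Lemma \ref{codim2lemma} into the inductive machinery of Theorem \ref{gentheorem}. Reading the exponents of \eqref{boxest2} as the hypothesis \eqref{indhyp1} with target dimension $n-1$ (so that the conclusion produces estimates for $\Phi : U \to \R^n$ as claimed), one has $\alpha = n-3$ and $\beta = 1/(n-1)$. A direct substitution into the conclusion of Theorem \ref{gentheorem} yields exactly the exponents appearing in \eqref{boxest3}: $|Q|^{(n-3)/(n-1)}$ outside the integral, $|W^3|^{1/((n-1)(n-2))}$ in the numerator, and $|Q\Phi|^{n(n-3)/(n-1)}$ in the denominator. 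Likewise, the multi-index $\gamma$ for codimension $2$ has all its weight in the coordinate labeled $d-n+2$, and the operation described at the end of Theorem \ref{gentheorem} (take the convex combination with $\delta_{d-n+2}$ and delete the first entry) produces exactly the multi-index $(\tfrac{n-3}{n-2},\tfrac{1}{n-2})$ supported in the last two coordinates, which matches the right-hand side of \eqref{boxest3}. Thus all that remains is to identify the weight $W^3$ manufactured by \eqref{newfunc} with the formula given in the statement.

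The crux is therefore to compute
\[
W^2_{\lambda,y}\bigl(\lambda^{n-2} D_-\Phi,\;\tilde\varphi\bigr)\Big|_{\lambda=1/\Phi_1(y)}
\]
(with $\tilde\varphi = (\lambda\Phi_1 - 1,\varphi_1,\ldots)$ and $D_-\Phi$ defined as in \eqref{dminus}) and to match it against the claimed $W^3$. The calculation proceeds in the spirit of the proof of Lemma \ref{codim2lemma}: first one pulls out the scaling factor $\lambda^{n-2}$ by the homogeneity of $W^2$ in its first argument; next, one expands the block determinants defining $W^2$ along the $\lambda$-column. Only two sources of non-zero entries survive this expansion: differentiation of $\tilde\varphi_1 = \lambda\Phi_1 - 1$ in the $\lambda$-direction produces a factor of $\Phi_1$, and the mixed $\partial_\lambda\partial_{y_j}$ second derivatives of $\lambda^{n-2} D_-\Phi$ produce entries proportional to $\partial_{y_j} D_-\Phi$. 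From this point on, the row reductions are identical to those carried out in the codimension $2$ calculation, repeatedly invoking the identities $\ang{D\Phi,\Phi}=0$, $\ang{YD\Phi,\Phi}=0$ for any $Y$ annihilating $\varphi$, and $\ang{Y_i D\Phi, Y_j \Phi} = -\ang{D\Phi, Y_i Y_j\Phi}$, and organizing them via Lemma \eqref{detred}.

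The qualitatively new phenomenon, absent in codimension $2$, is that the distinguished coordinate $\Phi_1$ cannot be killed by a row reduction against the $\partial_y\varphi$ rows, since $\partial_y\Phi_1$ is not in their span. What remains after reduction is precisely the correction term $-\ang{D\Phi,\partial^2_{ij}\Phi}/\Phi_1 \cdot \partial_y\Phi_1$ in each entry $N_{ij}$, with the factor $1/\Phi_1$ arising from the evaluation at $\lambda = 1/\Phi_1(y)$ prescribed by \eqref{newfunc}. A verification that the remaining powers of $\lambda$ and $\Phi_1$ cancel exactly against the prefactor $|\Phi_1|^{((n^2-1)\alpha/(n(\alpha+1))-1)}$ in \eqref{newfunc} then yields a weight independent of $\lambda$ and of the same shape as $W^3$ with $\ell = dx_1$. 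Finally, the linear functional $\ell$ in the statement is arbitrary because of the equivariance of $D\Phi$ and $W^2$ under $\GL(n,\R)$: given any $\ell\neq 0$, choose $Q\in\GL(n,\R)$ sending $\ell$ to $dx_1$, apply the estimate to $Q\Phi$, and absorb $Q$ into the supremum on the left-hand side.

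The main obstacle is the careful bookkeeping of the row and column reductions: several determinantal identities must be chained together, with the factors of $\lambda$ and $\Phi_1$ tracked through each step, to recognize the output of \eqref{newfunc} as the specific block structure displayed in the definition of $W^3$. Once this identification is carried out, the inequality \eqref{boxest3} is an immediate consequence of Theorem \ref{gentheorem}, with the topological constant being controlled by the topological constants of Lemma \ref{codim2lemma} and the inductive step.
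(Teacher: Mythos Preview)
Your proposal is correct and follows essentially the same approach as the paper: apply Theorem \ref{gentheorem} with Lemma \ref{codim2lemma} as input, verify that the exponents and the multi-index $\gamma''$ come out right, and then identify the weight produced by \eqref{newfunc} with the stated $W^3$ via the determinantal identities from the codimension-2 computation. One small caution: the phrase ``by the homogeneity of $W^2$ in its first argument'' and the claim that the mixed $\partial_\lambda\partial_{y_j}$ terms ``survive'' oversimplify what actually happens---the paper extracts the $\lambda$-powers not by a homogeneity property of $W^2$ but by first simplifying each inner $M_{ij}$ via column reduction (showing that the $\lambda$-derivatives of $\lambda^{n-1}D_-\Phi$ land in the span of the remaining columns), after which the $\lambda$-row and column of the reduced outer matrix vanish identically; the rest of your outline (the \eqref{detred} reduction, the identities for $D\Phi$, the column expansion producing the $\Phi_1^{-1}\ang{D\Phi,\partial^2_{ij}\Phi}$ correction, and the $\GL$-equivariance to pass to general $\ell$) matches the paper exactly.
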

\begin{proof}
As in the codimension 2 case, the form of \eqref{boxest3} is verifiable from \eqref{boxest2} and theorem \ref{boxest2} by simple arithmetic.  The main content of the lemma is the computation which explicitly identifies $W^3$.  Regarding the linear functional $\ell$, one may assume without loss of generality that it is the evaluation of the first coordinate of $\Phi$ (since all the expressions are invariant under rotations).
Computing $M_{ij}$ with the pair $(\lambda^{n-1} D_{-} \Phi, \tilde \varphi)$ gives
\begin{align*}
 M_{ij} & = \det \left[ \begin{array}{cccc} 0 & 0 & 0 & \partial_y \varphi^{(d-n+2)} \\
0 & 0 & \Phi_1 & \lambda \partial_y \Phi_1 \\
\partial^2_{ij} (\lambda^{n-1} D_{-} \Phi) & \lambda^{n-1} D_{-} \Phi & \partial_\lambda (\lambda^{n-1} D_{-} \Phi) & \partial_y ( \lambda^{n-1} D_{-} \Phi) \end{array} \right]  \\
& = \pm \lambda^{n(n-1)} \Phi_1  \det \left[ \begin{array}{ccc} 0 & 0 & \partial_y \varphi^{(d-n+2)} \\
\partial^2_{ij} D_{-} \Phi & D_{-} \Phi & \partial_y  D_{-} \Phi \end{array} \right]
\end{align*}
(for some fixed choice of sign) by row reduction.  To see this, observe that the derivatives $\partial_i$ and $\partial_j$, which are linear combinations of the $\frac{\partial}{\partial y_i}$'s and $\frac{\partial}{\partial \lambda}$ may by the usual choice of coordinates, be assumed to annihilate $\tilde \varphi^{(d-n+3)}$.  Thus $\partial^2_{ij} (\lambda^{n-1} D_{-} \Phi) = \lambda^{n-1} \partial^2_{ij} (D_{-} \Phi) + \partial_{i} (\lambda^{n-1}) \partial_j D_{-} \Phi + \partial_{j} (\lambda^{n-1} ) \partial_i D_{-} \Phi + D_{-} \Phi \partial^2_{ij} (\lambda^{n-1})$, but in this particular coordinate system the final three vectors will lie in the span of the final columns of the matrix defining $M_{ij}$.  Likewise, $\partial_\lambda (\lambda^{n-1} D_{-} \Phi) = (n-1) \lambda^{n-2} D_{-} \Phi$, which is a multiple of the $\lambda^{n-1} D_{-} \Phi$ appearing in the column immediately to the left.  Thus the factor $\lambda^{n-1}$ clearly pulls out of the final $n$ rows after the determinant is expanded in the third column.

Since each term of $M_{ij}$ has a natural factor of $\lambda^{n(n-1)} \Phi_1$, the final $d+1$ rows of the outermost matrix corresponding to $W^2_{\lambda,y}( \lambda^{n-1} D_{-} \Phi, \tilde \varphi)$ will have factors of $\lambda^{n(n-1)} \Phi_1$ except in the first $d-n+3$ columns.  Thus, exploiting the block form of this matrix gives that $W^2_{\lambda,y}( \lambda^{n-1} D_{-} \Phi, \tilde \varphi)$ must equal (up to sign) 
\begin{align*}
(\lambda^{n(n-1)} \Phi_1^2)^{n-2}  \det \left[ \begin{array}{cc} 0 & \partial_{\lambda,y} \tilde \varphi^{(d-n+3)} \\ 
(\partial_{\lambda,y} \tilde \varphi^{(d-n+3)})^T & N \end{array} \right], \\
N_{ij} := \frac{1}{\Phi_1} \det \left[ \begin{array}{ccc} 0 & 0 & \partial_y \varphi^{(d-n+2)} \\
\partial^2_{ij} D_{-} \Phi & D_{-} \Phi & \partial_y  D_{-} \Phi \end{array} \right].
\end{align*}
Changing the coordinates of the outermost matrix back to the standard $\frac{\partial}{\partial \lambda}$ and $\frac{\partial}{\partial y_i}$ directions, it is clear that the row and column of $N$ corresponding to $\frac{\partial}{\partial \lambda}$ will vanish (because $D_{-} \Phi$ does not depend on $\lambda$).  Thus there is a row and a column of the outermost matrix, each of which has only one nonzero entry, namely $\Phi_1$ (which arises as $\frac{\partial}{\partial \lambda} (\lambda \Phi_1 - 1)$). Expanding in this row and this column give $W^2_{\lambda,y}( \lambda^{n-1} D_{-} \Phi, \tilde \varphi)$ up to sign
\begin{align*}
(\lambda^{n(n-1)} \Phi_1^2)^{n-2} \Phi_1^2 \det \left[ \begin{array}{cc} 0 & \partial_y \varphi^{(d-n+2)} \\ (\partial_y \varphi^{(d-n+2)})^T & N \end{array} \right], 
\end{align*}
where the indices of $N$ are now understood to refer to $y$-differentiation only.

Regarding the entries $N_{ij}$, they may be expressed via \eqref{detred} as
\[ \frac{1}{\Phi_1} \det \left[ \begin{array}{ccc}
0 & 0 & \partial_y \varphi^{(d-n+1)} \\
0 & \Phi_1 & \partial_y \Phi_1 \\
0 & \ang{ \partial^2_{ij} D_{-} \Phi, \hat \Phi} & \ang{\partial^2_{ij} D_{-} \Phi, \partial_y \hat \Phi} \\
(\frac{\partial}{\partial y_1} \varphi^{(d-n+2)})^T & \ang{\frac{\partial}{\partial y_1}  D_{-} \Phi, \hat \Phi} & \ang{\frac{\partial}{\partial y_1} D_{-} \Phi, \partial_y \hat \Phi} \\
\vdots & \vdots & \vdots \\
(\frac{\partial}{\partial y_d} \varphi^{(d-n+2)})^T & \ang{\frac{\partial}{\partial y_d}  D_{-} \Phi, \hat \Phi} & \ang{\frac{\partial}{\partial y_d} D_{-} \Phi, \partial_y \hat \Phi}
\end{array} \right]. \]
As in the codimension $2$ case, the second row may be added to subsequent rows to conclude that this determinant also equals
\[ \frac{1}{\Phi_1} \det \left[ \begin{array}{ccc}
0 & 0 & \partial_y \varphi^{(d-n+1)} \\
0 & \Phi_1 & \partial_y \Phi_1 \\
0 & \ang{ \partial^2_{ij} D \Phi,  \Phi} & \ang{\partial^2_{ij} D \Phi, \partial_y \Phi} \\
(\frac{\partial}{\partial y_1} \varphi^{(d-n+2)})^T & \ang{\frac{\partial}{\partial y_1}  D \Phi,  \Phi} & \ang{\frac{\partial}{\partial y_1} D \Phi, \partial_y  \Phi} \\
\vdots & \vdots & \vdots \\
(\frac{\partial}{\partial y_d} \varphi^{(d-n+2)})^T & \ang{\frac{\partial}{\partial y_d}  D \Phi,  \Phi} & \ang{\frac{\partial}{\partial y_d} D \Phi, \partial_y  \Phi}
\end{array} \right]. \]
As before, the vector with entries $\ang{\frac{\partial}{\partial y_i} D \Phi,\Phi}$ lies in the span of the gradients of $\varphi_1,\ldots,\varphi_{d-n+1}$, so row reduction admits the simplification
\[ N_{ij} = \frac{1}{\Phi_1} \det \left[ \begin{array}{ccc}
0 & 0 & \partial_y \varphi^{(d-n+1)} \\
0 & \Phi_1 & \partial_y \Phi_1 \\
0 & \ang{ \partial^2_{ij} D \Phi,  \Phi} & \ang{\partial^2_{ij} D \Phi, \partial_y \Phi} \\
(\frac{\partial}{\partial y_1} \varphi^{(d-n+2)})^T & 0 & \ang{\frac{\partial}{\partial y_1} D \Phi, \partial_y  \Phi} \\
\vdots & \vdots & \vdots \\
(\frac{\partial}{\partial y_d} \varphi^{(d-n+2)})^T & 0 & \ang{\frac{\partial}{\partial y_d} D \Phi, \partial_y  \Phi}
\end{array} \right]. \]
Expanding in the column containing $\Phi_1$ and $\ang{ \partial^2_{ij} D \Phi,  \Phi}$ expresses $N_{ij}$ exactly as was done in the statement of the lemma (up to a fixed sign which does not depend on $i,j$--recall also that the statement of the lemma applies when $\Phi$ maps into $\R^n$ but the present calculation concerns $\Phi$ mapping into $\R^{n+1}$).  Thus we have the weight $W'_y$ from theorem \ref{gentheorem} must equal
\[ |W'_y(\Phi,\varphi)| = |\Phi_1|^{(n^2-1)(n-2)-n(n-1)} (\Phi^{-n(n-1)} \Phi_1^2)^{n-2} \Phi_1^2 |W_y^3(\Phi,\varphi)|, \]
which is simply equal to $|W^3_y(\Phi,\varphi)|$.
\end{proof}

\section{More Applications}
\label{finalsec}

Theorems \ref{theorem1} and \ref{theorem2} now follow trivially from theorem \ref{geomthm} and lemmas \ref{codim1lemma} and \ref{codim2lemma}.  For simplicity, let the weight $\ang{D_1 \Phi,\Phi}$ be denoted $W^1_y(\Phi,\varphi)$.  Consider the variable $x$ to be a parameter, and apply lemmas \ref{codim1lemma} and \ref{codim2lemma} with $\Phi := \partial_x \rho$ and $\varphi$ augmented by $\rho$, i.e., apply the lemmas to the pair $\Phi,(\rho,\varphi)$.  The following inequalities must hold uniformly for some topological constant $C$:
\begin{align*}
 \int_E  |W^1(\partial_x & \rho, (\rho,\varphi)) |^{\frac{1}{d_l + 1}} |f(x) g(y)| dx dy \\
  & \leq  C |\rho(E)|_L^{\frac{d_l}{d_l+1}} |\rho(E)|_R^{\frac{1}{d_l+1}} |\varphi^{(d_r-d_l)}(E)|_R^{\frac{1}{d_l+1}} ||f||_{\frac{d_l + 1}{d_l}} ||g||_{\frac{d_l+1}{d_l}}, \\
 \int_E |W^2(\partial_x & \rho, (\rho,\varphi)) |^{\frac{1}{d_l(d_l -1)}} |f(x) g(y)| dx dy \\
  & \leq  C |\rho(E)|_L^{\frac{d_l-2}{d_l-1}} |\rho(E)|_R^{\frac{1}{d_l-1}} |\varphi^{(d_r-d_l+1)}(E)|_R^{\frac{1}{d_l-1}} ||f||_{\frac{d_l(d_l-1)}{d_l^2 - 2d_l + 2}} ||g||_{\frac{d_l-1}{d_l-2}}. 
\end{align*}
These inequalities are, in fact, sharper forms of the conclusions of theorems \ref{theorem1} and \ref{theorem2} (subject to the elementary verification that the weights \eqref{radonrotcurv} and \eqref{radonrotcurv2} agree with the weights given above).

Likewise, theorem \ref{geomthm2} may be applied with lemmas \ref{codim1jlemma} and \ref{codim2jklemma}.  As before, let $\Phi := \partial_x \rho$ and augment $\varphi$ so that it becomes $(\rho,\varphi)$.  For fixed $x$ and $y_1$, let $v_1,\ldots,v_{k+1}$ be given by $\partial_x \varphi_1(x,y_1),\ldots,\partial_x \varphi_{k}(x,y_1)$ and $\partial_x \rho(x,y_1)$ as specified by theorem \ref{geomthm2} and consider the weight
\begin{equation} w(x,y) := \inf_{y_1 \in U_x} |\widetilde{W}^1_y(\partial_x \rho, (\rho,\varphi))| \label{box1kest} \end{equation}
where $\widetilde{W}^1_y$ is as specified by lemma \ref{codim1jlemma} (note in this case that $j=k+1$).  It follows that
\begin{equation}
\begin{split}
 \int_E  |w(x,y)& |^{\frac{1}{d_l -k+ 1}}  |f(x) g(y)| dx dy \leq C |\rho(E)|_L^{\frac{d_l-k}{d_l-k+1}} |\rho(E)|_R^{\frac{1}{d_l-k+1}}\\
   \times  & \left(|\varphi^{(k)}(E)|_L |\varphi^{(d_r-d_l+k)}(E)|_R \right)^{\frac{1}{d_l-k+1}} ||f||_{\frac{d_l -k + 1}{d_l-k}} ||g||_{\frac{d_l-k+1}{d_l-k}}. 
\end{split} \label{cuccagna}
\end{equation}
for some topological constant $C$.
Finally, note that for the weight
\begin{equation} \overline{w}(x,y) := \inf_{y_1 \in U_x} |\overline{W}^2_y(\partial_x \rho, (\rho,\varphi))| \label{box2kest} \end{equation}
with $\overline{W}^2_y$ being given by lemma \ref{codim2jklemma} (and again taking $j = k+1$) yields the estimate
\begin{equation}
\begin{split}
 \int_E  |\overline{w}&(x,y) |^{\alpha}  |f(x) g(y)| dx dy \leq C |\rho(E)|_L^{\frac{\overline{s}}{\overline{s}+1}} |\rho(E)|_R^{\frac{1}{\overline{s}+1}} |\partial_x \rho^{(m)}(E)|_R^{2 \alpha} \\
   \times  & \left(|\varphi^{(k)}(E)|_L^{\frac{\overline{s}}{d_{\mathrm{eff}}}} |\varphi^{(d_r-d_{\mathrm{eff}}+1)}(E)|_R \right)^{\frac{1}{\overline{s}+1}} ||f||_{\frac{d_{\mathrm{eff}}(\overline{s}+1)}{d_{\mathrm{eff}}(\overline{s}+1) - \overline{s}}} ||g||_{\frac{\overline{s}+1}{\overline{s}}}
\end{split}\label{gsfull}
\end{equation}
where
\[ \alpha := \frac{1}{(d_{\mathrm{eff}} - m ) (\overline{s}+ 1)}, \ \  \overline{s} :=  \frac{(d_{\mathrm{eff}} - m - 2)d_{\mathrm{eff}}}{d_{\mathrm{eff}} - m} = d_{\mathrm{eff}}-2 - \frac{2m}{d_{\mathrm{eff}} - m}, \]
 and $d_{\mathrm{eff}} := d_l -k$.

To conclude, write $x = (x',x_{d_l})$ for $x' \in \R^{d_l-1}$ and suppose that the phase function $\rho$ is of the form
\[ \rho(x,y) = - x_{d_l} + s(x',y) \]
and that the functions comprising $\varphi$ do not depend on $x_{d_l}$ at all.  Suppose as well that $\varphi_i(x,y) = x_i + y_i$ for $i=1,\ldots,k$.  Under these circumstances, substantial simplifications of the weights \eqref{box1kest} and \eqref{box2kest} occur.  In this case, if one takes $x'' := (x_{k+1},\ldots,x_{d_l-1})$, $y'' :=(y_{k+1},\ldots,y_{d_r})$, and $\varphi''(x',y) := (\varphi_{k+1}(x',y),\varphi_{k+2}(x',y),\ldots)$, then $|w(x,y)|$ simply equals the Phong-Stein rotational curvature of $\rho$ when $x^{(k)}$ and $y^{(k)}$ are regarded as constants%%  the restricted incidence relation corresponding to $x'', y''$ and $\rho$ being held constant
, i.e., $|w(x,y)| = |W^{1}_{x'',y''}|$.  Thus \eqref{cuccagna} has the following consequence in the spirit of Cuccagna \cite{cuccagna1996}, who proved local $L^p$-Sobolev estimates for FIOs under similar circumstances:
\begin{theorem}
Suppose $\rho$ is real analytic and $d_r = d_l$. Suppose that at some point $(x,y) \in U$, one has $\partial_x \rho(x,y) \neq 0$ %, $\partial_y \rho(x,y) \neq 0$, 
and the rank of the rotational curvature matrix \eqref{radonrotcurv} is at least $r$.  Then the Radon-like operator $R^0$ given by \eqref{radondef} maps $L^{r/(r-1)}_{comp}$ to $L^r_{loc}$ sufficiently near $(x,y)$.
\end{theorem}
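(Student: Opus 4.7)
The plan is to apply estimate \eqref{cuccagna} with a carefully chosen value of $k$ and with $\varphi$ consisting of translation-invariant coordinate functions, then convert the resulting weighted sublevel estimate into a Radon-like operator bound via the coarea argument from the introduction. First, near $(x_0,y_0)$, since $\partial_x \rho(x_0,y_0) \neq 0$, an orthogonal change of variables in $x$ followed by the implicit function theorem lets us assume on a neighborhood that $\rho(x,y) = -x_d + s(x',y)$ with $x' = (x_1,\ldots,x_{d-1})$. I set $k := d - r + 1$ and $\varphi_i(x,y) := x_i + y_i$ for $i=1,\ldots,k$, so that the hypotheses of the remark immediately preceding the theorem are in force.

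By that remark, the weight $w(x,y) := \inf_{y_1 \in U_x} |\widetilde{W}^1_y(\partial_x \rho,(\rho,\varphi))|$ appearing in \eqref{box1kest} reduces to the restricted Phong-Stein rotational curvature $|W^1_{x'',y''}|$ of $\rho$ with $x^{(k)}$ and $y^{(k)}$ regarded as constants, where $x'' = (x_{k+1},\ldots,x_{d-1})$ and $y'' = (y_{k+1},\ldots,y_d)$. After expanding the full rotational curvature determinant along the $x_d$-column (whose only nonzero entry is $-1$), this $|W^1_{x'',y''}|$ is identified with a specific bordered $r \times r$ minor of the matrix \eqref{radonrotcurv}. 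The rank hypothesis guarantees that some $r \times r$ minor of that matrix is nonzero at $(x_0,y_0)$, and a linear algebra argument, permuting the $y$-variables and the $x_1,\ldots,x_{d-1}$ coordinates, allows us to arrange that the particular bordered minor corresponding to $W^1_{x'',y''}$ is nonvanishing at $(x_0,y_0)$. Continuity, sharpened by real analyticity, then produces a neighborhood $U$ of $(x_0,y_0)$ and a $\beta > 0$ with $|w(x,y)| \geq \beta$ on $U$; Gabrielov's theorem, as described in the introduction, ensures that the topological constants in \eqref{cuccagna} are finite and uniform on $U$.

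With these preparations, I apply \eqref{cuccagna}. Since $d_l - k = r - 1$ and $d_l - k + 1 = r$, both $f$ and $g$ appear in $L^{r/(r-1)}$ and the weight enters with power $1/r$. Using $|w| \geq \beta$ to remove the weight and bounding the widths $|\varphi^{(k)}(E)|_L$, $|\varphi^{(k)}(E)|_R$ by a constant depending only on $U$ (which is possible because $\varphi_i = x_i + y_i$ is translation-invariant and $E \subset U$ is bounded), the specialization $E := \{(x,y) \in U : |\rho(x,y)| \leq \epsilon\}$ yields $|\rho(E)|_L, |\rho(E)|_R \leq 2\epsilon$ and the $\epsilon$-powers cancel after division by $2\epsilon$:
\[ \frac{1}{2\epsilon} \int_{|\rho| \leq \epsilon} \chi_U(x,y) |f(x) g(y)| \, dx \, dy \leq C \|f\|_{r/(r-1)} \|g\|_{r/(r-1)}. \]
Taking $\liminf$ as $\epsilon \to 0^+$ and invoking the coarea inequality from the introduction converts this to $\int f(x) R^0 g(x) \, dx \leq C \|f\|_{r/(r-1)} \|g\|_{r/(r-1)}$ for $f,g$ supported near $(x_0,y_0)$. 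Duality between $L^{r/(r-1)}$ and $L^r$ then gives the claimed mapping $L^{r/(r-1)}_{\mathrm{comp}} \to L^r_{\mathrm{loc}}$ locally near $(x_0,y_0)$.

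The main obstacle is the linear algebra step in the second paragraph: arranging that the rank-$\geq r$ condition on the full $(d+1) \times (d+1)$ rotational curvature matrix, after linear coordinate changes respecting the form $\rho = -x_d + s$, becomes nonvanishing of the specific bordered $r \times r$ minor that corresponds to $W^1_{x'',y''}$. The assumption $\partial_x \rho \neq 0$ is precisely what makes this possible, since it guarantees the $x_d$-column plays the distinguished role of a pivot for the border expansion, so that a generic nonzero $r \times r$ minor can be moved into the required bordered location by permutations of the remaining coordinates without destroying the reduction $\rho = -x_d + s$.
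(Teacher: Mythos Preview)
Your proposal is correct and takes essentially the same approach as the paper. The paper itself does not provide a separate proof of this theorem: it derives inequality \eqref{cuccagna}, specializes to the form $\rho = -x_{d_l} + s(x',y)$ with $\varphi_i(x,y) = x_i + y_i$, observes that in this setting the weight $|w(x,y)|$ of \eqref{box1kest} reduces to the restricted rotational curvature $|W^1_{x'',y''}|$, and then simply states the theorem as a consequence ``in the spirit of Cuccagna.'' Your write-up makes explicit the choice $k = d - r + 1$, the passage to the Radon transform via the coarea argument, and the role of Gabrielov's theorem in controlling the topological constants---all of which are left implicit in the paper.

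One point to watch in your linear-algebra step: you argue that the hypothesis $\partial_x \rho \neq 0$ is what allows the generic nonzero $r \times r$ minor to be moved into the required bordered location. In fact, writing the rotational curvature matrix as $\left(\begin{smallmatrix} 0 & a^T \\ b & C \end{smallmatrix}\right)$ with $a = \partial_x \rho$ and $b = \partial_y \rho$, any bordered minor you can reach by your coordinate changes has first column $\left(\begin{smallmatrix} 0 \\ Ub \end{smallmatrix}\right)$, so if $b = \partial_y \rho(x_0,y_0) = 0$ the restricted determinant $W^1_{x'',y''}$ vanishes identically regardless of $U$. You therefore implicitly also need $\partial_y \rho(x_0,y_0) \neq 0$; this is harmless in practice (it is needed for the Radon transform \eqref{radondef} to be nondegenerate at $(x_0,y_0)$ anyway), but it is worth stating, since the paper's hypothesis only mentions $\partial_x \rho \neq 0$.
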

Likewise, if $\varphi_{k+1}(x',y) = w(x,y)$, then $\overline{w}(x,y)$ measures the Gaussian curvature of the mapping $y'' \mapsto (\partial_{x''} s)$ when restricted to the submanifold where $s$, $w(x',y)$, and the first $m$ components of $\partial_{x''} s$ are constant.  Again, if the image of the submanifold on which $s$ and $w$ are constant has at least $d_{\mathrm{eff}}-m-2$ nonvanishing principal curvatures, then there will be an appropriate coordinate system for which $\overline{w}(x',y) \neq 0$.  Thus the following generalization of theorem \ref{greenleafseeger} is a consequence of \eqref{gsfull}:
%% Suppose $d_l = d_r$.  Let ${\mathcal M} := \set{(x,y) \in U}{\rho(x,y) = 0}$ for some real analytic $\rho$.  Fix a point $(x,y) \in {\mathcal M}$, and let $X$ and $Y$ be affine $d_{\mathrm{eff}}$-dimensional subspaces of $\R^{d}$ containing $x$ and $y$, respectively.  Let ${\mathscr{C}}$ be the canonical relation associated to ${\mathcal M} \cap X \times Y$.
\begin{theorem}
Suppose $\pi_L : M \rightarrow T^* X$ suppose that at $(x,y)$, $\pi_L$ has a Whitney fold singularity and the image of the fold hypersurface under $\pi_L$ (for $x$ held fixed) has at least $\ell$ nonvanishing principal curvatures.  Then the associated Radon transform $R^0$ maps $L^{q,1}_{comp}$ to $L^{p,\infty}_{loc}$ for $q := \frac{d_{\mathrm{eff}}+1}{d_{\mathrm{eff}}} + \frac{1}{d_{\mathrm{eff}}(\ell + 1)}$ %1 + \frac{\ell + 2}{d_{\mathrm{eff}}(\ell + 1)}$ 
and $p := d_{\mathrm{eff}} q$.
\end{theorem}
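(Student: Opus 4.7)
The plan is to mimic the proof of theorem \ref{greenleafseeger}, replacing the two specific weights $W^1_{x,y}$ and $W^2_{x,y}$ by their parametric generalizations $w(x,y)$ of \eqref{box1kest} and $\overline{w}(x,y)$ of \eqref{box2kest}, and using the generalized damped inequalities \eqref{cuccagna} and \eqref{gsfull} in place of \eqref{rad1} and \eqref{rad2}.

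First I would set up the geometric dictionary near the distinguished point $(x,y)$. The Whitney fold hypothesis means the fold hypersurface is smoothly cut out by a single function which (up to a nonvanishing factor) is the Phong-Stein rotational curvature $W^1_{x',y'}$ associated to $\rho$ restricted to the directions where $\pi_L$ has full rank; call this defining function $\varphi_\star$. Taking $k$ equal to the corank of $\partial_x \rho$ at $(x,y)$ along the fold so that $d_{\mathrm{eff}} := d_l - k$ is the effective dimension, and setting $m := d_{\mathrm{eff}} - \ell - 2$, the geometric interpretation of $\overline{W}^2$ recorded after lemma \ref{codim2klemma} says that the $\ell$-nonvanishing-principal-curvatures hypothesis forces $\overline{w}(x,y)$ (built from $\partial_x \rho$, $\rho$ and $\varphi_\star$) to be bounded below by some $\beta > 0$ on a neighborhood.

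Next I would establish the Bourgain-style pointwise splitting: for any $\epsilon > 0$,
\[
\chi_{|\overline{w}(x,y)| \geq \beta} \;\leq\; \bigl| \epsilon^{-1} w(x,y) \bigr|^{\alpha_1} \;+\; \bigl| \beta^{-1} \overline{w}(x,y) \bigr|^{\alpha_2} \chi_{|w(x,y)| \leq \epsilon},
\]
where $\alpha_1 = 1/(d_{\mathrm{eff}} - k + 1)$ and $\alpha_2 = 1/((d_{\mathrm{eff}} - m)(\overline{s}+1))$ are precisely the exponents already attached to $|w|$ and $|\overline{w}|$ in \eqref{cuccagna} and \eqref{gsfull}. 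Multiplying by the Radon kernel and invoking the coarea-formula passage used in the introduction, these two estimates convert into operator bounds for the damped Radon transforms $R^0_w$ and $R^0_{\overline{w}}$. Applied between characteristic functions $\chi_F$ and $\chi_G$ of compact sets, the splitting yields
\[
\int \chi_F(x)\, R^0 \chi_G(x)\,dx \;\leq\; \epsilon^{-\alpha_1}\!\int \chi_F\, R^0_{w} \chi_G \;+\; \beta^{-\alpha_2}\!\int \chi_F\, R^0_{\overline{w}} \chi_G,
\]
and each term is controlled by products of $|F|$ and $|G|$ raised to the exponents dictated by \eqref{cuccagna} and \eqref{gsfull}. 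Optimizing the right-hand side in $\epsilon$ produces a restricted weak-type estimate of the asserted form, which by the standard real interpolation of Lorentz spaces (exactly as in theorem \ref{greenleafseeger}) upgrades to the boundedness $R^0 : L^{q,1}_{\mathrm{comp}} \to L^{p,\infty}_{\mathrm{loc}}$.

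The main obstacle is an algebraic one: checking that substituting the precise values of $\overline{s}$, $m = d_{\mathrm{eff}} - \ell - 2$ and the $L^p$--$L^{p'}$ exponents from \eqref{cuccagna} and \eqref{gsfull} into the above optimization really does collapse to $p = d_{\mathrm{eff}} q$ with $q = (d_{\mathrm{eff}}+1)/d_{\mathrm{eff}} + 1/(d_{\mathrm{eff}}(\ell+1))$. The guiding sanity check is that at the extremal parameters $k = 0$, $\ell = d_{\mathrm{eff}} - 2$ (i.e. $m = 0$) the computation must reproduce exactly the exponents of theorem \ref{greenleafseeger}, and indeed one verifies directly that $(d_{\mathrm{eff}}+1)/d_{\mathrm{eff}} + 1/(d_{\mathrm{eff}}(d_{\mathrm{eff}}-1)) = d_{\mathrm{eff}}/(d_{\mathrm{eff}}-1)$. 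A secondary technical point to dispatch is that the width factors $|\varphi^{(k)}(E)|_L$ and $|\partial_x \rho^{(m)}(E)|_R$ appearing in \eqref{gsfull} stay uniformly bounded, which is handled by localizing to a small neighborhood of $(x,y)$ on which all the relevant functions are Lipschitz; this is what forces the conclusion to be stated in the local form $L^{q,1}_{\mathrm{comp}} \to L^{p,\infty}_{\mathrm{loc}}$.
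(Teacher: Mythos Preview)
Your proposal is correct and follows essentially the same approach as the paper: Bourgain's pointwise splitting combined with \eqref{cuccagna} and \eqref{gsfull}, passage to the Radon transform via the coarea formula, and optimization over $\epsilon$. One small slip: your exponent $\alpha_1$ should be $1/(d_{\mathrm{eff}}+1)$ (i.e., $1/(d_l - k + 1)$), not $1/(d_{\mathrm{eff}} - k + 1)$, matching the exponent on $|w|$ in \eqref{cuccagna} and the paper's own proof.
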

\begin{proof}
The proof follows from the same application of Bourgain's trick that was used in theorem \ref{greenleafseeger}.  In particular,
\[ \chi_{\overline{w} \geq \beta} \leq |\epsilon^{-1} w|^{\frac{1}{d_{\mathrm{eff}}+1}} + |\beta^{-1} \overline{w}|^{\alpha} \chi_{w \leq \epsilon}, \]
so by \eqref{cuccagna} and \eqref{gsfull} (and the passage from sublevel operator to Radon-like transform) it follows that
\[ \int_F R^0 \chi_G(x) dx \leq C_\beta \left( \epsilon^{-\frac{1}{d_{\mathrm{eff}} + 1}} |F|^{\frac{d_{\mathrm{eff}}}{d_{\mathrm{eff}}+1}}|G|^{\frac{d_{\mathrm{eff}}}{d_{\mathrm{eff}}+1}}   +   \epsilon^{\frac{1}{\overline{s}+1}} |F|^{1 - \frac{  \overline{s}}{d_{\mathrm{eff}}(\overline{s}+1)}}|G|^{\frac{\overline{s}}{\overline{s}+1}} \right).\]
Optimizing over the choice of $\epsilon$ gives the desired $L^{q,1} \rightarrow L^{p,\infty}$ result (using, of course, $\ell := d_{\mathrm{eff}} - m - 2$).
\end{proof}

\section{Appendix}
\label{appendix}
Suppose $\rho(x_0,y_0) = \rho_0$ and $\nabla_y \rho(x_0,y_0) \neq 0$.  By the implicit function theorem, there exists a $C^1$ mapping $\Psi : D \rightarrow \R^{d_l} \times \R^{d_r}$ such that $D$ is an open ball centered at $(x_0,\rho_0, 0) \in \R^{d_l} \times \R \times \R^{d_r-1}$, $\Psi$ is $1-1$ on $D$ (with a Jacobian determinant uniformly bounded away from $0$ and $\infty$) and $\rho \circ \Psi(x_0, c, t) = c$.  %Let $\overline{D}$ be the open neighborhood of $(x_0,y_0)$ which is the image of the domain of $\Phi$.  
Let $\pi_R : \R^{d_l} \times \R^{d_r} \rightarrow \R^{d_r}$ be given by $\pi_R(x,y) := y$.
%If $\Phi_R$ is the projection of $\Phi$ onto the factor $\R^{d_r}$ and
The notation $\left| \frac{\partial \pi_R \circ \Psi }{\partial t} (x,c,t) \right|$ is meant to denote the square root of the Gram determinant generated by the vectors $\frac{\partial \pi_R \circ \Psi}{\partial t_i}$ for $i=1,\ldots,d_r-1$, then the integral of interest over $\Sigma_x^{R,c} \cap \Psi(D)$ is given exactly by
\begin{align*} \int_{\Sigma^{R,c}_x \cap \Psi(D)} g(y) & \chi_{E} (x,y) \frac{d {\mathcal H}^{d_r-1}(y)}{|\partial_y \rho(x,y)|} \\
& = \int_{D_{x,c}} g( \pi_R ( \Psi(x,c,t))) \chi_{E}(\Psi(x,c,t))  \frac{\left| \frac{\partial \pi_R \circ \Psi}{\partial t} (x,c,t) \right|}{|\partial_y \rho(\Psi(x,c,t))|} dt,
\end{align*}
where $D_{x,c}$ is the subset of $D$ on which $x$ and $c$ have given, fixed values.
Assuming for the moment that $E$ is open and $g$ is positive and continuous, then the integral on the right-hand side will be a lower semi-continuous function of $c$ since the domain is bounded and open (so $\chi_{D \cap \Psi^{-1}(E)}(x,c,t) \rightarrow \chi_{D \cap \Psi^{-1}(E)}(x,c_0,t)$ as $c_0 \rightarrow c$ at every point for which $\chi_{D \cap \Psi^{-1}(E)}(x,c_0,t)=1$) and all the remaining functions in the integrand are positive, continuous functions of $c$. % except for the characteristic function (which nevertheless converges in $t$ pointwise a.e. as $c \rightarrow c_0$ since the boundary of $D$ is a sphere for any fixed $x,c$).

Now let $\varphi$ be a nonnegative partition of unity on $E \setminus \set{(x,y)}{\partial_y \rho(x,y) = 0}$ adapted to the family of balls above provided by the implicit function theorem.  If $f$ is nonnegative, then the monotone convergence theorem guarantees that
\begin{align*}
 \sum_{i} \int f(x) & \int_{\Sigma^{R,c}_x} g(y) \varphi_i (x,y) \frac{d {\mathcal H}^{d_r-1}(y)}{|\partial_y \rho(x,y)|} dx \\
& = \int f(x) \int_{\Sigma^{R,c}_x} g(y) \chi_E(x,y) \chi_{\partial_y \rho(x,y) \neq 0} \frac{d {\mathcal H}^{d_r-1}(y)}{|\partial_y \rho(x,y)|} dx 
\end{align*}
for any value of $c$.  Thus, for any fixed value of $c$, the value of the right-hand side may be approximated arbitrarily well by a finite sum over the partition (meaning within any prescribed $\epsilon$ if the right-hand side is finite or larger than any fixed $N$ if the right-hand side is infinite).  Since each of the terms is a lower semi-continuous function of $c$, the result is that the right-hand side must be a lower semi-continuous function of $c$.  Consequently
\[ \int f(x) R^0 g(x) dx \leq \liminf_{\epsilon \rightarrow 0^{+}} \frac{1}{2 \epsilon} \int_{-\epsilon}^{\epsilon} \int f(x) R^c g(x) dx dc. \]
On the other hand, the change-of-variables formula dictates that
\begin{align*} 
\int_{-\epsilon}^\epsilon  \int f(x) \int g (\pi_R (\Psi (x,c,t))) & \varphi_i (\Psi(x,c,t)) \frac{\left| \frac{\partial \pi \circ \Psi_R}{\partial t} (x,c,t) \right|}{|\partial_y \rho (\Psi(x,c,t))|} dt dx dc \\
& = \int f(x) g(y) \varphi_i(x,y) \chi_{|\rho(x,y)| \leq \epsilon} dx dy
\end{align*}
since $\chi_{|c| \leq \epsilon} = \chi_{|\rho \circ \Psi| \leq \epsilon}$ and
\[ \left| \frac{\partial \Psi}{\partial(x,c,t)} \right| = \frac{\left| \frac{\partial \pi_R \circ \Psi}{\partial t} (x,c,t) \right|}{|\partial_y \rho( \Psi(x,c,t))|} \]
which is a consequence of the following elementary manipulation of determinants.  By placing the determinant in block form, it clearly must be true that $\left| \frac{\partial \Psi}{\partial(x,c,t)} \right| = \left|\frac{\partial \pi_R \circ \Psi}{\partial(c,t)} \right|$.  Now note that $\ang{\partial_y \rho, \frac{\partial \pi_R \circ \Psi}{\partial c}} = 1$ and $\ang{ \partial_y \rho, \frac{\partial \pi_R \circ \Psi}{\partial t_i}} = 0$, meaning that one may apply an orthogonal transformation to the rows of $\frac{\partial \pi_R \circ \Psi}{\partial(c,t)}$ such that the top row vanishes everywhere except for the column corresponding to differentiation in the $c$ direction, where the entry must have magnitude $|\partial_y \rho|^{-1}$.

Finally, note that the passage from $E$ open to $E$ closed is accomplished by outer regularity of the Lebesgue measure.

\bibliography{mybib}

\end{document}